\documentclass[10pt]{amsart}
\usepackage{egastyle}

\usepackage{amssymb,amsmath,amscd,amsthm,amsfonts,wasysym,mathrsfs,amsxtra,stmaryrd,array,ulem}

\usepackage{graphicx,array,url}
\usepackage[vcentermath]{genyoungtabtikz}
\usepackage{tikz}
\usepackage[section]{placeins}
\usepackage{afterpage}
\usepackage{verbatim}
\usepackage{tikz-cd}

\input xy
\xyoption{all}

\newcommand{\OO}{\mathcal{O}}

\newcommand{\kernel}{\textnormal{ker}\,}
\newcommand{\image}{\textnormal{im}}
\newcommand{\cokernel}{\textnormal{coker}\,}

 %for sheaf
\newcommand{\Hom}{\textnormal{Hom}}
\newcommand{\dimension}{\textnormal{dim}\,}

\newcommand{\Ext}{\textnormal{Ext}}

\newcommand{\Ac}{\mathcal{A}}
\newcommand{\Bc}{\mathcal{B}}

\newcommand{\Fc}{\mathcal{F}}
\newcommand{\Tc}{\mathcal{T}}

\newcommand{\Hc}{\mathcal{H}}

\newcommand{\Coh}{\mathrm{Coh}}

\newcommand{\arinj}{\ar@{^{(}->}}
\newcommand{\arsurj}{\ar@{->>}}
\newcommand{\areq}{\ar@{=}}

\newcommand{\Bl}{\mathcal{B}^l}

\newcommand{\ch}{\mathrm{ch}}

\newcommand{\bsm}{\begin{smallmatrix}}
\newcommand{\esm}{\end{smallmatrix}}

\newcommand{\A}{\mathcal{A}}
\newcommand{\B}{\mathcal{B}}
\newcommand{\T}{\mathcal{T}}
\newcommand{\F}{\mathcal{F}}
\newcommand{\K}{\mathcal{K}}

\newcommand{\cP}{\mathcal{P}}

\newcommand{\cO}{\mathcal{O}}

\newcommand{\cblue}[1]{{\textcolor{black}{#1}}} 

%Correct commands to use for egastyle:
%theorem:     thm
%proposition: prop
%lemma:       lem
%definition:  defn
%remark:      rem
%example:     eg

%%% for theorem repeating %%%

\makeatletter
\newtheorem*{rep@theorem}{\rep@title}
\newcommand{\newreptheorem}[2]{%
\newenvironment{rep#1}[1]{%
 \def\rep@title{#2 \ref{##1}}%
 \begin{rep@theorem}}%
 {\end{rep@theorem}}}
\makeatother

%%New question environment
%
  {\begin{genthm}{Question}{true}{#1}{paragraph}}%
  {\end{genthm}}

  {\begin{genthm}{Question}{true}{#1}{equation}}%
  {\end{genthm}}

\newreptheorem{theorem}{Theorem}

%%%define reallywidehat
\usepackage{scalerel,stackengine}
\stackMath
\newcommand\reallywidehat[1]{%
\savestack{\tmpbox}{\stretchto{%
  \scaleto{%
    \scalerel*[\widthof{\ensuremath{#1}}]{\kern-.6pt\bigwedge\kern-.6pt}%
    {\rule[-\textheight/2]{1ex}{\textheight}}%WIDTH-LIMITED BIG WEDGE
  }{\textheight}%
}{0.5ex}}%
\stackon[1pt]{#1}{\tmpbox}%
}
\parskip 1ex

\begin{document}

\title[]{Weak stability conditions as limits of Bridgeland stability conditions}

\author[Tristan C. Collins]{Tristan C. Collins}
\address{Department of Mathematics \\
Massachusetts Institute of Technology\\
77 Massachusetts Avenue\\
Cambridge MA 02139 \\
USA}
\email{tristanc@mit.edu}

\author[Jason Lo]{Jason Lo}
\address{Department of Mathematics \\
California State University, Northridge\\
18111 Nordhoff Street\\
Northridge CA 91330 \\
USA}
\email{jason.lo@csun.edu}

\author[Yun Shi]{Yun Shi}
\address{Department of Mathematics \\
Brandeis University\\
415 South Street\\
Waltham MA 02453\\
USA}
\email{yunshi@brandeis.edu}

\author[Shing-Tung Yau]{Shing-Tung Yau}
\address{Yau Mathematical Sciences Center\\
Tsinghua University\\
Haidian District, Beijing\\
China}
\email{styau@tsinghua.edu.cn}

%\keywords{--, --}
%\subjclass[2020]{Primary --; Secondary: --, --}

\begin{abstract}
In this paper, we give a definition of weak stability condition on a triangulated category.  The difference between our definition and existing definitions is that we allow objects in the kernel to have non-maximal phases.  We then construct four types of weak stability conditions that naturally occur on Weierstra{\ss} ellitpic surfaces as limites of Bridgeland stability conditions. 
\end{abstract}

\maketitle
\tableofcontents

\section{Introduction}
The notion of weak stability conditions was defined in \cite{piyaratne2015moduli} and has been studied by many authors (e.g.\  see  \cite{piyaratne2015moduli}, \cite{BMT1}, \cite{broomhead2022partial}). Recall that in \cite{StabTC}, a Bridgeland stability condition on the derived category of coherent sheaves of a smooth projective variety can be equivalently defined as a pair $(Z, \A)$, where $Z$ is a group homomorphism from the Grothendieck group of the derived category to the complex numbers, called the central charge, and $\A$ is the heart of a bounded t-structure. Weak stability conditions are defined analogously, the main difference being weak stability conditions allow nonzero objects in the heart $\A$ to lie in the kernel of the central charge $Z$, i.e. there can be objects $0 \neq E\in\A$ such that $Z(E)=0$.

In this paper, we generalize the definition of weak stability conditions as defined in \cite{piyaratne2015moduli}. This is motivated by the observation that weak stability conditions also naturally occur as degenerations - or `limits' - of Bridgeland stability conditions. When the data $(Z, \A)$ of  a Bridgeland stability condition approaches a certain limit, it can happen that  $\kernel(Z)$ begins to have nonzero intersection with $\A$. %The application we have in mind is our following up paper \cite{CLSY}. In which we hope to study the Bridgeland stability of objects by studying the stability of the object at the limiting weak stablity conditions.
%When $\ker(Z)\neq \emptyset$, 
One obvious issue is how to define the phases of the objects in $\ker(Z) \cap \A$. In the original definition of weak stability conditions in \cite{piyaratne2015moduli}, the phase of any object in $\ker(Z) \cap \A$ is always set to  be maximal. This is consistent with  intuition, since $\ker(Z) \cap \A$ of weak stability conditions considered in \cite{piyaratne2015moduli} merely contains  skyscraper sheaves.
But if we consider weak stability conditions that occur as limits of Bridgeland stability conditions in general, it can happen that $\ker(Z)\cap \A$ contains objects not supported in dimension zero. %Viewing weak stability condition as limit of Bridgeland stability conditions, 
In this paper, we propose a natural generalization - we define the phases of the objects in $\ker(Z) \cap \A$ to be the limits of their Bridgeland phases as the data $(Z, \A)$ of a Bridgeland stability condition approaches a limit.  

The first application of this generalized definition is in our followup paper  \cite{CLSY}, where we study stable objects with respect to Bridgeland stability conditions. In some cases, it is easier to show an object is stable with respect to weak stability conditions rather than  Bridgeland stability conditions. Our new definition then allows us to use the stability of an object at a particular weak stability condition - considered as a limit of Bridgeland stability conditions -  to conclude the stability of the object with respect to nearby Bridgeland stability conditions. 
%We encounter weak stability condition natually in our following up paper \cite{CLSY}, where we study stable objects with respect to Bridgeland stability conditions. Our strategy is to study the Bridgeland stability of objects by studying the stability of the object at particular limits, where Bridgeland stability conditions degenerate to weak stablity conditions. 
%For the weak stability conditions we encounter, $\ker(Z)$ has objects which are not supported in dimension zero. This leads to the idea of defining the phases of the objects in $\ker(Z)$ to be the limit of their Bridgeland phases as the data $(Z, \A)$ of Bridgeland stability condtions approaches a limit.    

Besides the above application, we also %think this is a natural generalization of the original definition of weak stability condition. Besides the application in the follow up work, we 
hope that this generalization can give a better understanding of the Bridgeland stability manifold. In particular, we hope that the weak stability conditions defined in this paper help give a better picture of the boundaries of certain open components of the Bridgeland stability manifold. %We leave this direction in future exploration. %[\textcolor{blue}{edit this paragraph}]  
We note that this direction has already been undertaken  in \cite{broomhead2022partial}, where the authors also take an approach towards weak stability conditions using slicings.  Our definition of a weak stability condition, however, does not require the slicing to be locally finite, nor require the existence of a support property.  We do, however, require the weak see-saw property.  The main motivation behind our definition is that we imagine a weak stability condition to be a degeneration of Bridgeland stability conditions and, in our examples, we always define the phases of objects in the kernel to be limits of their phases with respect to the associated Bridgeland stability conditions.  Our  main application in this article is in proving the Bridgeland stability of specific objects, while that of \cite{broomhead2022partial} is in compactifying the Bridgeland stability manifold. 

\subsection{Outline}

In Section \ref{sec:prelim}, we  set up some notation for elliptic surfaces.  In Section \ref{sec:def-wsc}, we recall the definition of very weak stability conditions as defined in \cite{piyaratne2015moduli} and give a generalized definition of weak stability conditions. We also discuss  several  properties of weak stability conditions that are analogous to those of Bridgeland stability conditions, e.g. slicing, Harder-Narasimhan property etc.   In Section \ref{sec:BGi}, we show that a stronger version of  Bogomolov-Gieseker inequality for slope stable sheaves holds for some nef diviors, too.

Then, in Section \ref{sec:fourtypesofweakstab}, we construct four types of weak stability conditions that satisfy our definition. These are the weak stability conditions we encounter in our followup paper, but they are of their own interests. In particular, they include the following types of limits of Bridgeland stability conditions: (i) when the ample class approaches a nef class; (ii) when the coefficient of $\ch_0$ in the central charge $Z$ of a Bridgeland stability condition approaches zero; (iii) when the limits of (i) and (ii) happen at the same time; and (iv) the image of the weak stability condition in (iii) under the relative Fourier Mukai transform. In each of these cases, it is easy to write down what the central charge of the weak stability condition ``should'' be; it is less clear, however, as to how to construct the right heart of t-structure so that both the positivity property and the Harder-Narasimhan property are satisfied.  In addition, in order to construct a weak stability condition using our definition, we need to describe the objects in the intersection of the heart and the kernel of the central charge.  Most of Section \ref{sec:fourtypesofweakstab} are devoted to answering these technical questions.

\subsection{Acknowledgements} JL was partially supported by NSF grant DMS-2100906.  TCC was partially supported by NSF CAREER grant DMS-1944952.  Part of this work was done when YS was a postdoc at CMSA, Harvard, she would like to thank CMSA for the excellent working environment.

\section{Preliminaries}\label{sec:prelim}

\paragraph[Notation] Let $X$ be a smooth projective variety $X$.  We will write $D^b(X)$ to denote $D^b(\Coh (X))$, the bounded derived category of coherent sheaves on $X$. 

In Section \ref{sec:def-wsc}, we will give a definition of weak stability conditions on any triangulated category.  In  later sections, however, we will focus on weak stability conditions on the derived category of coherent sheaves on a Weierstra{\ss} elliptic surface, and so we also review the definition of such surfaces here. 

\paragraph[Weierstra{\ss} elliptic surface] \label{para:def-Wellsurf}
By a Weierstra{\ss} elliptic surface, we mean a flat morphism  $p : X\to Y$ of smooth projective varieties of relative dimension 1, where $X$ is a surface and
\begin{itemize}
    \item the fibers of $p$ are Gorenstein curves of arithmetic genus 1, and are geometrically integral;
    \item $p$ has a section $s : Y \to X$ such that its image $\Theta$ does not intersect any singular point of any singular fiber of $p$.
\end{itemize}
The definition we use here follows that of 
 \cite[Definition 6.10]{FMNT}.  Under our definition, the generic fiber of $p$ is a smooth elliptic curve, and the singular fibers of $p$ are either nodal or cuspidal curves.  We usually write $f$ to denote the class of a fiber for the fibration $p$, and write $e = -\Theta^2$.  Often, we simply refer to $X$ as a Weierstra{\ss} elliptic surface.  Note that when $Y=\mathbb{P}^1$, $X$  is K3 if and only if $e=2$ \cite[2.3]{LLM}.

\paragraph[RDV coordinates for divisors] \label{para:RDVcoord} Suppose $X$ is a Weierstra{\ss} elliptic surface.  Given any divisor $M$ on $X$ of the form $M = a\Theta + bf$ where $a, b \in \mathbb{R}$ and $a \neq 0$, we can find real numbers $R_M \neq 0$ and $D_M$ such that
\begin{equation}\label{eq:RDVcoord-2}
  M = R_M (\Theta + (D_M + e)f).
\end{equation}
We also set
\[
  V_M = \frac{M^2}{2}=R_M^2(D_M+\tfrac{e}{2}).
\]
Note that when $D_M, V_M>0$ (e.g.\ when $M$ is ample), we can write
\[
  R_M = \sqrt{\frac{V_M}{D_M+\tfrac{e}{2}}}.  
\]

The coordinates $R_M, D_M, V_M$ for divisors $M$ are especially suited for computations on elliptic fibrations, and are inspired by symmetries first observed in \cite{LMpc}. For example, we have 
\begin{equation}\label{eq:RDVcoord-1}
  \Theta M=R_MD_M
\end{equation}
and, if $W$ is a divisor  written in the  form  \eqref{eq:RDVcoord-2}, then 
\[
 MW = R_M R_W (\Theta + (D_M+D_W + e)f)
\]
which is reminiscent of multiplication for complex numbers in polar coordinates.

\section{Definition of weak stability condition}\label{sec:def-wsc}

%[define the upper-half plane $\mathbb{H}$ somewhere]

 We begin by recalling the following definition of a very weak pre-stability condition in \cite{piyaratne2015moduli}.  Let us denote the strict upper-half complex plane by $\mathbb{H} = \{ r e^{i\pi\phi} : r \in \mathbb{R}_{>0}, \phi \in (0,1) \}$.
	\begin{defn} (Definition 2.1 in \cite{piyaratne2015moduli})
		\label{Def:WeakPT16}
		A very weak pre-stability condition on a triangulated category $\mathcal{D}$ is a pair $(Z, \A)$, where $\A$ is the heart of a bounded t-structure on $\mathcal{D}$, and $Z: K(\mathcal{D})\rightarrow \mathbb{C}$ is a group homomorphism satisfying the following conditions:
		
		(i) For any $E\in \mathcal{D}$, we have $Z(E)\in \mathbb{H}\cup \mathbb{R}_{\leq 0}$.	
		
		(ii) Let $\rho=-\frac{\Re(Z)}{\Im(Z)}$ be the associated slope function, where we set $\rho(E)=\infty$ if $\Im Z(E)=0$. Then $\rho$ satisfies the Harder-Narasimhan (HN for short) property.
	\end{defn}
	
	In the next section, we will study weak stability conditions whose central charges are limits of central charges associated to Bridgeland stability conditions. Intuitively, we want the phases of the objects with respect to the limit stability conditions to be the limits of the phases with respect to the Bridgeland stability conditions. To achieve this goal, we modify  Definition \ref{Def:WeakPT16}. In particular, we allow the objects in the kernel of the central charge to have phases other than $1$, and define the phases of the objects in the kernel of the central charge to be the limits of the phases with respect to Bridgeland stability conditions.

 %\ccblue{\sout{To make the weak stability condition behave more like a limiting stability condition,} Intuitively, we want the phases of the objects w.r.t. the limiting stability conditions to be the limit of the phases w.r.t. the Bridgeland stability conditions. To fullfil this goal, we} give a generalization of Definition \ref{Def:WeakPT16}. In particular, we allow the objects in the kernel of the central charge to have phases other than $1$, \ccblue{\sout{In the examples we will encounter, we will} and} define the phases of the objects in the kernel of the central charge to be the limit of the phases of Bridgeland stability conditions. \ccblue{\sout{The goal of this formalism} One application of this formalism is \sout{to show that the objects of interest are stable at the limiting Bridgeland stability conditions; this formalism also helps with} to show \sout{that these} the stability of objects \sout{are stable} at the actual Bridgeland stability conditions by showing the stability at the weak stability conditions. }
	
	%From the above computation, we see that for $K\in Ker(Z_{\omega', B'})\cap Coh^{\omega', B'}$, the limit of $\phi_{\sigma'}(K)$ as $\sigma'\rightarrow \sigma$ does not equals to $1$.
	%We make the following modified definition of weak stability condition.
	\begin{defn}
		\label{Def:Weak}
		A weak stability condition on $\mathcal{D}$ is a triple $$\sigma=(Z, \A, \{\phi(K)\}_{K\in \ker(Z)\cap \A}),$$ where $\mathcal{A}$ is the heart of a bounded t-structure on $\mathcal{D}$, and $Z: K(\mathcal{D})\rightarrow \mathbb{C}$ a group homomorphism satisfying:
		
		(i) For any $E\in \mathcal{A}$, we have $Z(E)\in\mathbb{H}\cup\mathbb{R}_{\leq 0}$. For any $K\in \ker(Z)\cap\A$, we have $0<\phi(K)\leq 1$. 
  
		(ii) (Weak see-saw property) For any short exact sequence 
		\begin{equation*}
			0\rightarrow K_1\rightarrow K\rightarrow K_2\rightarrow 0
		\end{equation*}
		in $\ker(Z)\cap\A$, we have $\phi(K_1)\geq \phi(K)\geq \phi(K_2)$ or $\phi(K_1)\leq \phi(K)\leq \phi(K_2)$.

		 For any object $E\notin \ker(Z)$, define the phase of an object $E\in\A$ by $$\phi(E)=(1/\pi) \mathrm{arg}\, Z(E)\in (0, 1].$$ We further require 
   
   (iii) The phase function satisfies the Harder-Narasimhan (HN) property. 
	\end{defn}

Note that to talk about HN property in (iii) of the above definition, one will need the notion of the slope of a stability condition and the notion of semistability, which we define below.
 
%\ccblue{Note that to talk about HN property in (iii) of the above definition, one will need the notation of the slope of a stability condition, which we define below. }	
	%We call a triple $(Z, \A, \{\phi(K)\}_{K\in Ker(Z)\cap \A})$ a weak pre-stability condition if it only satisfies (i) and (ii) in the Definition. 

\subparagraph	\label{para:ssdefforweakstabcon}   Define the slope of an object with respect to a weak stability condition by 
$\rho(E)=-\text{cot}(\pi\phi(E))$
%$\rho(E)=-\frac{\Re(E)}{\Im(E)}$. 
Following \cite{piyaratne2015moduli}, an object $E\in \A$ is (semi)-stable if for any nonzero subobject $F\subset E$ in $\A$, we have 
\[
\rho(F)<(\leq)\rho(E/F),
\]
or equivalently,
\[
\phi(F)<(\leq)\phi(E/F).
\]
	\begin{rem}\label{rem:link1}
         Note that the condition (ii) in Definition \ref{Def:Weak} (weak see-saw property) implies that for any short exact sequence in $\A$
		\begin{equation*}
			0\rightarrow F_1\rightarrow F\rightarrow F_2\rightarrow 0,
		\end{equation*}
		we have $\phi(F_1)\geq \phi(F_2)$ implies that $\phi(F_1)\geq\phi(F)\geq \phi(F_2)$, and $\phi(F_1)\leq \phi(F_2)$ implies that $\phi(F_1)\leq\phi(F)\leq \phi(F_2)$. %In particular, the weak see-saw property implies (ii) in Lemma \ref{lem:tech1}. 
        %Note that different data of $\{\phi(E)\}_{E\in Ker(Z)\cap \A}$ defines different weak stability condition. 
	\end{rem}

The weak seesaw property built into the definition of a weak stability condition also gives the following property, which comes naturally with Bridgeland stability conditions:
 \begin{lemsub}\label{lem:AG52-37-1}
Let $(Z, \Ac, \{ \phi(K) \}_{K \in \kernel (Z) \cap \Ac})$ be a weak stability condition.  Then for any short exact sequence in $\Ac$
\[
0 \to E' \to E \to E'' \to 0
\]
such that $\phi(E'), \phi (E'')>\phi_0$ for some constant $\phi_0$, we also have $\phi (E)>\phi_0$.
 \end{lemsub}

 \begin{proof}
If at least one of $Z(E'), Z(E'')$ is nonzero, then the claim is clear.  Suppose $Z(E'), Z(E'')$ are both zero.  Then by the weak seesaw property, we have either $\phi (E') \geq \phi (E) \geq \phi (E'')$ or $\phi (E') \leq \phi (E) \leq \phi (E'')$.  It follows that 
\[
\phi (E) \geq \min{\{\phi (E'), \phi (E'')\}}> \phi_0
\]
and the claim follows.
 \end{proof}
	
	Let $(Z, \A, \{\phi(K)\}_{K\in \ker(Z)\cap\A})$ be a weak stability condition. Similar to Bridgeland stability conditions, for $0<\phi\leq 1$, we define the full additive subcategories $\cP(\phi)$ of $\mathcal{D}$ consisting of the objects in $\A$ which are semistable of phase $\phi$ together with the zero object in $\mathcal{D}$. For general $\phi$ define $\cP(\phi+1)=\cP(\phi)[1]$.
	\begin{prop}
		\label{Prop:Slic}
		$\cP$ defines a slicing in the sense of Definition 3.3 in \cite{StabTC}.
	\end{prop}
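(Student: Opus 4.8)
The plan is to verify the three axioms of a slicing from \cite[Definition 3.3]{StabTC} for the family of subcategories $\{\cP(\phi)\}_{\phi \in \mathbb{R}}$ defined above: (a) the shift compatibility $\cP(\phi+1) = \cP(\phi)[1]$; (b) the $\Hom$-vanishing $\Hom(A, B) = 0$ whenever $A \in \cP(\phi_1)$, $B \in \cP(\phi_2)$ and $\phi_1 > \phi_2$; and (c) the existence of Harder--Narasimhan filtrations: every nonzero $E \in \mathcal{D}$ admits a finite filtration with semistable factors of strictly decreasing phase.

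Axiom (a) is immediate from the definition $\cP(\phi+1) := \cP(\phi)[1]$, so the first real step is axiom (b). Here I would first reduce to the case $0 < \phi_2 < \phi_1 \le 1$ by shifting, using that $\A$ is the heart of a bounded $t$-structure so that $\Hom(A, B) = 0$ whenever $A \in \A$ and $B \in \A[n]$ for $n < 0$ (and dually for $n > 0$). With $A, B \in \A$ both semistable and $\phi(A) = \phi_1 > \phi_2 = \phi(B)$, suppose $g : A \to B$ is nonzero. Let $I = \image(g) \subseteq B$ in $\A$, which is a nonzero quotient of $A$ and a nonzero subobject of $B$. By semistability of $A$, every nonzero quotient has phase $\ge \phi(A) = \phi_1$, so $\phi(I) \ge \phi_1$; by semistability of $B$, every nonzero subobject has phase $\le \phi(B) = \phi_2$, so $\phi(I) \le \phi_2$. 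This forces $\phi_1 \le \phi_2$, a contradiction. One subtlety: when $Z(I) = 0$ the inequalities $\phi(\text{quotient}) \ge \phi(A)$ and $\phi(\text{subobject}) \le \phi(B)$ coming from the definition of semistability in \ref{para:ssdefforweakstabcon} still hold, since those inequalities are phrased directly in terms of $\phi$, so this case needs no separate treatment — but I should be careful to state the semistability inequalities using $\phi$ rather than $\rho$ precisely so that kernel objects are handled uniformly.

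Axiom (c) is the main obstacle. The HN property for objects of $\A$ is part of the definition of a weak stability condition (axiom (iii) of Definition \ref{Def:Weak}), so the content is to bootstrap this to all of $\mathcal{D}$, exactly as in the Bridgeland case. Given $0 \neq E \in \mathcal{D}$, since $\A$ is the heart of a bounded $t$-structure, $E$ has a finite filtration by shifts $H^i(E)[-i]$ of its cohomology objects $H^i(E) \in \A$; splice the HN filtrations of each $H^i(E)$ (which exist in $\A$, and whose factors lie in the appropriate $\cP(\phi)$ with $0 < \phi \le 1$) together with the shifts, and check that after shifting by $[-i]$ the phases across different cohomological degrees remain strictly decreasing — this uses that the phases in $\A$ lie in $(0,1]$ while those in $\A[1]$ lie in $(1,2]$, etc., so there is no overlap. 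The one point requiring care is that a priori the HN factors in a fixed degree only have \emph{weakly} decreasing phases could repeat; but one collects consecutive factors of equal phase into a single factor, which is semistable of that phase by Lemma \ref{lem:AG52-37-1} applied inductively (or directly by the weak see-saw property), yielding a genuine HN filtration with strictly decreasing phases. I would also remark that uniqueness of the HN filtration, if desired, follows formally from axiom (b) as in \cite{StabTC}, though Definition 3.3 there only requires existence.

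In summary: (a) is definitional; (b) follows from semistability applied to the image of a morphism, with the kernel-object case subsumed because semistability is stated via $\phi$; and (c) follows by splicing the HN filtrations guaranteed in $\A$ by Definition \ref{Def:Weak}(iii) across the cohomological filtration coming from the bounded $t$-structure, merging equal-phase factors using the weak see-saw property. The expected difficulty is entirely in the careful bookkeeping of phases across cohomological degrees in step (c); the rest is routine.
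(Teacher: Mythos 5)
Your proposal is correct and, for the Hom-vanishing axiom (the only one the paper checks explicitly), takes essentially the same route as the paper: apply semistability of $A$ and $B$ to the image of the nonzero morphism and sandwich its phase between $\phi_1$ and $\phi_2$, while axioms (a) and (c) are handled by the standard definitional/splicing arguments the paper leaves implicit. Two small caveats: the step ``every nonzero quotient of $A$ has phase $\ge \phi(A)$'' does not follow directly from the definition of semistability (which compares a subobject with its quotient, not with the object itself) but requires the weak see-saw property as in Remark \ref{rem:link1} --- exactly what the paper invokes explicitly --- and your contingency about merging equal-phase HN factors in axiom (c) is both unnecessary (the HN filtrations produced in the heart, e.g.\ via Proposition \ref{Prop:HNfil}, already have strictly decreasing phases) and not justified by Lemma \ref{lem:AG52-37-1} alone, which controls the phase of an extension but not its semistability.
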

	\begin{proof}
		We only need to check the following axiom in Definition 3.3:
		\[
		\text{If $\phi_1>\phi_2$, $A_j\in\cP(\phi_j)$, then $\Hom_\mathcal{D}(A_1, A_2)=0$.}
		\]
		Without loss of generality, we assume that $A_2\in \A$. Since $\A$ is the heart of a bounded t-structure, if $\phi_1>1$ we have $\Hom_\mathcal{D}(A_1, A_2)=0$. So we may assume that $A_i\in \A$ for both $i$.
		Assuming there is a nonzero map $f: A_1\rightarrow A_2$,  we have the following short exact sequence in $\A$:
		$$0\rightarrow \kernel (f)\rightarrow A_1\rightarrow \image (f)\rightarrow 0,$$ 
		$$0\rightarrow \image (f)\rightarrow A_2\rightarrow \cokernel (f)\rightarrow 0.$$ 
		Since $A_j$'s are semistable, we have $\phi(\kernel (f))\leq \phi(\image (f))\leq \phi(\cokernel (f))$. 
  Then by the weak see-saw property, we have 
  \begin{equation*}
  \phi(\kernel (f))\leq \phi_1\leq \phi(\image (f))\leq\phi_2\leq \phi(\cokernel (f))
  \end{equation*}
  %If $A_1$ and $A_2$ are not in $Ker(Z)$, we have $\phi_1\leq \phi_2$, which contradict with $\phi_1>\phi_2$. If $A_1$ or $A_2$ lies in $Ker(Z)$, by assumption (ii) in Definition \ref{Def:Weak}, we also have $\phi_1\leq \phi_2$. This again leads to a contradiction.
  which contradict with $\phi_1>\phi_2$.
	\end{proof}

Let $\cP((0, 1])$ be the extension closure of all the $\cP(\phi)$ where $\phi \in (0,1]$.
\begin{prop}
A weak stability condition can be equivalently defined by 
$$\sigma=(Z, \cP, \{\phi(E)\}_{E\in \ker(Z)\cap\cP((0, 1])}),$$
where $\cP$ is a slicing with properties as in Definition 3.3 in \cite{StabTC}, and the central charge $Z: K(\mathcal{D})\rightarrow \mathbb{C}$ is a group homomorphism satisfying:

(i) For $E\in \cP(\phi)$,

\begin{itemize}
    \item if $E\notin \cP(\phi)\cap \ker(Z)$, then $Z(E)=m(E)e^{i\pi\phi}$ where $m(E)\in \mathbb{R}_{> 0}$;
    \item if $E\in \cP(\phi)\cap \ker(Z)$, then $\phi(E)=\phi$.
\end{itemize}

(ii) (weak see-saw property) For any short exact sequence 
		\begin{equation*}
			0\rightarrow K_1\rightarrow K\rightarrow K_2\rightarrow 0
		\end{equation*}
		in $\ker(Z)\cap\cP((0, 1])$, we have $\phi(K_1)\geq \phi(K)\geq \phi(K_2)$ or $\phi(K_1)\leq \phi(K)\leq \phi(K_2)$.
\end{prop}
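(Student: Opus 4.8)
The plan is to establish a bijection between the two packages of data — $(Z,\Ac,\{\phi(K)\})$ on the one hand and $(Z,\cP,\{\phi(E)\})$ on the other — and to check that the defining conditions correspond under this bijection. The bridge in one direction is already built: Proposition \ref{Prop:Slic} produces a slicing $\cP$ from a weak stability condition, and one checks directly from the construction of $\cP$ that the axioms of Definition 3.3 in \cite{StabTC} are met (the Hom-vanishing axiom is exactly Proposition \ref{Prop:Slic}; the existence of Harder--Narasimhan filtrations is condition (iii) of Definition \ref{Def:Weak}; the compatibility $\cP(\phi+1)=\cP(\phi)[1]$ is by definition). For the reverse direction, given a slicing $\cP$ one recovers the heart as $\Ac=\cP((0,1])$, which is the heart of a bounded t-structure by \cite[Prop.~5.3]{StabTC} (or \cite[Lemma 3.2]{StabTC}), and one must verify that the central charge and phase data satisfy (i), (ii), (iii) of Definition \ref{Def:Weak}.

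First I would spell out that, given a weak stability condition in the sense of Definition \ref{Def:Weak}, the associated $\cP$ satisfies condition (i) of the proposition: for a nonzero semistable $E\in\cP(\phi)$ with $Z(E)\neq 0$, we have $\phi(E)=(1/\pi)\arg Z(E)$ by definition, so $Z(E)=m(E)e^{i\pi\phi}$ with $m(E)=|Z(E)|>0$; and if $Z(E)=0$ then $E\in\cP(\phi)\cap\ker Z$ has assigned phase $\phi$ by the very definition of $\cP(\phi)$. Condition (ii) of the proposition is literally condition (ii) of Definition \ref{Def:Weak} once one notes $\ker(Z)\cap\Ac=\ker(Z)\cap\cP((0,1])$. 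Conversely, starting from the slicing data, condition (i) of the proposition forces $Z(E)\in\mathbb{H}\cup\mathbb{R}_{\leq 0}$ for every nonzero $E\in\cP(\phi)$, $\phi\in(0,1]$; since every object of $\Ac=\cP((0,1])$ is a finite extension of such semistable pieces and $Z$ is additive, and since the closed region $\overline{\mathbb H}\cap(\mathbb{H}\cup\mathbb R_{\leq 0})$ — i.e. $\{re^{i\pi\phi}:r\geq 0,\ \phi\in(0,1]\}\cup\{0\}$ — is closed under addition, we get $Z(E)\in\mathbb{H}\cup\mathbb{R}_{\leq 0}$ for all $E\in\Ac$, which is the first half of (i) of Definition \ref{Def:Weak}; the constraint $0<\phi(K)\leq 1$ for $K\in\ker(Z)\cap\Ac$ is immediate since $K\in\cP(\phi)$ for some $\phi\in(0,1]$. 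The weak see-saw property transfers verbatim, and the HN property (iii) of Definition \ref{Def:Weak} is the existence of HN filtrations built into the slicing.

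Next I would check that the phase function attached to objects in each framework agrees, so that the notion of semistability — and hence the subcategories $\cP(\phi)$ — is the same object computed two ways; this is what makes the correspondence a genuine equivalence rather than merely a pair of maps. Concretely, one observes that for $E\notin\ker Z$ both frameworks decree $\phi(E)=(1/\pi)\arg Z(E)$, while for $E\in\ker Z$ the phase is part of the data in both frameworks and is matched by construction of $\cP$; an object is semistable in one sense if and only if it is semistable in the other by the slope/phase comparison in \ref{para:ssdefforweakstabcon}. It then follows that applying ``$\cP\mapsto(\Ac,\{\phi\})$'' and then ``$(\Ac,\{\phi\})\mapsto\cP$'' returns the original slicing, and vice versa, so the two sets of data are in bijection and the defining axioms correspond term by term.

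**The main obstacle** I expect is not any single deep step but rather the careful bookkeeping around the boundary phase $\phi=1$ and the kernel objects: one must make sure that the region $\{re^{i\pi\phi}: r\geq 0,\ \phi\in(0,1]\}\cup\{0\}$ used for the additivity argument is exactly $\mathbb{H}\cup\mathbb{R}_{\leq 0}$ together with $0$, that semistable objects of phase $1$ with $Z=0$ are handled consistently on both sides, and that the HN filtration of an object of $\Ac$ produced by the slicing has all its factors with phases in $(0,1]$ (so that it is an HN filtration in the sense used in Definition \ref{Def:Weak}(iii)) — this last point uses that $\Ac=\cP((0,1])$ is precisely the extension closure of the relevant slices. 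Once these compatibility checks are in place, the equivalence is a formal unwinding of definitions, closely parallel to Bridgeland's original Proposition 5.3 in \cite{StabTC}, with the kernel-phase data carried along as an extra decoration.
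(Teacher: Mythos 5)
Your proposal is correct and follows essentially the same route as the paper's proof: the forward direction via Proposition \ref{Prop:Slic}, and the backward direction by taking $\Ac=\cP((0,1])$ as the heart of the bounded t-structure determined by the slicing, with the kernel phases and weak see-saw data transferred directly. The additional verifications you supply (closure of $\mathbb{H}\cup\mathbb{R}_{\leq 0}$ under addition for extensions of semistable pieces, and the matching of the two phase/semistability notions) are compatibility checks the paper leaves implicit, so nothing in your argument diverges from theirs.
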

\begin{proof}
Suppose we have a weak stability condition $(Z,\Ac, \{\phi(K)\}_{K\in \ker(Z)\cap \A})$ as in Definition \ref{Def:Weak}.    By Proposition \ref{Prop:Slic}, we have that the triple defines a slicing $\cP$, where $\A=\cP((0, 1])$. Then the phase of the objects in $\ker(Z)\cap\A$ in Definition \ref{Def:Weak} defines the phases $\{\phi(E) : E\in \ker(Z)\cap \cP((0, 1])\}$ and the weak see-saw property implies (ii) in the Proposition.

%a triple $(Z, \cP, \phi(E)|_{E\in \ker(Z)\cap\cP((0, 1])})$ as in the statement of the Proposition. We only need to show the reversed direction. 

Now assume that we have a triple $(Z, \cP, \phi(E)|_{E\in \ker(Z)\cap\cP((0, 1])})$ as stated in the proposition. Since $\cP$ is a slicing, the full subcategory $(\cP(>0), \cP(\leq 1))$ defines a bounded t-structure on $\mathcal{D}$. Let $\A=\cP((0, 1])$ be the heart of this t-structure, then $\A$ is an abelian category.  Then the definition of the triple $\sigma=(Z, \cP, \phi(E)|_{E\in \ker(Z)\cap\cP((0, 1])})$ implies the weak see-saw property and defines the phases $\{\phi(E):E\in \ker(Z)\cap \A\}$.
\end{proof}
 %\begin{rem}
%		By Proposition \ref{Prop:Slic}, a weak stability condition can be equivalently defined by a triple $(Z, \cP, \phi(E)|_{E\in Ker{Z}\cap\cP((0, 1])})$.
%	\end{rem}

	We also modify the criterion Proposition 2.4 in \cite{StabTC} for the existence of HN filtration, for use in later sections.
	\begin{prop}%\cite[Proposition 2.4]{StabTC}
		\label{Prop:HNfil}
  Let $\sigma=(Z, \A, \phi(E)|_{E\in \ker(Z)\cap\A})$ be a triple satisfying conditions (i) and (ii) in the Definition \ref{Def:Weak}.
	%	Let $\A$ be an abelian category, and $Z$ a weak stability function on $\A$. 
		Suppose $Z$ satisfies the following chain conditions:
		
		(i) There are no infinite sequences of subobjects in $\A$:
		$$\cdots\subset E_{i+1}\subset E_i\subset\cdots\subset E_1$$
		with $\phi(E_{i+1})>\phi(E_i/E_{i+1})$ for all $i$.
		
		(ii) There are no infinite sequences of quotients in $\A$:
		$$E_1\twoheadrightarrow \cdots\twoheadrightarrow E_i\twoheadrightarrow E_{i+1}\twoheadrightarrow\cdots$$
		with $\phi(K_i)>\phi(E_{i+1})$ for all $i$, where $K_i=\ker(E_i\rightarrow E_{i+1})$.

		Then $\sigma$ has the Harder-Narasimhan (HN) property.
	\end{prop}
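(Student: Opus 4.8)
The proof will follow Bridgeland's argument for \cite[Prop.~2.4]{StabTC} (itself modeled on Rudakov's work on stability in abelian categories), the only modification being that the classical see-saw property is replaced throughout by the weak see-saw property of Definition \ref{Def:Weak}(ii). The only property of the phase function that this argument uses is the following \emph{phase sandwich}: for every short exact sequence $0 \to E' \to E \to E'' \to 0$ in $\Ac$,
\[
\min\{\phi(E'),\phi(E'')\} \;\le\; \phi(E) \;\le\; \max\{\phi(E'),\phi(E'')\}.
\]
I would establish this first. If $Z(E)=0$ then, since $\Im Z(E')$ and $\Im Z(E'')$ are nonnegative and sum to $\Im Z(E)=0$, both have vanishing imaginary part, hence lie in $\RR_{\le 0}$, and, summing to $0$, both vanish; so the sequence lies entirely in $\kernel(Z)\cap\Ac$ and the sandwich is exactly the weak see-saw. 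If $Z(E)\ne 0$ then at least one of $Z(E'),Z(E'')$ is nonzero; if, say, $Z(E'')=0$ the claim is immediate since $\phi(E)=\phi(E'')$, and otherwise it is the elementary fact that the argument of the (necessarily nonzero) sum of two vectors in the closed upper half-plane lies between their arguments. In particular this recovers Lemma \ref{lem:AG52-37-1} and its evident dual, that $\phi(E'),\phi(E'')<\phi_0$ forces $\phi(E)<\phi_0$. Note that, as already in \cite{piyaratne2015moduli}, we only obtain the non-strict sandwich; the loss of strictness will be absorbed below.

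Granting the sandwich, the argument splits in two. First, using chain condition (i), I would show that every nonzero $E\in\Ac$ admits a \emph{maximal destabilizing subobject}: a nonzero semistable subobject $E_1\subseteq E$ with $\phi(A)\le\phi(E_1)$ for every nonzero subobject $A\subseteq E$ (including $A=E$), and $A\subseteq E_1$ whenever equality holds; with the sandwich this also gives $\phi(E_1)>\phi(E/E_1)$ whenever $E_1\ne E$. This is the technical core: one runs the standard Rudakov-type argument, in which the failure of existence of such an $E_1$ would produce an infinite chain $\cdots\subset B_{i+1}\subset B_i\subset\cdots\subset E$ with $\phi(B_{i+1})>\phi(B_i/B_{i+1})$ for all $i$, contradicting (i). Every phase comparison appearing in that argument — for instance, that a sum of two semistable subobjects of a common phase is again semistable of that phase, and that phases can only drop along the relevant passages to quotients — is obtained from the phase sandwich, and in the cases where the objects involved lie in $\kernel(Z)\cap\Ac$ (where additivity of $Z$ gives no leverage) directly from the weak see-saw; the loss of strictness noted above is exactly what the ``equality implies containment'' clause in the definition of $E_1$ compensates for. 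This is the only step where the generalized definition demands genuine care, and it is the one I expect to be the main obstacle.

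Second, with maximal destabilizing subobjects in hand, one iterates: let $E_1\subseteq E$ be the maximal destabilizing subobject, let $E_2\subseteq E$ be the preimage of the maximal destabilizing subobject of $E/E_1$, and so on, producing a tower of quotients $E = E/E_0 \twoheadrightarrow E/E_1 \twoheadrightarrow E/E_2 \twoheadrightarrow\cdots$ whose successive kernels $F_i := E_i/E_{i-1}$ satisfy $\phi(F_i)>\phi(E/E_i)$; by chain condition (ii) this tower stabilizes, say at $E_n=E$. Finally one checks $\phi(F_1)>\phi(F_2)>\cdots>\phi(F_n)$: if $\phi(F_{i+1})\ge\phi(F_i)$, then applying the sandwich to $0\to F_i\to G\to F_{i+1}\to 0$ — where $G\subseteq E/E_{i-1}$ is the preimage of $F_{i+1}\subseteq E/E_i$ — together with maximality of $F_i$ forces $\phi(G)=\phi(F_i)$ and hence $G\subseteq F_i$, i.e. $F_{i+1}=0$, a contradiction. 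Thus $0=E_0\subset E_1\subset\cdots\subset E_n=E$ is a Harder--Narasimhan filtration of $E$; uniqueness follows by the usual argument, since semistable objects of strictly decreasing phase admit no nonzero morphisms between them, exactly as in the proof of Proposition \ref{Prop:Slic}.
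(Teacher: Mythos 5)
Your phase sandwich is correct (and it does subsume Lemma \ref{lem:AG52-37-1}), and your overall architecture is the mirror image of the paper's: the paper builds a \emph{maximally destabilizing quotient} using chain condition (ii) and gets termination of the HN iteration from (i), while you propose a maximal destabilizing \emph{subobject} from (i) and termination from (ii). That dualization is plausible in principle. The problem is that the one step you do not carry out --- existence of a maximal destabilizing subobject $E_1$ in your strong sense, namely $\phi(A)\le\phi(E_1)$ for \emph{every} nonzero $A\subseteq E$ with $A\subseteq E_1$ whenever equality holds --- is exactly where all the difficulty of the weak setting is concentrated, and "run the standard Rudakov-type argument" does not cover it. Your concluding step genuinely uses the containment clause for the preimage $G$ of $F_{i+1}$, which need not be semistable; so the clause must hold for arbitrary equal-phase subobjects, not just semistable ones.

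This is a real gap, not a routine verification, for the following concrete reason. Because the see-saw here is only weak, a subobject of maximal phase need not be semistable: if $F\subset A$ destabilizes $A$, i.e.\ $\phi(F)>\phi(A/F)$, the sandwich gives only $\phi(F)\ge\phi(A)$, so a non-semistable $A$ can sit at the top phase (with the destabilizing $F$ at the same phase, typically inside $\ker(Z)\cap\Ac$). Hence the classical reduction "every subobject of maximal phase is semistable, and their sum is the unique largest one" fails, and the constructive Bridgeland-style induction that chain condition (i) supports only lets you verify extremality of your candidate against \emph{semistable} subobjects (the analogue of the paper's Claim 2); upgrading to containment of arbitrary equal-phase subobjects would need sums or ascending unions of subobjects, i.e.\ noetherian-type hypotheses the proposition does not assume, or a genuinely new argument. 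This equality case is precisely where the paper has to work: its mdq definition carries an extra clause in the equality case ($\phi(K')\le\phi(B')$ for $K'=\kernel(B\to B')$), the semistability of the mdq "requires the use of condition (ii)" of Definition \ref{Def:Weak}, and Claim 3 needs a separate two-diagram analysis for the case $\phi(Q)=\phi(B)$ with $Q$ not semistable. Your proposal contains no counterpart to any of this, so the strict inequality $\phi(F_i)>\phi(F_{i+1})$ (and with it the HN property) is not yet proved; you would need either to prove your strong mds statement directly or to weaken its definition along the paper's lines and redo the final step accordingly.
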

	\begin{proof}
		We follow Bridgeland's argument with certain modifications. 
		 %If $E\in \K$, we have the assumption as in the statement of the Proposition. 
   
		Claim 1:  For any $0\neq E\in \A$ where $E$ is not semistable, %by the chain conditions (i) and (ii),
		there exists a semistable subobject $A\subset E$ with $\phi(A)> \phi(E/A)$; similarly, there exists semistable quotient $E\twoheadrightarrow B$ with $\phi(K)> \phi(B)$ where $K=\kernel (E\twoheadrightarrow B)$.  
		
		We prove the claim for semistable quotient, as the case of semistable subobject is similar. 
  %If $E$ is semistable, we are done. Otherwise 
  We have a sequence of short exact sequences 
		$$0\rightarrow A_i\rightarrow E_{i-1}\rightarrow E_{i}\rightarrow 0,$$ with $E_0=E$, and $\phi(A_i)>\phi(E_i)$ for all $i$. 
		By the chain condition (ii), we have the sequence  $$E\twoheadrightarrow E_1\twoheadrightarrow ...\twoheadrightarrow E_i\twoheadrightarrow...$$
		is stationary, hence there exists semistable quotient $E\twoheadrightarrow E_n$. Let $A^{(i)}$ be the kernel of $E\twoheadrightarrow E_i$. Next we show that $\phi(A^{(n)})>\phi(E_n)$.

  By the diagram
		\begin{center}
		\begin{tikzcd}
			& & & 0\ar{d} &\\
			& 0	\ar{d}&  &A_i \ar{d} &  \\
			0	\ar{r} & A^{(i-1)} \ar{d}\ar{r} &E \ar[equal]{d}\ar{r} & E_{i-1}\ar{d}\ar{r} &0\\
			0\ar{r}&A^{(i)} \ar{r}& E \ar{r} & E_i\ar{r}\ar{d}& 0\\
			&& &0 &
			%&A_i \ar{d}& &0 & \\
			%& 0 & & &
		\end{tikzcd}
		\end{center}
		we have short exact sequence 
		$$0\rightarrow A^{(i-1)}\rightarrow A^{(i)}\rightarrow A_i\rightarrow 0.$$
		We prove by induction that $\phi(A^{(i)})>\phi(E_{i})$. By induction hypothesis we have $\phi(A^{(i-1)})>\phi(E_{i-1})$, which implies that $$\phi(A^{(i-1)})>\phi(E_{i-1})\geq \phi(E_i).$$ We also know that $\phi(A_i)>\phi(E_i)$. By Lemma \ref{lem:AG52-37-1}, we have $\phi(A^{(i)})>\phi(E_i)$.  This finishes the proof of Claim 1. %hence if $A^{(i)}\notin \K$, we have $\phi(A^{(i)})>\phi(E_i)$. If $A^{(i)}\in \K$, by assumption (ii) in Definition \ref{Def:Weak}, we also have $\phi(A^{(i)})>\phi(E_i)$. 
		
		A maximally destabilising quotient (mdq) of an object $0\neq E\in\A$ is
		defined to be a nonzero quotient $E\twoheadrightarrow B$ such that any nonzero semistable quotient
		$E\twoheadrightarrow B'$ satisfies $\phi(B')\geq \phi(B)$, with equality holding only if (i)
		$E\twoheadrightarrow B'$ factors via $E\twoheadrightarrow B$ and (ii) $\phi(K')\leq\phi(B')$ for $K'=\kernel (B\rightarrow B')$. A routine argument shows  $B$ is semistable (the proof requires the use of condition (ii), which we did not need to impose in the case of Bridgeland stability conditions), and $\phi(E)\geq \phi(B)$.
		
		We first check the mdq exists. If $E$ is semistable, then itself is the mdq for $E$. If $E$ is not semistable, then by Claim 1, there is a short exact sequence in $\A$: 
		
		$$0\rightarrow A\rightarrow E\rightarrow E'\rightarrow 0. $$
		with $A$ semistable and $\phi(A)>\phi(E')$. 
  
  Claim 2:  If $E'\twoheadrightarrow B$ is a mdq for $E'$, then the composition $E\twoheadrightarrow B$ is a mdq for $E$. 
		%Indeed, it is enough to check quotient $E\twoheadrightarrow B'$ with $B'$ semistable. 
  
		Indeed, let $E\twoheadrightarrow B'$ be a quotient with $B'$ semistable. If $\phi(B')\leq \phi(B)$, then $\phi(B')\leq\phi(E')<\phi(A)$. Then there is no map from $A$ to $B'$, and hence the map $E\rightarrow B'$ factors through $E'$. This implies that $\phi(B')\geq \phi(B)$, and hence $\phi(B')= \phi(B)$. Since $E'\twoheadrightarrow B$ is a mdq for $E'$, the conclusion of the Claim follows. 
  
  We continue the process by replacing $E$ by $E'$. By the chain condition (ii), this process terminates. So every non-zero object have a mdq. 
		
		Now we prove the existence of HN filtration under the assumption of the Proposition. Take $0\neq E\in\A$, if $E$ is semistable, we are done. If not, there is a short exact sequence in $\A$:
		$$0\rightarrow E'\rightarrow E\rightarrow B\rightarrow0,$$  
		with $E\twoheadrightarrow B$ the mdq for $E$. By definition of mdq, $B$ is semistable. Also by the construction in the previous paragraph, $\phi(E')>\phi(B)$. Let $E'\rightarrow B'$ be the mdq of $E'$. Consider the diagram $(\dagger)$ in the proof of  \cite[Proposition 2.4]{StabTC}, in which $K$ is taken to be the kernel of $E' \to B'$, and $Q$ taken to be the cokernel of $K \to E$:
		
		\begin{equation}
			\label{Equ:Dagger}
			\begin{tikzcd}
				& & 0\ar{d} & 0\ar{d} &\\
				0\ar{r}& K	\ar{r}\ar[equal]{d}& E'\ar{r}\ar{d} &B' \ar{r}\ar{d} & 0 \\
				0\ar{r} & K\ar{r}&E \ar{r}{a} \ar{d}{c}&Q\ar{r}\ar{d}{b} &0\\
				& & B \ar[equal]{r}\ar{d} & B\ar{r}\ar{d}& 0\\
				& & 0&0 &
				%&A_i \ar{d}& &0 & \\
				%& 0 & & &
			\end{tikzcd}
		\end{equation}
		
		Claim 3. We have $\phi(B')>\phi(B)$. 
  
  First consider the case $Q$ is semistable. Since $E\twoheadrightarrow B$ is mdq, we have $\phi(Q)\geq\phi(B)$. But since $Q$ is semistable, we have $\phi(Q)\leq\phi(B)$. So $\phi(Q)=\phi(B)$. By the property of mdq, the map $E\rightarrow Q$ factors through $E\twoheadrightarrow B$, contradicting with $E'\rightarrow Q$ is nonzero. 
		%If $Q\notin\K$, we must have $B'\notin \K_\B$, which implies that $\phi(B')>\phi(B)$. If $Q\in \K$, by assumption, we also have $\phi(B')>\phi(B)$. 

From here on, assume  $Q$ is not semistable. Let $d : Q\twoheadrightarrow Q'$ be the mdq of $Q$.  Since $Q'$ is the mdq for $Q$, we have $\phi (B) \geq \phi (Q')$; since $B$ is the mdq for $E$, we have $\phi (Q') \geq \phi (B)$.  Hence $\phi (B)=\phi (Q')$.    Now, that $Q'$ is the mdq for $Q$ implies $b$ factors as $b=ed$ for some $e : Q' \twoheadrightarrow B$; on the other hand, that $B$ is the mdq for $E$ implies that $da$ factors as $fc$ for some $f : B \twoheadrightarrow Q'$.  Overall, we have $ed=b$ and $da=fc$ which gives
\begin{gather*}
  (fe)da = fba = fc = da.
\end{gather*}
%, \\
%  efc = efbc = efeda = (ef)efc.
%\end{gather*}
Since $da$ is surjective, this means  $e$ is injective, and hence an isomorphism, i.e.\ $Q' \cong B$. 

If $\phi (Q) > \phi (Q')=\phi (B)$. Applying the weak seesaw property to the short exact sequence $0 \to B' \to Q \to B \to 0$, we obtain $\phi (B') \geq \phi(Q) > \phi (B)$ and hence $\phi (B')>\phi (B)$.

Assume $\phi(Q)=\phi(B)$. Since $Q$ is not semistable, there is a short exact sequence in $\A$
\[
0\to M\to Q\to M'\to 0
\] 
such that $M'$ is semistable and $\phi(M)>\phi(M')$. Since $\phi(Q)\geq \phi(M')$, $\phi(M')\geq \phi(B)$, we have $\phi(M')=\phi(B)$. Then the map $Q\to M'$ factors through $Q\to B$. Consider the following diagram:
	\begin{equation}
			\begin{tikzcd}
				& & 0\ar{d} & 0\ar{d} &\\
				0\ar{r}& B'	\ar{r}\ar[equal]{d}& M\ar{r}\ar{d} &N \ar{r}\ar{d} & 0 \\
				0\ar{r} & B'\ar{r}&Q \ar{r} \ar{d}&B\ar{r}\ar{d} &0\\
				& & M' \ar[equal]{r}\ar{d} & M'\ar{r}\ar{d}& 0\\
				& & 0&0 &
				%&A_i \ar{d}& &0 & \\
				%& 0 & & &
			\end{tikzcd}
		\end{equation} 
By the definition of mdq, we have $\phi(N)\leq \phi(M')$. Then $\phi(M)>\phi(M')$ implies that $\phi(B')>\phi(M')=\phi(B)$.

		Repeating the process for $E'$, by chain condition (i) the sequence terminates, and this is the HN filtration for $E$. 
	\end{proof}
	\begin{defn}
	 Let $E$ be a $\sigma$-semistable object for some weak stability condition 
   \[\sigma=(Z_\sigma, \cP, \{\phi_\sigma(K)\}|_{K\in \ker(Z_\sigma)\cap\cP(0, 1]}).
 \]
 %\ccblue{\[\sigma=(Z_\sigma, \cP, \{\phi_\sigma(K)\}|_{K\in \ker(Z_\sigma)\cap\cP(0, 1]}).
 %\]}
 A Jordan-H\"{o}lder filtration of $E$ is a filtration 
	\begin{equation*}
		0=E_0\subset E_1\subset...\subset E_n=E
	\end{equation*}
	such that the factors $gr_i(E)=E_i/E_{i-1}$ are stable, and 
	for each $i$ either $gr_i(E)\in \ker(Z_\sigma)$, or 
	$\phi_\sigma(gr_i(E))=\phi_\sigma(E)$.
\end{defn}

 \begin{rem}
 \label{Rem:stableness}
 Recall that in \ref{para:ssdefforweakstabcon}, we defined when an object is $\sigma$-stable in the abelian category $\A=\cP((0, 1])$. For a general $t$ we say an object is stable in $\cP(t)$ if it is a shift of a stable object in $\cP(t')$ for some $0<t'\leq 1$.	
 \end{rem}
 
In Section \ref{sec:fourtypesofweakstab}, we will give four types of examples of weak stability conditions:
\begin{itemize}
    \item[(i)]  when the ample class in $\sigma_{\omega, B}$ degenerates to nef class;
    \item[(ii)]  when the coefficient of $\ch_0$ in the central charge $Z$ of a Bridgeland stability condition degenerates to zero;
    \item[(iii)] when (i) and (ii) occur at the same time; and
    \item[(iv)] when a relative Fourier-Mukai transform is applied to  the weak stability condition in (iii).
\end{itemize}   
 These are natural classes of weak stability conditions we obtain when specific parameters of Bridgeland stability approach certain limits. One direct application of the these four weak stability conditions is in the sequel to this  article, in which we obtain stability of line bundles at Bridgeland stability conditions by first studying their stability  at these four types of weak stability conditions.

 %. \ccblue{The examples we worked out are of the following type: (i) when the ample class in $\sigma_{\omega, B}$ becomes a nef class; (ii) when the coefficient of $\ch_0$ in the central charge $Z$ of a Bridgeland stability condition approaches zero; (iii) when the limits of (i) and (ii) happen at the same time; and (iv) the image of the weak stability condition in (iii) under the relative Fourier Mukai transform. These are natural classes of weak stability conditions when certain parameters of Bridgeland stability approaches to limits. One direct application of the these four stability conditions is our following up paper, in which we obtain stability of line bundles at Bridgeland stability conditions via studying the stability of the line bundles at these four types of weak stability conditions. }  %which we will use to answer Question \ref{que:main-1} for the $D_\alpha \geq 0$ case. }

\section{Bogomolov-Gieseker type inequalities}\label{sec:BGi}

Since we will be dealing with weak stability conditions that arise from nef divisors in  Section \ref{sec:fourtypesofweakstab}, we need to study Bogomolov-Gieseker-type inequalities for slope semistable sheaves with respect to nef divisors first.

Let $X$ be a smooth projective surface, and $\omega$ an ample $\mathbb{R}$-divisor on $X$.  The usual Bogomolov-Gieseker inequality for sheaves says the following: for any $\mu_\omega$-semistable torsion-free sheaf $E$ on $X$, we have
\[
  \ch_2 (E) \leq \frac{\ch_1(E)^2}{2\ch_0(E)}.
\]
It is easily checked that this inequality is preserved under twisting the Chern character by a $B$-field.

When $X$ is a K3 surface, we have the following, stronger Bogomolov-Gieseker inequality which  is known to experts (e.g.\ see \cite[Section 6]{ABL}):

%, we include the proof here for the sake of completeness:

\begin{prop}\label{prop:K3surfstrongerBG}
Let $X$ be a K3 surface, and $\omega, B$ any $\mathbb{R}$-divisors on $X$ where $\omega$ is ample.  Then for any $\mu_\omega$-stable torsion-free sheaf $E$ on $X$, we have
\begin{equation}\label{eq:K3surfstrongerBG}
  \ch_2^B(E) \leq \frac{(\ch_1^B(E))^2}{2\ch_0(E)} - \ch_0(E) + \frac{1}{\ch_0(E)}.
\end{equation}
\end{prop}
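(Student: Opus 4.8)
The plan is to deduce the inequality from Hirzebruch--Riemann--Roch together with Serre duality on the K3 surface $X$, following the classical argument recalled in \cite[Section 6]{ABL}.

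The first step is to reduce to the case $B = 0$. Write $\bar\Delta(E) := (\ch_1(E))^2 - 2\ch_0(E)\ch_2(E)$ for the discriminant. A direct computation with $\ch^B(E) = \ch(E)e^{-B}$ (so $\ch_0^B = \ch_0$, $\ch_1^B = \ch_1 - \ch_0 B$, $\ch_2^B = \ch_2 - \ch_1 B + \tfrac{\ch_0}{2}B^2$) shows that the $B$-dependent terms cancel, giving $(\ch_1^B(E))^2 - 2\ch_0(E)\ch_2^B(E) = \bar\Delta(E)$. Hence \eqref{eq:K3surfstrongerBG} is equivalent to its untwisted form $\ch_2(E) \leq \frac{(\ch_1(E))^2}{2\ch_0(E)} - \ch_0(E) + \frac{1}{\ch_0(E)}$, and since twisting the Chern character by $B$ does not alter $E$ as a coherent sheaf, it suffices to prove this untwisted inequality.

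Next, since $E$ is $\mu_\omega$-stable it is simple, so $\Hom(E,E) = \Cfield$. As $X$ is a K3 surface we have $\omega_X \cong \OO_X$, so Serre duality gives $\Ext^2(E,E) \cong \Hom(E,E)^{\vee} = \Cfield$; this is legitimate for the possibly non-locally-free sheaf $E$ because $X$ is a smooth projective surface. Therefore $\chi(E,E) = 1 - \dimension \Ext^1(E,E) + 1 \leq 2$. On the other hand, Hirzebruch--Riemann--Roch on $X$ (using $\td(X) = (1,0,2)$) gives
\[
  \chi(E,E) = \int_X \ch(E)^{\vee}\,\ch(E)\,\td(X) = 2\ch_0(E)\ch_2(E) - (\ch_1(E))^2 + 2\ch_0(E)^2 .
\]
Combining the two computations yields $2\ch_0(E)\ch_2(E) - (\ch_1(E))^2 + 2\ch_0(E)^2 \leq 2$. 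Since $E$ is a nonzero torsion-free sheaf on a surface we have $\ch_0(E) > 0$, so dividing by $2\ch_0(E)$ produces the desired inequality.

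I do not expect a serious obstacle: the argument is essentially the standard one. The only point that warrants explicit care is the application of Serre duality and Riemann--Roch to a torsion-free, not necessarily locally free, sheaf — which is justified by smoothness of $X$ — together with the elementary cancellation $\bar\Delta^B = \bar\Delta$ that makes the reduction to $B = 0$ work. In the write-up I would spell out that cancellation and cite \cite[Section 6]{ABL} for the $B = 0$ statement.
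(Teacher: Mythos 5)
Your argument is correct and is exactly the standard one that the paper itself invokes by citation rather than proof (the paper simply refers to \cite[Section 6]{ABL} for this statement): the $B$-field drops out because the discriminant $(\ch_1^B)^2-2\ch_0\ch_2^B$ is $B$-independent, and the untwisted bound is the rewriting of $\chi(E,E)=2\ch_0\ch_2-\ch_1^2+2\ch_0^2\leq 2$, which follows from simplicity of the $\mu_\omega$-stable sheaf $E$, Serre duality with $\omega_X\cong\OO_X$, and Riemann--Roch. No gaps; the points you flag (Serre duality/HRR for non-locally-free sheaves on a smooth projective surface, $\ch_0(E)>0$ before dividing) are exactly the ones worth spelling out.
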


%\begin{proof}
%Let $E$ be any $\mu_\omega$-stable torsion-free sheaf on $X$.    To begin with, we have
%\end{proof}

We can extend the above result to a certain class of nef divisors on a surface, by first proving an `openness of stability' result.  Recall that on a smooth projective surface, a divisor class is nef if and only if it is movable.

%When $X$ is a Weierstra{\ss} elliptic K3 surface, the above inequality still holds with respect to a nef divisor of the form $H=\Theta + ef$:

%\begin{prop}
%Let $X$ be a Weierstra{\ss} elliptic K3 surface, and $H = \Theta + ef$.  For any $\mu_H$-stable torsion-free sheaf $E$, the inequality \eqref{eq:K3surfstrongerBG} holds.
%\end{prop}

\begin{prop}\label{prop:K3surfstrongerBGdeform}
Let $X$ be a smooth projective surface.  Suppose $H, H'$ are nef $\mathbb{R}$-divisors on $X$ such that
\begin{itemize}
\item[(i)] The value
\[
  \min{ \{Hc_1(E) : Hc_1(E)>0, E \in \mathrm{Coh}(X)\}}
\]
exists.
\item[(ii)] There exists $\epsilon_1>0$ such that $H+\epsilon H'$ is nef for all $\epsilon \in (0, \epsilon_1)$.
\end{itemize}
Then for any torsion-free sheaf $E$ on $X$, there exists $\epsilon_0 \in (0, \epsilon_1)$ depending only on $E, H$ and $H'$  such that, for any $0< \epsilon < \epsilon_0$, we have
\[
   \text{$E$ is $\mu_H$-stable} \Rightarrow \text{ $E$ is  $\mu_{H+\epsilon H'}$-stable}.
\]
\end{prop}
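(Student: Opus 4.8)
The plan is to argue by contradiction, extracting from a hypothetical failure of $\mu_{H+\epsilon H'}$-stability a sequence of destabilizing subsheaves whose numerical invariants are constrained enough to force a contradiction with the $\mu_H$-stability of $E$. Suppose there is a sequence $\epsilon_n \searrow 0$ (with all $\epsilon_n \in (0,\epsilon_1)$) and, for each $n$, a subsheaf $F_n \subset E$ with torsion-free quotient that destabilizes $E$ with respect to $H_n := H + \epsilon_n H'$, i.e. $\mu_{H_n}(F_n) \geq \mu_{H_n}(E)$. Since $E$ is $\mu_H$-stable, each $F_n$ satisfies $\mu_H(F_n) < \mu_H(E)$, so $\mu_{H}(F_n) < \mu_H(E)$ strictly while $\mu_{H_n}(F_n) \geq \mu_{H_n}(E)$; as $\epsilon_n \to 0$ these two slope functions converge, which already forces $\mu_H(F_n) \to \mu_H(E)$. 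The first step is therefore to show that the rank $r_n = \ch_0(F_n)$ is bounded (it lies between $1$ and $\ch_0(E)$, so this is automatic), and then that $H c_1(F_n)$ is bounded: indeed $H c_1(F_n)/r_n = \mu_H(F_n) \to \mu_H(E)$, so $H c_1(F_n)$ is bounded. Passing to a subsequence, we may assume $r_n \equiv r$ and $H c_1(F_n) \equiv d$ are constant (the latter using hypothesis (i), which guarantees the positive values of $Hc_1(-)$ are bounded below, hence that $H c_1(F_n)$ takes finitely many values in any bounded range — here one has to be slightly careful that $Hc_1(E/F_n)$ could be zero, but the discreteness still pins down $Hc_1(F_n)$ along a subsequence).

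With $r$ and $d = Hc_1(F_n)$ now fixed along the subsequence, the destabilizing inequality $\mu_{H_n}(F_n) \geq \mu_{H_n}(E)$ reads
\[
  \frac{d + \epsilon_n H' c_1(F_n)}{r} \;\geq\; \frac{H c_1(E) + \epsilon_n H' c_1(E)}{\ch_0(E)},
\]
which, after clearing denominators and using $d/r = \mu_H(F_n) \leq \mu_H(E) - \delta$ would be impossible for small $\epsilon_n$ unless in fact $d/r = \mu_H(E)$; so we are reduced to the case $\mu_H(F_n) = \mu_H(E)$ exactly, i.e. $F_n$ is a $\mu_H$-destabilizing-of-equal-slope subobject. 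But $\mu_H$-stability of $E$ forbids any proper subsheaf with $\mu_H(F) = \mu_H(E)$, giving the contradiction. The key step to get right is the boundedness/discreteness argument in the previous paragraph: one must ensure that finitely many pairs $(r, Hc_1(F_n))$ can occur, which is exactly where hypothesis (i) enters — without a lower bound on positive values of $Hc_1(-)$, the quantities $Hc_1(F_n)$ could accumulate and the reduction to finitely many numerical types would fail.

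The main obstacle I expect is handling the possibility that $H c_1(E/F_n) = 0$ or $H c_1(F_n) = 0$ along the sequence, i.e. the $H$-slopes involved become infinite or the comparison of slopes degenerates; in that regime one should instead compare using the $H$-degree directly rather than the slope, and note that $H c_1(F_n) = 0$ together with $F_n \subset E$ and $E$ being $\mu_H$-stable forces $F_n = 0$ (a torsion-free subsheaf of a $\mu_H$-stable sheaf with $H$-degree zero and positive rank would violate stability, since $\mu_H(F_n) = 0 < \mu_H(E)$ is fine but then one uses that for $\epsilon_n$ small $\mu_{H_n}(F_n) = \epsilon_n H' c_1(F_n)/r_n$ is small while $\mu_{H_n}(E)$ stays bounded away, contradicting the destabilizing inequality). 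Once these boundary cases are dispatched, the effective bound $\epsilon_0$ depending only on $E, H, H'$ comes out of the finitely many numerical types: for each possible $(r, Hc_1(F))$ with $Hc_1(F)/r < \mu_H(E)$, there is an explicit threshold below which the displayed inequality fails, and $\epsilon_0$ is the minimum of these finitely many thresholds.
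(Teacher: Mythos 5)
There is a genuine gap at the central step of your argument: you never establish any upper bound on $\mu_{H'}(F_n)$ (equivalently on $H'c_1(F_n)$) as $F_n$ ranges over the destabilizing subsheaves, and everything else hinges on this. The assertion that ``as $\epsilon_n\to 0$ these two slope functions converge, which already forces $\mu_H(F_n)\to\mu_H(E)$'' is only valid if $\epsilon_n\,\mu_{H'}(F_n)\to 0$; since the $F_n$ vary with $n$, the convergence $\mu_{H_n}\to\mu_H$ is pointwise in a fixed sheaf, not uniform over subsheaves, and a priori $H'c_1(F_n)$ could grow like $1/\epsilon_n$. The same unjustified step recurs when you claim the cleared-denominator inequality ``would be impossible for small $\epsilon_n$'', again in your treatment of the boundary case $Hc_1(F_n)=0$, and in the final step: the ``threshold'' attached to a numerical type $(r, Hc_1(F))$ in fact depends on $H'c_1(F)$, which is neither determined by nor bounded in terms of that pair, so the ``minimum of finitely many thresholds'' is not well defined. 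Hypothesis (i) only disciplines the $H$-degrees; it gives no control whatsoever on $H'$-degrees of subsheaves.

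This missing bound is exactly the nontrivial input in the paper's proof: since $H'$ is nef, hence movable, the maximal slope $\mu_{H',\mathrm{max}}(E)$ exists and bounds $\mu_{H'}(A)$ for every nonzero subsheaf $A\subset E$ (the paper cites \cite[Corollary 2.24]{greb2016movable}; this is a genuine result, not automatic for non-ample classes, where no Harder--Narasimhan filtration is available off the shelf). With that bound in hand your contradiction argument does close up, and it becomes essentially the contrapositive of the paper's direct argument, which combines the uniform gap $\mu_H(A)\le\mu_H(E)-\delta$ (from hypothesis (i) together with $\mu_H$-stability) with $\mu_{H'}(A)\le\mu_{H',\mathrm{max}}(E)$ and then chooses $\epsilon_0$ so that $\epsilon_0\left|\mu_{H',\mathrm{max}}(E)\right|<\tfrac{\delta}{2}$ and $\epsilon_0\left|\mu_{H'}(E)\right|<\tfrac{\delta}{2}$. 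As written, however, your proof silently assumes precisely the point that needs proof or citation.
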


\begin{proof}
%If $E$ is not locally free, then $\ch_2(E) < \ch_2(E^{\ast\ast})$ while $\ch_i(E)=\ch_i(E^{\ast\ast})$ for $i=0,1$, and so  it suffices to assume that $E$ is locally free.
Since $H'$ is nef, and hence movable, for any nonzero proper subsheaf $A$ of $E$, we have $\mu_{H'}(A) \leq \mu_{H', \mathrm{max}}(E)$ \cite[Corollary 2.24]{greb2016movable}, where $\mu_{H', \mathrm{max}}(E)$ depends only on $H'$ and $E$.

On the other hand, by  assumption (i) and the $\mu_H$-stability of $E$, there exists $\delta >0$ depending only on $E$ and $H$ such that
\[
  \max{\biggl\{ \mu_H(A)= \frac{H\ch_1(A)}{\ch_0(A)} : A \text{ is a nonzero proper subsheaf of }E \biggr\}} \leq \mu_H(E) -\delta.
\]

Now fix an $\epsilon_0 \in (0, \epsilon_1)$ such that
\[
  \epsilon_0 \left|\mu_{H',\mathrm{max}}(E)\right| < \frac{\delta}{2}  \text{\quad and \quad} \epsilon_0 |\mu_{H'}(E)| < \frac{\delta}{2}.
\]
Note that $\epsilon_0$ depends only on $E, H'$ and $\delta$.  Then for any  $0 < \epsilon < \epsilon_0$ and any nonzero proper subsheaf $A$ of $E$, we have
\begin{align*}
  \mu_{H+\epsilon H'}(A) &= \mu_H(A) + \epsilon \mu_{H'}(A) \\
  &\leq (\mu_H(E) - \delta) + \epsilon \mu_{H',\mathrm{max}}(E) \\
  &< \mu_H(E) -\delta +\frac{\delta}{2} \\
  &=\mu_H(E) - \frac{\delta}{2} \\
  &< \mu_H(E) + \epsilon \mu_{H'}(E) \\
  &= \mu_{H+\epsilon H'}(E).
\end{align*}
This means that, for any $0 < \epsilon < \epsilon_0$, the sheaf $E$ is also  $\mu_{H + \epsilon H'}$-stable.

Note that the purpose of condition (ii) is to ensure that when we speak of $\mu_{H+\epsilon H'}$-stability above, we are using a divisor $H+\epsilon H'$ that is  nef and not merely an arbitrary curve class.
\end{proof}

Proposition \ref{prop:K3surfstrongerBGdeform} is similar to the result \cite[Theorem 3.4]{greb2016movable} by Greb, Kebekus, and Peternell, but with slightly different assumptions.

\begin{cor}\label{cor:K3surfstrongerBGdeform}
Let $X$ be a K3 surface.  Suppose $H, H'$ are nef $\mathbb{R}$-divisors on $X$ satisfying condition (i)  in Proposition \ref{prop:K3surfstrongerBGdeform} as well as 
\begin{itemize}
\item[(ii')] There exists $\epsilon_1>0$ such that $H+\epsilon H'$ is ample for all $\epsilon \in (0, \epsilon_1)$.
\end{itemize}
  Then any $\mu_H$-stable torsion-free sheaf $E$ on $X$ satisfies the inequality \eqref{eq:K3surfstrongerBG}.
\end{cor}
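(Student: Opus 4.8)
The plan is to deduce Corollary \ref{cor:K3surfstrongerBGdeform} by combining Proposition \ref{prop:K3surfstrongerBG} with the openness statement of Proposition \ref{prop:K3surfstrongerBGdeform}, and then a limiting argument on the Chern-character inequality. First I would observe that condition (ii') is a special case of condition (ii): if $H + \epsilon H'$ is ample for all $\epsilon \in (0,\epsilon_1)$ then in particular it is nef for all such $\epsilon$, so Proposition \ref{prop:K3surfstrongerBGdeform} applies verbatim to the pair $(H, H')$. Hence, given a $\mu_H$-stable torsion-free sheaf $E$, there is $\epsilon_0 \in (0,\epsilon_1)$, depending only on $E, H, H'$, such that $E$ is $\mu_{H+\epsilon H'}$-stable for every $\epsilon \in (0,\epsilon_0)$.

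Next I would apply Proposition \ref{prop:K3surfstrongerBG} to the ample $\mathbb{R}$-divisor $\omega_\epsilon := H + \epsilon H'$ (with the same $B$-field, say $B = 0$, or an arbitrary fixed $B$): since $E$ is $\mu_{\omega_\epsilon}$-stable and torsion-free, we obtain for each $\epsilon \in (0,\epsilon_0)$ the inequality
\[
  \ch_2^B(E) \leq \frac{(\ch_1^B(E))^2}{2\ch_0(E)} - \ch_0(E) + \frac{1}{\ch_0(E)}.
\]
The crucial point here is that the right-hand side of \eqref{eq:K3surfstrongerBG} does not depend on $\omega$ at all --- it is a function of the (twisted) Chern character of $E$ only. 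Therefore the inequality we get for each small $\epsilon$ is literally the same inequality, namely \eqref{eq:K3surfstrongerBG} itself, and there is in fact nothing to take a limit of: choosing any single $\epsilon \in (0,\epsilon_0)$ already yields the desired conclusion for $E$.

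To present this cleanly I would phrase it as: fix $\epsilon \in (0,\epsilon_0)$; then $\omega_\epsilon = H+\epsilon H'$ is ample by (ii'), and $E$ is $\mu_{\omega_\epsilon}$-stable by Proposition \ref{prop:K3surfstrongerBGdeform}; applying Proposition \ref{prop:K3surfstrongerBG} to $\omega_\epsilon$ gives \eqref{eq:K3surfstrongerBG}. I do not anticipate a genuine obstacle; the only points requiring a word of care are (a) checking that Proposition \ref{prop:K3surfstrongerBGdeform} does apply, i.e.\ that (ii') implies (ii) and that condition (i) is exactly the hypothesis assumed, and (b) noting that torsion-freeness of $E$ is preserved (it is the same sheaf) and that $\mu_H$-stability, not merely semistability, is what is hypothesized, matching the hypothesis of Proposition \ref{prop:K3surfstrongerBGdeform}. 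If one prefers a genuinely limiting formulation --- e.g.\ to emphasize that the inequality ``survives'' as $\epsilon \to 0^+$ --- one can instead note that \eqref{eq:K3surfstrongerBG} holds for a sequence $\epsilon_n \to 0$ and, the inequality being closed and $\epsilon$-independent, it holds at $\epsilon = 0$; but this is cosmetic.
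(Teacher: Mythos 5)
Your proposal is correct and follows essentially the same route as the paper: invoke Proposition \ref{prop:K3surfstrongerBGdeform} (with (ii') giving (ii)) to get $\mu_{H+\epsilon H'}$-stability for small $\epsilon$, then apply Proposition \ref{prop:K3surfstrongerBG} to the ample divisor $H+\epsilon H'$, noting the inequality \eqref{eq:K3surfstrongerBG} is independent of the polarization. Your remark that no limiting argument is actually needed matches the paper's (equally brief) proof.
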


\begin{proof}
Let $\epsilon_0$ be as in Proposition \ref{prop:K3surfstrongerBGdeform}.  Then for any $0 < \epsilon < \epsilon_0$, the sheaf $E$ is also $\mu_{H+\epsilon H'}$-stable.  Since $H+\epsilon H'$ is an ample divisor, Proposition \ref{prop:K3surfstrongerBG} applies to $E$ and the claim follows.
\end{proof}

%[mention in the prelim section that our Weierstra{\ss} elliptic surface is K3 iff $e=2$]

Here is a lemma we need for the example below:

\begin{lem}\label{lem:AG50-132-1}
Let $p : X \to Y$ be an elliptic surface with a section $\Theta$.  Then a divisor of the form $\Theta + af$ where $a \in \mathbb{R}_{>0}$ is ample if and only if $a > -\Theta^2$.
\end{lem}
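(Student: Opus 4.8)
The statement is about when $\Theta + af$ is ample on a Weierstra\ss\ elliptic surface, and the natural tool is the Nakai--Moishezon criterion: a divisor $D$ on a smooth projective surface is ample if and only if $D^2 > 0$ and $D \cdot C > 0$ for every irreducible curve $C$. So the plan is to set $D = \Theta + af$ with $a \in \mathbb{R}_{>0}$ and compute these intersection numbers using the standard intersection data on $X$, namely $\Theta^2 = -e$ (so $e = -\Theta^2$), $f^2 = 0$, and $\Theta \cdot f = 1$ (the section meets each fiber once).

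\textbf{Key steps.} First I would compute $D^2 = \Theta^2 + 2a(\Theta \cdot f) + a^2 f^2 = -e + 2a$, so $D^2 > 0 \iff a > e/2$; note this is a weaker condition than $a > e$, so it will not be the binding constraint. Second, I would check $D \cdot f = \Theta \cdot f + a f^2 = 1 > 0$ automatically. Third, and this is the heart of the matter, I would test $D$ against every other irreducible curve $C$. Write $C \equiv \alpha \Theta + \beta f + (\text{components in fibers})$; more efficiently, observe that an irreducible curve $C \neq \Theta$ which is not a component of a fiber is \emph{horizontal}, i.e.\ $p|_C : C \to Y$ is surjective of some degree $d = C \cdot f \geq 1$, and $C \cdot \Theta \geq 0$ since $C \neq \Theta$ and both are irreducible (distinct irreducible curves meet non-negatively). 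Hence $D \cdot C = \Theta \cdot C + a (f \cdot C) \geq 0 + a \cdot 1 > 0$ for all such $C$, using $a > 0$. For $C$ a component of a fiber we have $f \cdot C = 0$ and $\Theta \cdot C \geq 0$ with equality only if $\Theta$ misses that component; but $\Theta$ is a section, so $\Theta \cdot F = 1$ for any full fiber $F$, and $\Theta$ meets exactly one component of each fiber (and by hypothesis avoids the singular points). In the cuspidal case the fiber is irreducible so $C = F$ and $D \cdot C = D \cdot f = 1 > 0$; in the nodal/reducible case one still gets $D \cdot C = \Theta \cdot C \geq 0$, but could it be zero? Here I need the observation that reducible fibers do not occur in our setting: a Weierstra\ss\ fibration as in \ref{para:def-Wellsurf} has geometrically integral fibers, so every fiber is irreducible and $C = F \equiv f$, giving $D \cdot C = 1 > 0$. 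The remaining curve to test is $C = \Theta$ itself: $D \cdot \Theta = \Theta^2 + a(\Theta \cdot f) = -e + a = a - e$, which is $> 0$ if and only if $a > e = -\Theta^2$.

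\textbf{Conclusion and main obstacle.} Combining: for $a > 0$, the divisor $\Theta + af$ is ample if and only if $D^2 > 0$ and $D \cdot \Theta > 0$ and $D \cdot C > 0$ for all other irreducible $C$; the last family gives no constraint (always $\geq a > 0$), $D^2 > 0$ gives $a > e/2$, and $D \cdot \Theta > 0$ gives $a > e$, which subsumes $a > e/2$. Hence the condition is exactly $a > e = -\Theta^2$, as claimed. The main obstacle — and the only genuinely subtle point — is making the curve-by-curve analysis watertight: one must use that the fibers are geometrically integral (so there are no reducible fiber components to worry about), and one must be careful that the \emph{only} irreducible curve meeting $\Theta$ in zero is a curve disjoint from $\Theta$, for which $f \cdot C \geq 1$ still rescues positivity. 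For the converse direction it is cleanest to note that $\Theta \cdot (\Theta + af) = a - e$, so if $a \leq -\Theta^2$ then $\Theta + af$ has non-positive intersection with the curve $\Theta$ and hence is not ample. This gives both implications.
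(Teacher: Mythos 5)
Your proof is correct and follows essentially the same route as the paper: Nakai--Moishezon, the observation that every irreducible curve $C \neq \Theta$ satisfies $(\Theta + af)\cdot C > 0$ automatically (splitting into vertical and horizontal cases), and the binding constraint $(\Theta+af)\cdot\Theta = a + \Theta^2 > 0$. Your explicit appeal to the geometric integrality of the fibers to rule out problematic vertical components is the same fact the paper uses implicitly when it asserts $\Theta\cdot C > 0$ for vertical curves.
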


\begin{proof}
By the Nakai-Moishsezon criterion, $\Theta + af$ is ample if and only if $(\Theta + af)^2=2a-e>0$ and $(\Theta + af)C>0$ for every irreducible curve $C \subset X$.

Suppose $C$ is an irreducible curve on $X$ that is distinct from $\Theta$.  Then $\Theta C\geq 0$ and $fC \geq 0$.  If $fC=0$, then $C$ is a vertical divisor, in which case $\Theta C>0$ and $(\Theta +af)C>0$.  If $fC>0$, then $(\Theta +af)C>0$ as well.

Now suppose  $C=\Theta$.  Then $(\Theta + af)C=a+\Theta^2$.  Since $a>0$, it now follows that $\Theta + af$ is ample if and only if $a>-\Theta^2$.
\end{proof}

\begin{eg}\label{eg:K3nefdivBGineq}
Let $X$ be a Weierstra{\ss} elliptic K3 surface (i.e.\ $e=2$).  If we take the nef divisors
 \[
  H = \Theta + ef, \text{\quad} H' = f
\]
in Proposition \ref{prop:K3surfstrongerBGdeform}, then condition (i) is clearly satisfied since $H$ is integral, while condition (ii') in Corollary \ref{cor:K3surfstrongerBGdeform} is satisfied by Lemma \ref{lem:AG50-132-1}.  Therefore, by Corollary \ref{cor:K3surfstrongerBGdeform}, the sharper Bogomolov-Gieseker inequality \eqref{eq:K3surfstrongerBG} holds for $\mu_{\Theta+ef}$-stable torsion-free sheaves on $X$ even though $\Theta+ef$ is a nef divisor.
\end{eg}

\section{Four types of weak stability conditions} \label{sec:fourtypesofweakstab}
In this section, we introduce four natural classes of weak stability conditions. % which we will use to answer Question \ref{que:main-1} for the case $D_\alpha\geq 0$. 
Let $p : X \to Y=\mathbb{P}^1$ be a Weierstra{\ss} elliptic K3 surface throughout this section, so $e = -\Theta^2=2$.  Let $i : \mathbb{P}^1 \hookrightarrow X$ denote the canonical section. 

\paragraph[Weak stability condition at the origin]
	\label{section:weakstaborigin}   We consider the usual central charge (with zero $B$-field) as
\[
Z_\omega = -\ch_2 + V_\omega \ch_0 + i \omega \ch_1
\]
where $\omega = R_\omega (\Theta + (D_\omega + e)f)$.  Note that according to our definitions of $R_\omega, D_\omega, V_\omega$ for $\omega$ ample, we have
\[
  V_\omega = \tfrac{\omega^2}{2}, \text{\quad} R_\omega = \sqrt{\frac{V_\omega}{D_\omega + \tfrac{e}{2}}}.
\]

%Up to an action of an element $g\in\text{GL}^+(2, \mathbb{R})$, 
Rescaling the imaginary part of $Z_\omega$, we obtain the central charge 
\begin{equation}
\label{Equ:CentralChargeVD}
Z_{V_\omega, D_\omega}(E)=-\ch_2(E)+V_\omega\ch_0(E)+i(\Theta+(D_\omega+e)f)\cdot \ch_1(E).
\end{equation}
% We consider the limit of $Z_\omega$ when $D_\omega, V_\omega \to 0$ to be
We consider the limit of $Z_{V_\omega, D_\omega}$ when $D_\omega, V_\omega \to 0$
 \[
   Z_{H} = -\ch_2 + i H \ch_1
 \]
 where $H = \Theta + ef$.  We will now construct a heart $\Bc_{H,k}^0$ on which $Z_H$ is a weak stability function.  (We defer the verification of the HN property to the end of this section.)

  Recall that for a proper morphism $f : X \to Y$ on smooth varieties, the Grothendieck-Riemann-Roch (GRR) theorem  says
 \[
   \ch( f_\ast \alpha).\mathrm{td}(T_Y) = f_\ast (\ch(\alpha).\mathrm{td}(T_X))
 \]
 for any $\alpha \in K(X)$.  Note that
 \[
  \mathrm{td}(T_{\mathbb{P}^1}) = 1 + \tfrac{1}{2}c_1(T_{\mathbb{P}^1}) = 1 + [pt]
 \]
from \cite[Example II8.20.1]{Harts}, while $\mathrm{td}(T_X) = (1,0, 2[pt])$ since $X$ is a K3 surface.  Therefore, GRR gives
\[
  [X] \cap \left( \ch (i_\ast \OO_{\mathbb{P}^1}(m)).(1,0,2[pt]) \right) = i_\ast \left( (\ch  (\OO_{\mathbb{P}^1}(m)).(1,[pt])) \cap [Y] \right)
\]
from which we obtain
\begin{align*}
  \ch_0(i_\ast \OO_{\mathbb{P}^1}(m)) &= 0\\
  \ch_1(i_\ast \OO_{\mathbb{P}^1}(m)) &= i_\ast [Y] = \Theta \\
  \ch_2(i_\ast \OO_{\mathbb{P}^1}(m)) &= (m+1)[pt]
\end{align*}

Proceeding as in Tramel-Xia \cite{tramel2017bridgeland}, we set
\begin{align*}
  \Tc^a_H &= \langle E \in \Coh (X) : E \text{ is $\mu_H$-semistable}, \mu_{H, \mathrm{min}}(E)>a \rangle \\
  \Fc^a_H &= \langle E \in \Coh (X) : E \text{ is $\mu_H$-semistable}, \mu_{H, \mathrm{max}}(E) \leq a \rangle.
\end{align*}
Even though $H$ is a nef divisor that is not ample, it is a movable class on $X$, and so the notions  $\mu_{H,\mathrm{min}}$ and $\mu_{H, \mathrm{max}}$ make sense \cite[Corollary 2.26]{greb2016movable}. Let 
\[
\A^a_H=\langle \F^a_H[1], \T^a_H\rangle.
\]
Next, for any integer $k$ we set
\[
  \Fc^a_{H,k} = \langle \OO_\Theta (i) : i \leq k \rangle
\]
where $\OO_\Theta (i)$ denotes $i_\ast \OO_{\mathbb{P}^1}(i)$, and set
\[
  \Tc^a_{H,k} = \{ E \in \Ac^a_{H} : \Hom (E, \OO_\Theta (i)) = 0 \text{ for all } i \leq k \}.
\]
From \cite[Lemma 3.2]{tramel2017bridgeland}, we know $(\Tc^a_{H,k}, \Fc^a_{H,k})$ is a torsion pair in $\Ac^a_{H}$, allowing us to perform a tilt to obtain the heart
\[
  \Bc^a_{H,k} = \langle \Fc^a_{H,k}[1], \Tc^a_{H,k} \rangle.
\]

\begin{lem}\label{lem:AG51-114}
Let $X$ be a Weierstra{\ss} elliptic surface with canonical section $\Theta$ and $e=-\Theta^2>0$.  Suppose $C \subseteq X$ is an irreducible curve not contained in $\Theta$.  Then $C.(\Theta + ef)>0$.
\end{lem}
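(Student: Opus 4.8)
The plan is to dichotomize on the intersection number $f\cdot C$ of $C$ with a fiber of $p$. Since $f$ is the class of a fiber it is nef, so $f\cdot C\geq 0$; as $f\cdot C$ is an integer, either $f\cdot C=0$ or $f\cdot C\geq 1$. Moreover, because $C$ is irreducible and, by hypothesis, distinct from the irreducible curve $\Theta$, the two curves meet properly and $\Theta\cdot C\geq 0$. Expanding $C\cdot(\Theta+ef)=\Theta\cdot C+e\,(f\cdot C)$, these two positivity facts already dispose of the generic situation.

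If $f\cdot C\geq 1$: since $e=-\Theta^2>0$ we get $C\cdot(\Theta+ef)=\Theta\cdot C+e\,(f\cdot C)\geq 0+e>0$, and we are done. The remaining case is $f\cdot C=0$. Then $C$ cannot dominate $Y$ under $p$ — otherwise $p|_C$ would be a finite surjection onto $\mathbb{P}^1$ of positive degree, forcing $f\cdot C>0$ — so $p(C)$ is a point and $C$ is contained in a single fiber $F$ of $p$. The Weierstra{\ss} hypothesis guarantees that the fibers of $p$ are geometrically integral, hence $F$ is reduced and irreducible; an irreducible curve contained in an integral curve must coincide with it, so $C=F$ and $[C]=f$ numerically. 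Therefore $C\cdot(\Theta+ef)=f\cdot\Theta+ef^2=1+0=1>0$, using that $\Theta$ is a section (so $\Theta\cdot f=1$) and $f^2=0$.

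The only step requiring genuine care is the vertical case $f\cdot C=0$: one must invoke the integrality of the fibers built into the definition of a Weierstra{\ss} elliptic surface in order to conclude that $C$ is numerically a whole fiber rather than a proper subcurve of a possibly reducible one — for a general elliptic surface with reducible fibers this is precisely where such an argument could break down. Once $[C]=f$ is established the conclusion is immediate, so I do not foresee any obstacle beyond this bookkeeping.
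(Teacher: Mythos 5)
Your proposal is correct and takes essentially the same route as the paper: the same dichotomy between horizontal curves (where $\Theta\cdot C\geq 0$ and $f\cdot C>0$ give the result since $e>0$) and vertical curves, except that you argue $f\cdot C>0$ directly via dominance of the base where the paper cites \cite[Lemma 3.15]{Lo11}, and you make explicit the use of geometric integrality of the fibers to get $[C]=f$ and hence $\Theta\cdot C=1$, which the paper simply asserts as $\Theta\cdot C>0$. (Minor cosmetic point: the base $Y$ need not be $\mathbb{P}^1$ in this lemma, but your argument applies verbatim to any smooth base curve.)
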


\begin{proof}
If $C$ is a vertical divisor, then $C.f=0$ and $C.\Theta >0$, in which case the lemma follows.

Suppose $C$ is a horizontal divisor not contained in $\Theta$.  Then $C.\Theta \geq 0$, while $C.f>0$ by \cite[Lemma 3.15]{Lo11}, in which case the lemma also follows.
\end{proof}

We also know that the Bogomolov-Gieseker inequality holds for slope semistable sheaves with respect to a movable class on a surface: When $X$ is a smooth projective surface and $\alpha$ is a nonzero movable class on $X$ (e.g.\ when $\alpha$ is nef), for any $\mu_\alpha$-semistable torsion-free sheaf $E$ we have the usual Bogomolov-Gieseker inequality
\begin{equation}\label{eq:HIT-surf-mov}
  \ch_2(E) \leq \frac{\ch_1(E)^2}{2\ch_0(E)},
\end{equation}
where the inequality is strict if $E$ is not locally free \cite[Theorem 5.1]{greb2016movable}.

\begin{prop} \label{prop:weakstabfuncbefore}
Let $X$ be a Weierstra{\ss} elliptic K3 surface.  Then $Z_H$ is a weak stability function on the heart $\Bc^0_{H,k}$ for $k=-1, -2$.
\end{prop}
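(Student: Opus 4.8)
The plan is to check directly the two requirements for $Z_H$ to be a weak stability function on $\Bc^0_{H,k}$: that $Z_H(E)\in\HH\cup\RR_{\leq 0}$ for every $E\in\Bc^0_{H,k}$. Since $(\Tc^0_{H,k},\Fc^0_{H,k})$ is a torsion pair in $\A^0_H$, every $E\in\Bc^0_{H,k}$ sits in a short exact sequence $0\to F[1]\to E\to T\to 0$ with $F\in\Fc^0_{H,k}$ and $T\in\Tc^0_{H,k}$, and by additivity of $Z_H$ it suffices to understand $Z_H$ on these two pieces. On the shifted summand, the GRR computation $\ch(\OO_\Theta(i))=(0,\Theta,i+1)$ gives $Z_H(\OO_\Theta(i)[1])=i+1$, which is real and $\leq 0$ precisely because $i\leq k\leq -1$; hence $\Im Z_H(F[1])=0$ and $\Re Z_H(F[1])=\ch_2(F)\leq 0$ for every $F\in\Fc^0_{H,k}$. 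For the other summand I would first record that $H\ch_1(A)\geq 0$ for every $A\in\A^0_H$: from the short exact sequence $0\to H^{-1}(A)[1]\to A\to H^0(A)\to 0$ with cohomology sheaves $H^0(A)\in\Tc^0_H$ and $H^{-1}(A)\in\Fc^0_H$, the $\mu_H$-semistable torsion-free factors of $H^0(A)$ have positive slope and its torsion factors have $H\ch_1\geq 0$ (as $H$ is nef), while $H^{-1}(A)$ has $H\ch_1\leq 0$. Combining, $\Im Z_H(E)=H\ch_1(T)\geq 0$, so $Z_H$ already maps $\Bc^0_{H,k}$ into $\HH\cup\RR$.

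It remains to treat $E\in\Bc^0_{H,k}$ with $\Im Z_H(E)=0$, i.e.\ $H\ch_1(T)=0$; since $\Re Z_H(E)=\ch_2(F)-\ch_2(T)\leq -\ch_2(T)$, the goal becomes $\ch_2(T)\geq 0$. From $H\ch_1(T)=0$ and the previous paragraph, both $H\ch_1(H^0(T))$ and $H\ch_1(H^{-1}(T))$ vanish; write $G:=H^{-1}(T)\in\Fc^0_H$ and $Q:=H^0(T)\in\Tc^0_H$, so $\ch_2(T)=\ch_2(Q)-\ch_2(G)$. Every $\mu_H$-semistable torsion-free factor of $G$ must then have slope $0$, so by the Bogomolov-Gieseker inequality \eqref{eq:HIT-surf-mov} together with the Hodge index theorem (such a factor has $\ch_1$ orthogonal to $H$ and $H^2=e>0$, so $\ch_1^2\leq 0$) each has $\ch_2\leq 0$; hence $\ch_2(G)\leq 0$ and $\ch_2(T)\geq\ch_2(Q)$. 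On the other hand, applying $\Hom(-,\OO_\Theta(i))$ to $0\to G[1]\to T\to Q\to 0$ and using $T\in\Tc^0_{H,k}$ gives $\Hom(Q,\OO_\Theta(i))=0$ for all $i\leq k$; moreover $Q$, lying in $\Tc^0_H$ with $H\ch_1(Q)=0$, must be a torsion sheaf (a nonzero torsion-free quotient would force $H\ch_1(Q)>0$), and passing to its maximal pure one-dimensional quotient does not increase $\ch_2$ and preserves the vanishing of $\Hom(-,\OO_\Theta(i))$, so we may assume $Q$ is pure of dimension one; then $H\ch_1(Q)=0$ and Lemma \ref{lem:AG51-114} (every irreducible curve $\neq\Theta$ meets $H$ positively) force $\supp Q\subseteq\Theta$. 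So the proposition reduces to the following claim.

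\emph{Claim.} If $Q$ is a pure one-dimensional sheaf with $\supp Q\subseteq\Theta$ and $\Hom(Q,\OO_\Theta(i))=0$ for all $i\leq k$, then $\ch_2(Q)\geq 0$, provided $k\in\{-1,-2\}$. This is the step I expect to be the main obstacle, and I would prove it by filtering $Q$ along powers of $I:=I_\Theta$. Since $\supp Q\subseteq\Theta$ we have $I^NQ=0$ for $N\gg 0$; the subquotients $Q_j:=I^jQ/I^{j+1}Q$ are coherent sheaves on $\Theta\cong\PP^1$, and $\ch_2(Q)=\sum_j\ch_2(i_\ast Q_j)$ in $K(X)$. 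Because $I/I^2=\OO_\Theta(-\Theta)$ has degree $-\Theta^2=e=2$, we get $I^j/I^{j+1}\cong\OO_{\PP^1}(2j)$, and multiplication yields a surjection $Q_0\otimes\OO_{\PP^1}(2j)\twoheadrightarrow Q_j$ with $Q_0=Q/IQ$. Since any $\OO_X$-linear map from $Q$ to the $\OO_\Theta$-module $\OO_\Theta(i)$ factors through $Q_0$, the hypothesis rewrites as $\Hom_{\PP^1}(Q_0,\OO_{\PP^1}(i))=0$ for $i\leq k$, which forces every line-bundle summand $\OO_{\PP^1}(a)$ of $Q_0$ to satisfy $a\geq k+1$; and since a torsion-free quotient of $\bigoplus_l\OO_{\PP^1}(b_l)$ has every summand of degree $\geq\min_l b_l$, every line-bundle summand $\OO_{\PP^1}(c)$ of $Q_j$ satisfies $c\geq 2j+k+1$. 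Using $\ch_2(i_\ast\OO_{\PP^1}(c))=c+1$ and that zero-dimensional summands contribute nonnegatively to $\ch_2$, each line-bundle summand of $i_\ast Q_j$ contributes $c+1\geq 2j+k+2$, which is $\geq 0$ once $k\geq -2$; hence $\ch_2(i_\ast Q_j)\geq 0$ and, summing over $j$, $\ch_2(Q)\geq 0$.

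Assembling the pieces, for $k\in\{-1,-2\}$ we obtain $\Im Z_H(E)\geq 0$ for all $E\in\Bc^0_{H,k}$, and whenever $\Im Z_H(E)=0$ we obtain $\Re Z_H(E)=\ch_2(F)-\ch_2(T)\leq 0$, which is exactly the assertion that $Z_H$ is a weak stability function. The hypothesis $k=-1,-2$ is used precisely twice: one needs $i+1\leq 0$ for every $i\leq k$ (forcing $k\leq -1$) and $2j+k+2\geq 0$ for every $j\geq 0$ (forcing $k\geq -2$).
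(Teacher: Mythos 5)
Your proof is correct, and its skeleton matches the paper's: split $E\in\Bc^0_{H,k}$ via the torsion pair $(\Tc^0_{H,k},\Fc^0_{H,k})$, note $Z_H$ is real and nonpositive on $\Fc^0_{H,k}[1]$, get $\Im Z_H\geq 0$ from nefness of $H$, and when $\Im Z_H=0$ control the slope-zero torsion-free part by the Bogomolov--Gieseker inequality for movable classes plus the Hodge index theorem (the paper uses exactly these two inputs), leaving the torsion piece supported on $\Theta$ as the crux. Where you diverge is in how that crux is handled: the paper simply asserts that a $1$-dimensional sheaf supported on $\Theta$ with $\Hom(-,\OO_\Theta(i))=0$ for $i\leq k$ lies in $\langle\OO_\Theta(i):i>k\rangle$ and hence has $\ch_2\geq 0$, whereas you prove the inequality $\ch_2(Q)\geq 0$ directly by filtering $Q$ by powers of $I_\Theta$, using $I/I^2\cong\OO_{\PP^1}(2)$ and the surjections $Q_0\otimes\OO_{\PP^1}(2j)\twoheadrightarrow Q_j$ to bound the degrees of the line-bundle summands of each graded piece from below by $2j+k+1$. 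This is a genuine improvement in rigor: for sheaves scheme-theoretically supported on thickenings of $\Theta$ the paper's extension-closure claim is delicate (the graded pieces $Q_j$ can acquire $0$-dimensional torsion even when $Q$ is pure, so a filtration with factors only of the form $\OO_\Theta(i)$, $i>k$, is not obviously available), while your argument sidesteps this by only tracking $\ch_2$, which is all that is needed; it also makes transparent exactly where $k\geq -2$ and $k\leq -1$ enter. The trade-off is purely expository: the paper's one-line claim is shorter, but your conormal-bundle filtration supplies the justification the paper leaves implicit.
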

\begin{proof}
Take any $E \in \Bc^0_{H,k}$. It suffices to show that $Z_H (E) \in \mathbb{H}_0$ for either $E \in \Tc^0_{H,k}$ or $E \in \Fc^0_{H,k}$, when $k=-1, -2$.

Suppose $E \in \Tc^0_{H,k}$.  Then $E \in \Ac^0_H$, and it is easy to see that $Z_H(E) \in \mathbb{H}$ when we are in one of the following cases:
\begin{itemize}
\item $E \in \Tc^0_H$ and $\ch_0(E)>0$.
\item $E \in \Tc^0_H$ is supported in dimension 1, and its support is not contained in $\Theta$ (use Lemma \ref{lem:AG51-114}).
\item $E\in \Tc^0_H$ is supported in dimension 0.
\item $E = F[1]$ where $F$ is a $\mu_H$-semistable torsion-free sheaf with $\mu_H(F)<0$.
\end{itemize}
When $E = F[1]$ where $F$ is a $\mu_H$-semistable torsion-free sheaf with $\mu_H(F)=0$, we have
\begin{align*}
  Z_H (E) &= Z_H(F[1]) \\
  &= \ch_2 (F) \\
  &\leq \frac{\ch_1(F)^2}{2\ch_0(F)} \text{ by the Bogomolov-Gieseker inequality for $\mu_H$-semistability \cite[Theorem 5.1]{greb2016movable}} \\
  &\leq 0 \text{ by the Hodge Index Theorem \cite[Theorem 7.14]{McKnotes}}.
\end{align*}
Note that the Hodge Index Theorem applies in the last step because $H^2 >0$.  Also, if $E$ is a 1-dimensional sheaf supported on $\Theta$, then because $\Hom (E, \OO_\Theta (i))=0$ for all $i\leq k$, it follows that $E \in \langle \OO_\Theta (i) : i > k \rangle$, which gives $Z_H(E) \in \mathbb{R}_{\leq 0}$.  Since every object in $\Tc^0_{H,k}$ is an extension of objects of the forms above, we have $Z_H(\Tc^0_{H,K})\subseteq \mathbb{H}_0$.

Lastly, if $E \in \Fc^0_{H,k}$, then $E \in \langle \OO_\Theta (i) : i \leq k \rangle [1]$, in which case we also have $Z_H(E) \in \mathbb{H}_0$.
\end{proof}

\begin{prop}\label{prop:B0HkcapkerZH}
Let $X$ be a Weierstra{\ss} elliptic K3 surface.  Then
\[
  \Bc^0_{H,k} \cap \kernel Z_H = \begin{cases}
  \langle  \OO_\Theta (-1) [1], \OO_X [1] \rangle &\text{ if $k=-1$} \\
  \langle \OO_X [1], \OO_\Theta (-1)\rangle &\text{ if $k=-2$}
  \end{cases}.
\]
\end{prop}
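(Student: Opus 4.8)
The plan is to compute the kernel of $Z_H$ inside $\Bc^0_{H,k}$ directly, by first identifying $\kernel Z_H \cap \Ac^0_H$ and then tilting. Since $Z_H = -\ch_2 + iH\ch_1$, an object $E$ lies in $\kernel Z_H$ exactly when $H\ch_1(E) = 0$ and $\ch_2(E) = 0$. First I would analyze which objects of $\Ac^0_H = \langle \Fc^0_H[1], \Tc^0_H\rangle$ satisfy these two equations. The condition $H\ch_1 = 0$ forces any torsion-free sheaf appearing to be $\mu_H$-semistable of slope zero (by the description of $\Tc^0_H$ and $\Fc^0_H$ and the weak see-saw/HN structure), and any one-dimensional torsion subsheaf to be supported on $\Theta$ (using Lemma \ref{lem:AG51-114}, since $HC > 0$ for any irreducible $C$ not in $\Theta$); zero-dimensional sheaves are killed by $H\ch_1$ automatically but have $\ch_2 > 0$, so they cannot occur. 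For a $\mu_H$-semistable torsion-free sheaf $F$ of slope $0$ appearing as $F[1]$, the Bogomolov–Gieseker inequality \eqref{eq:HIT-surf-mov} together with the Hodge Index Theorem gives $\ch_2(F) \le \ch_1(F)^2/(2\ch_0(F)) \le 0$, with equality in the first only if $F$ is locally free and in the second only if $\ch_1(F)$ is numerically proportional to $H$; combined with $H\ch_1(F) = 0$ this forces $\ch_1(F) \equiv 0$ numerically, hence $F$ is a $\mu_H$-semistable locally free sheaf with $\ch_1 = 0$, $\ch_2 = 0$. On a K3 surface, Bogomolov–Gieseker is actually an equality ($\ch_2 = -\ch_0$ would violate it unless $\ch_0$... ) — more carefully, a slope-semistable locally free sheaf with $c_1 = 0$ and $c_2 = 0$ is a direct sum of copies of $\OO_X$ (its discriminant vanishes, so it is a flat bundle, and $X$ is simply connected), so $F \cong \OO_X^{\oplus n}$.

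Next I would assemble these pieces. An object $E \in \Ac^0_H \cap \kernel Z_H$ sits in a short exact sequence $0 \to F[1] \to E \to T \to 0$ in $\Ac^0_H$ with $F \in \Fc^0_H$ and $T \in \Tc^0_H$; applying $Z_H$ and using that $Z_H$ takes values in $\mathbb{H}_0$ on both $\Tc^0_H$ and $\Fc^0_H[1]$ shows both $F[1]$ and $T$ lie in $\kernel Z_H$. So $F \cong \OO_X^{\oplus a}$ from the previous paragraph (after checking $\OO_X \in \Fc^0_H$, i.e.\ $\mu_H(\OO_X) = 0 \le 0$), while $T \in \Tc^0_H \cap \kernel Z_H$ is a $\mu_H$-semistable sheaf with $H\ch_1 = 0$ hence $\mu_{H,\min} > 0$ forces... wait — $T \in \Tc^0_H$ means $\mu_{H,\min}(T) > 0$, which contradicts $H\ch_1(T) = 0$ unless $T$ is torsion; and a torsion sheaf in $\kernel Z_H$ supported on $\Theta$ with $\ch_2 = 0$ is an extension of copies of $\OO_\Theta(i)$ with total $\ch_2 = \sum(i+1) = 0$. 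Thus $\kernel Z_H \cap \Ac^0_H = \langle \OO_X[1], \, \OO_\Theta(i) : i \in \mathbb{Z}, \text{ extensions with } \sum(i_j+1)=0 \rangle$ — the relevant building blocks being $\OO_X[1]$ and the $\OO_\Theta(i)$'s whose twisted Euler characteristics cancel; the simplest such is $\OO_\Theta(-1)$ itself (with $\ch_2 = 0$).

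Finally I would pass to the tilt $\Bc^0_{H,k} = \langle \Fc^0_{H,k}[1], \Tc^0_{H,k}\rangle$. Here $\Fc^0_{H,k} = \langle \OO_\Theta(i) : i \le k\rangle$ and $\Tc^0_{H,k} = \{E \in \Ac^0_H : \Hom(E, \OO_\Theta(i)) = 0 \ \forall i \le k\}$. An object of $\kernel Z_H$ in $\Bc^0_{H,k}$ fits in $0 \to G[1] \to E \to H' \to 0$ with $G \in \Fc^0_{H,k}$, $H' \in \Tc^0_{H,k}$, and again both factors lie in $\kernel Z_H$. For $k = -1$: $G$ is an extension of $\OO_\Theta(i)$, $i \le -1$, in $\kernel Z_H$, which (since each has $\ch_2 = i+1 \le 0$ and they must sum to zero) forces $G \cong \OO_\Theta(-1)^{\oplus b}$ (only $i = -1$ gives $\ch_2 = 0$; more negative twists force positive partners that aren't available); meanwhile $H' \in \Tc^0_{H,k} \cap \kernel Z_H$ must be built from $\OO_X$'s and $\OO_\Theta(i)$ with $i > k = -1$, i.e.\ $i \ge 0$, again with $\ch_2$ summing to zero, which since $i+1 \ge 1 > 0$ forces no $\OO_\Theta$ part, so $H' \cong \OO_X^{\oplus a}$. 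Hence $\Bc^0_{H,-1} \cap \kernel Z_H = \langle \OO_\Theta(-1)[1], \OO_X[1]\rangle$. For $k = -2$: now $\OO_\Theta(-1)$ has $\Hom(\OO_\Theta(-1), \OO_\Theta(i)) = 0$ for all $i \le -2$, so $\OO_\Theta(-1) \in \Tc^0_{H,-2}$ rather than $\Fc^0_{H,-2}$; the same bookkeeping gives $G \in \Fc^0_{H,-2} \cap \kernel Z_H$ built from $\OO_\Theta(i)$, $i \le -2$, with $\ch_2$ summing to zero — but each such has $\ch_2 = i+1 \le -1 < 0$, forcing $G = 0$; and $H' \in \Tc^0_{H,-2} \cap \kernel Z_H$ is built from $\OO_X[1]$... no wait, $\OO_X[1] \notin \Tc^0_{H,-2}$ — I need to re-examine whether $\OO_X[1]$ or $\OO_X$ is the object; $\OO_X \in \Fc^0_H$ so $\OO_X[1] \in \Ac^0_H$, and $\Hom(\OO_X[1], \OO_\Theta(i)) = \Ext^{-1}(\OO_X, \OO_\Theta(i)) = 0$, so $\OO_X[1] \in \Tc^0_{H,-2} \subseteq \Bc^0_{H,-2}$ sitting in degree... as an object of $\Bc^0_{H,-2}$ it is just $\OO_X[1]$, and separately $\OO_\Theta(-1) \in \Tc^0_{H,-2}$ is an honest object of $\Bc^0_{H,-2}$. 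So $\Bc^0_{H,-2} \cap \kernel Z_H = \langle \OO_X[1], \OO_\Theta(-1)\rangle$, as claimed.

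\textbf{Main obstacle.} The delicate point is the step showing that a $\mu_H$-semistable torsion-free sheaf with $\ch_1$ numerically trivial and $\ch_2 = 0$ must be a trivial bundle — i.e.\ upgrading the numerical vanishing coming from Bogomolov–Gieseker plus Hodge Index to the genuine isomorphism $F \cong \OO_X^{\oplus n}$. This requires knowing that such an $F$ is Gieseker-polystable with vanishing discriminant (hence locally free and, by simple-connectedness of the K3, a sum of trivial line bundles, using that $H$ is only nef — so one must invoke the movable-cone version of Bogomolov–Gieseker, Theorem 5.1 of \cite{greb2016movable}, and its equality case). Handling the nef-not-ample subtlety of $H = \Theta + ef$ throughout — in particular that $\mu_{H,\min}, \mu_{H,\max}$ behave well and that the equality case of BG still forces local-freeness — is the part needing the most care; everything else is bookkeeping with $\ch_2$ of the $\OO_\Theta(i)$'s and the standard torsion-pair yoga.
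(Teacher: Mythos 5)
Your overall skeleton (decompose with respect to the torsion pairs, use positivity of $Z_H$ on each piece, bookkeep $\ch_2$ of the $\OO_\Theta(i)$'s, and reduce everything to a slope-zero torsion-free piece) is the same as the paper's, and your final answers are correct, but the key step has a genuine gap. You must show that a torsion-free sheaf $M$ which is $\mu_H$-semistable with $H\ch_1(M)=0$ and $\ch_2(M)=0$ lies in $\langle\OO_X\rangle$, and your argument for this (``polystable with vanishing discriminant, hence flat, hence trivial since $X$ is simply connected'') is only available for (semi)stability with respect to an ample, i.e.\ K\"ahler, class. Here $H=\Theta+ef$ is nef and not ample, a $\mu_H$-semistable sheaf need not be semistable for any nearby ample class, and the repair you propose --- the equality case of the movable-class Bogomolov--Gieseker inequality of Greb--Kebekus--Peternell --- only gives strictness for non-locally-free sheaves, i.e.\ local freeness in the equality case, not flatness or any rank bound. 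The paper bridges exactly this point with the openness-of-stability result, Proposition \ref{prop:K3surfstrongerBGdeform} (via Corollary \ref{cor:K3surfstrongerBGdeform} and Example \ref{eg:K3nefdivBGineq}): the $\mu_H$-Jordan--H\"older factors $M_i$ of $M$ are $\mu_{H+\epsilon f}$-stable for small $\epsilon>0$ with $H+\epsilon f$ ample, so the sharper K3 inequality \eqref{eq:K3surfstrongerBG} applies and, with the Hodge Index Theorem, forces $\ch_0(M_i)=1$, $\ch_2(M_i)=0$, $c_1(M_i)=0$, hence $M_i\cong\OO_X$. Your route could be repaired the same way (first transfer stability of the JH factors to an ample class, then run either the flatness argument or the sharper inequality), but some such transfer is indispensable and is missing from your proposal.

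A second, reparable, problem is your intermediate description of $\Ac^0_H\cap\kernel Z_H$: it is not true at that stage that zero-dimensional subsheaves ``cannot occur,'' nor that the torsion-free part must have $\ch_2=0$. The kernel condition only forces $\ch_2$ of the degree-$0$ sheaf cohomology to equal $\ch_2$ of the degree-$(-1)$ part, which is merely $\leq 0$; a zero-dimensional piece can be offset by twists $\OO_\Theta(i)$ with $i\leq -2$, e.g.\ $\OO_\Theta(-5)\oplus T_0$ with $T_0$ zero-dimensional of length $4$ lies in $\Ac^0_H\cap\kernel Z_H$. The sign argument only becomes valid after imposing the conditions $\Hom(-,\OO_\Theta(i))=0$ for $i\leq k$ defining $\Tc^0_{H,k}$, which is where the paper runs it: the torsion part is then an extension of a sheaf in $\langle\OO_\Theta(i):i>k\rangle$ by a zero-dimensional sheaf, so its $\ch_2$ is nonnegative, and equality with the nonpositive $\ch_2$ of the torsion-free part forces both to vanish, killing the zero-dimensional piece and (for $k=-1$) the $\Theta$-part. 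Your final bookkeeping silently relies on the flawed $\Ac^0_H$-stage classification; it happens to land on the right answer once the Hom-vanishing is in place, but the argument should be reorganized so that the positivity of $\ch_2$ on the torsion side is derived inside $\Bc^0_{H,k}$, as in the paper.
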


\begin{proof}
For convenience, we will write $\Ac$ and $\Bc$ to denote the hearts $\Ac^0_H$ and $\Bc^0_{H,k}$, respectively, in this proof.

Take any nonzero object $E \in \Bc^0_{H,k} \cap \kernel Z_H$.  Then $Z_H (\Hc^{-1}_\Ac (E))=0=Z_H(\Hc^0_\Ac (E))$.   In particular, we have $\ch_2 (\Hc^{-1}_\Ac (E))=0$.  Since $\Hc^{-1}_\Ac (E) \in \langle \OO_\Theta (i) : i \leq k \rangle$, this means that
\[
  \Hc^{-1}_\Ac (E) \in \begin{cases}
  \langle \OO_\Theta (-1) \rangle &\text{ if $k=-1$} \\
  \{0\} &\text{ if $k=-2$}
  \end{cases}.
\]
Also, since $\Im Z_H (\Hc^0_\Ac (E))=0$ and $\Im Z_H$ is nonnegative on $\Ac$, it follows that $\Im Z_H (H^0(\Hc^0_\Ac (E)))$ and $\Im Z_H (H^{-1}(\Hc^0_\Ac (E)))$ are both zero.  As a result,  the sheaf $H^0(\Hc^0_\Ac (E))$ must be a torsion sheaf  supported on $\Theta$ by Lemma \ref{lem:AG51-114}, while $M := H^{-1}(\Hc^0_\Ac (E))$ is a $\mu_H$-semistable torsion-free sheaf with $\mu_H=0$.

Since $H$ is a nef divisor,  the usual Bogomolov-Gieseker inequality holds for  $\mu_H$-semistable sheaves \cite[Theorem 5.1]{greb2016movable}
\[
  \ch_2 (M) \leq \frac{c_1(M)^2}{2\ch_0(M)}.
\]
Also, since $H$ is nef with $H^2=e>0$, the Hodge Index Theorem gives $c_1(M)^2 \leq 0$, and so we have $\ch_2(M) \leq 0$.  On the other hand, by the construction of $\Bc$, we have
\[
  \Hom (H^0(\Hc^0_\Ac (E)), \OO_\Theta (i)) =0 \text{ for any $i \leq k$}
\]
which means that $H^0(\Hc^0_\Ac (E))$ is an extension of a sheaf in $\langle \OO_\Theta (i) : i > k \rangle$ by a 0-dimensional sheaf, giving us $\ch_2 (H^0 (\Hc^0_\Ac (E)))> k+1 \geq 0$ (recall $k$ is either $-1$ or $-2$).    Since $\Re Z_H (\Hc^0_\Ac (E))=0$, this forces
\[
\ch_2 (M)=\ch_2 (H^0(\Hc^0_\Ac (E)))=0.
\]
In particular, $H^0(\Hc^0(\Ac (E)))$ is forced to be a pure 1-dimensional sheaf, and
\[
  H^0(\Hc^0_\Ac (E)) \in \begin{cases} \{0\} &\text{ if $k=-1$} \\
  \langle \OO_\Theta (-1)\rangle &\text{ if $k=-2$}
  \end{cases}.
\]

%That $\ch_2 (M)=0$ means equality holds in the Hodge Index Theorem, and so $c_1(M) \equiv 0$, which implies $c_1(M)=0$ since $X$ is a K3 surface.  Also, $M$ must be locally free, for otherwise
%\[
%  \ch_2 (M) = \ch_2(M^{\ast\ast})-\ch_2(M^{\ast\ast}/M) < \ch_2(M^{\ast\ast})\leq 0
%\]
%by the Bogomolov-Gieseker inequality, giving a contradiction.  That is, $M= H^{-1}(\Hc^0_\Ac (E))$ is a $\mu_H$-semistable locally free sheaf on $X$.

We now show that, in fact, $M$ lies in $\langle \OO_X \rangle$.  Let $M_i$ be the $\mu_H$-Jordan H\"{o}lder factors of $M$ (which exist by \cite[Corollary 2.27]{greb2016movable}).  By Example \ref{eg:K3nefdivBGineq}, each $M_i$ satisfies the sharper Bogomolov-Giesker inequality \eqref{prop:K3surfstrongerBG}.  Together with the Hodge Index Theorem, for each $i$ we have
\[
\ch_2(M_i) \leq -\ch_0(M_i) + \frac{1}{\ch_0(M_i)}
\]
where the right-hand side is strictly less than 0 if $\ch_0(M_i)>1$, while equal to 0 if $\ch_0(M_i)=0$.  Since $0=\ch_2(M)=\sum_i \ch_2(M_i)$, it follows that each $M_i$ is a rank-one torsion-free sheaf  with $\ch_2(M_i)=0$.  In fact, each $M_i$ must be a line bundle, for if $M_j$ is not locally free for some $j$, then
\begin{equation}\label{eq:AG51-174-1}
  \ch_2 (M_j) = \ch_2(M_j^{\ast\ast})-\ch_2(M_j^{\ast\ast}/M_j) < \ch_2(M_j^{\ast\ast})\leq  \frac{c_1(M_j^{\ast\ast})^2}{2\ch_0(M_j^{\ast\ast})} \leq 0
\end{equation}
where the second last inequality follows from the usual
 Bogomolov-Gieseker inequality, and the last inequality follows from the Hodge Index Theorem for $H$.  This contradicts  $\ch_2(M_j)=0$, and so each $M_i$ is a line bundle.  Moreover, equality holds in the Hodge Index Theorem in \eqref{eq:AG51-174-1}, and so $c_1(M_i)\equiv 0$; since $X$ is a K3 surface, this  implies $c_1(M_i)=0$ and hence $M_i \cong \OO_X$.  That is, $M \in\langle \OO_X \rangle$.

So far, we have shown that any object $E$ in $\Bc^0_{H,k} \cap \kernel Z_H$ fits in an exact triangle in $D^b(X)$
\[
E' \to E \to E'' \to E'[1]
\]
where
\begin{itemize}
\item $E' \in \langle \OO_\Theta (-1)[1]\rangle$ and $E'' \in \langle \OO_X[1]\rangle$  if $k=-1$;
\item $E' \in \langle \OO_X[1]\rangle$ and $E'' \in \langle \OO_\Theta (-1)\rangle$  if $k=-2$.
 \end{itemize}
 This proves the proposition. 
\end{proof}

We actually have that the objects in $\B_{H, -1}^{0}\cap \ker(Z_H)$ are direct sums of $\cO_\Theta(-1)[1]$ and $\cO_X[1]$, and objects in $\B_{H, -2}^{0}\cap \ker(Z_H)$ are direct sums of $\cO_\Theta(-1)$ and $\cO_X[1]$, as shown in the following lemma.

\begin{lem}\label{lem:AG51-174-1}
Let $X$ be a Weierstra{\ss} elliptic K3 surface.  Then
\[
  \Ext^1 (\OO_\Theta (-1), \OO_X[1]) = 0 = \Ext^1 (\OO_X[1], \OO_\Theta (-1)[1]),
\]
\[
\Ext^1(\cO_\Theta(-1), \cO_\Theta(-1))=0,
\]
and every object in $\Bc^0_{H,k} \cap \kernel Z_H$ is a direct sum of $\OO_\Theta (-1) [1]$ and $\OO_X [1]$ (resp.\ $\OO_X [1]$ and $\OO_\Theta (-1)$) if $k=-1$ (resp.\ $k=-2$).
\end{lem}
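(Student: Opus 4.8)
The plan is to establish the three vanishing statements first, and then assemble the direct-sum decomposition from Proposition~\ref{prop:B0HkcapkerZH} together with these vanishings. For the three $\Ext^1$ computations I would work directly with the known Chern characters $\ch(\OO_\Theta(-1)) = (0,\Theta,0)$ and $\ch(\OO_X) = (1,0,1)$ on the K3 surface $X$, using Serre duality (the canonical bundle is trivial, so $\Ext^i(A,B) \cong \Ext^{2-i}(B,A)^\vee$) and the Riemann--Roch/Euler-characteristic formula $\chi(A,B) = -\langle \ch(A), \ch(B)\rangle$ with the Mukai pairing. For $\Ext^1(\OO_\Theta(-1), \OO_X[1]) = \Ext^2(\OO_\Theta(-1),\OO_X)$: by Serre duality this is $\Hom(\OO_X, \OO_\Theta(-1))^\vee = H^0(X,\OO_\Theta(-1))^\vee$, and since $\OO_\Theta(-1) = i_\ast\OO_{\PP^1}(-1)$ has no global sections, this vanishes. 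For $\Ext^1(\OO_X[1], \OO_\Theta(-1)[1]) = \Ext^1(\OO_X, \OO_\Theta(-1)) = H^1(X, \OO_\Theta(-1)) = H^1(\PP^1, \OO_{\PP^1}(-1))$, which vanishes as well. For $\Ext^1(\OO_\Theta(-1), \OO_\Theta(-1))$: one computes $\chi = \Hom - \Ext^1 + \Ext^2$; by Serre duality $\Ext^2(\OO_\Theta(-1),\OO_\Theta(-1)) \cong \Hom(\OO_\Theta(-1),\OO_\Theta(-1))^\vee$, and $\Hom = \Cfield$ since $\OO_\Theta(-1)$ is simple (it is the pushforward of a line bundle on an integral curve). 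Since $\ch(\OO_\Theta(-1))^2$ under the Mukai pairing is $\Theta^2 = -2$, one gets $\chi = -(\Theta^2) = 2 = 1 - \Ext^1 + 1$, forcing $\Ext^1 = 0$. (Alternatively, deform-invariance or a direct \v{C}ech computation on the section works.)

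Next I would upgrade Proposition~\ref{prop:B0HkcapkerZH}. That proposition shows every $E \in \Bc^0_{H,k}\cap\kernel Z_H$ sits in a triangle $E' \to E \to E''$ with $E'$ a direct sum of copies of one of the two generating objects and $E''$ a direct sum of copies of the other. To promote this triangle to a splitting $E \cong E' \oplus E''$, it suffices to show the relevant $\Ext^1(E'', E') = 0$: in the $k=-1$ case this is $\Ext^1(\OO_X[1], \OO_\Theta(-1)[1])$, and in the $k=-2$ case it is $\Ext^1(\OO_\Theta(-1), \OO_X[1])$ — both of which are exactly the groups shown to vanish above, and these vanish also for arbitrary finite direct sums by additivity of $\Ext$. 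Hence $E \cong E' \oplus E''$. Finally, to see that $E'$ itself is a genuine direct sum of copies of the simple object (rather than a non-split iterated extension), I would note that $\Ext^1(\OO_\Theta(-1),\OO_\Theta(-1)) = 0$ kills self-extensions of $\OO_\Theta(-1)$, while $\OO_X[1]$ has $\Ext^1(\OO_X[1],\OO_X[1]) = \Ext^1(\OO_X,\OO_X) = H^1(X,\OO_X) = 0$ since $X$ is K3; so each of $E'$ and $E''$ is forced to be a direct sum of copies of its generator. Combining, $E$ is a direct sum of $\OO_\Theta(-1)[1]$ and $\OO_X[1]$ (if $k=-1$) or of $\OO_X[1]$ and $\OO_\Theta(-1)$ (if $k=-2$).

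The main obstacle, such as it is, is bookkeeping rather than conceptual: one must be careful about which object plays the role of sub and which of quotient in the triangle coming from Proposition~\ref{prop:B0HkcapkerZH} for each value of $k$, so as to invoke the vanishing of the correct $\Ext^1$ group (the "$E''\to E'[1]$" connecting map lands in $\Ext^1(E'',E')$, and one needs precisely that group, not its Serre dual, which need not vanish). A secondary point is to confirm that $\OO_\Theta(-1)$ is simple — this follows because $\Theta \cong \PP^1$ is integral and $i_\ast$ of a line bundle on an integral projective curve has endomorphism ring equal to the base field — and that all the cohomology computations on $X$ reduce via $i_\ast$ (and the projection formula/Leray) to cohomology of line bundles on $\PP^1$, which is completely standard.
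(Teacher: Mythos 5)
Your proposal is correct and follows essentially the same route as the paper: Serre duality plus a Riemann--Roch/Mukai-pairing count for the $\Ext^1$ vanishings, and then the splitting of the triangle from Proposition \ref{prop:B0HkcapkerZH} using those vanishings together with the vanishing of self-extensions of $\OO_\Theta(-1)$ and $\OO_X[1]$. The only minor deviations are harmless: you compute $\Ext^1(\OO_X,\OO_\Theta(-1))$ directly as $H^1(\PP^1,\OO_{\PP^1}(-1))=0$ where the paper uses the Euler characteristic, and you spell out the splitting step (including the correct identification of which $\Ext^1(E'',E')$ is needed for each $k$) that the paper dismisses as following easily.
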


\begin{proof}
By Serre Duality,
\[
\Ext^1 (\OO_\Theta (-1), \OO_X [1]) \cong \Hom (\OO_X, \OO_\Theta (-1))
\]
which must vanish, because every morphism $\OO_X \to \OO_\Theta (-1)$ of sheaves factors as $\OO_X \to \OO_\Theta \to \OO_\Theta (-1)$, but the only morphism $\OO_\Theta \to \OO_\Theta (-1)$ is the zero map (e.g.\ use slope stability for torsion-free sheaves on $\Theta$).

With the above vanishing and $\Ext^2 (\OO_X, \OO_\Theta (-1)) \cong \Hom (\OO_\Theta (-1), \OO_X)=0$, it follows that
\begin{align*}
 \dimension \Ext^1 (\OO_X, \OO_\Theta (-1)) &= -\chi (\OO_X, \OO_\Theta (-1)) \\
&= (v(\OO_X), v(\OO_\Theta (-1))) \text{ by Riemann-Roch} \\
&= ((1, 0, 1), (0, \Theta, 0)) \\
&= 0.
\end{align*}
		Also by Serre duality, we have $$\Ext^2(\cO_\Theta(i), \cO_\Theta(i))\simeq\Hom(\cO_\Theta(i), \cO_\Theta(i))\simeq \mathbb{C}.$$

By Riemann-Roch, we have $$\chi(\cO_\Theta(i), \cO_\Theta(i))=-(\nu(\cO_\Theta(i)), \nu(\cO_\Theta(i)))=2.$$
Hence $2-\mathrm{ext}^1(\cO_\Theta(i), \cO_\Theta(i))=2$, which implies that $$\Ext^1(\cO_\Theta(i), \cO_\Theta(i))=0.$$

The second part of the proposition follows easily from Proposition \ref{prop:B0HkcapkerZH}.
\end{proof}

%	We consider the central charge $Z_{H}$ corresponds to origin in the $(D_\omega, V_\omega)$-plane. The formula for $Z_{H}$ is given by 
%	\begin{equation*}
%		Z_{H}(E)=-ch_2(E)+iH\cdot ch_1(E),
%	\end{equation*}
%	where $H=\Theta+ef$ is a nef class.
	
\paragraph \label{para:weakstab1-ker-phases} 
To define a weak stability function, we still need to define the phases of objects in the subcategory $\B_{H, k}^0\cap \ker(Z_H)$ in such a way that the weak see-saw property is satisfied. We define the phases $\{\phi_{H, k}(K)|_{K\in\B_{H, k}^0\cap \ker(Z_H)}\}$ by taking the limit of the phase of $Z_{V_\omega, D_\omega}(K)$ as $V_\omega, D_\omega\to 0$.

%When $k=-2$, we have $$(\kernel Z_{H})\cap\B^0_{H, -2})=\langle O_X[1], i_*O_\Theta(-1)\rangle.$$

	We have $$\lim\limits_{V_\omega, D_\omega\rightarrow 0}\phi_{V_\omega, D_\omega}(\cO_X[1])=1$$ and $$\lim\limits_{V_\omega, D_\omega\rightarrow 0}\phi_{V_\omega, D_\omega}(\cO_\Theta(-1))=\frac{1}{2}.$$ 
 As a result, it is only when $k=-2$ that we have $0<\phi_{H, k}(K)\leq 1$ for any $K\in\Bc_{H,k}^0 \cap \ker (Z_H)$.
 Given a point $b\in \mathbb{P}^1$, whose coordinates are given by $[V_\omega: D_\omega]$, we define 
\begin{equation*}
    \phi^L_b(K)=\lim\limits_{V_\omega, D_\omega\to 0, [V_\omega: D_\omega]=b}\phi_{V_\omega, D_\omega}(K).
\end{equation*}

We check the weak see-saw property in Definition \ref{Def:Weak}.

%We define $$\phi_{H, -2}(O_X[1])=\lim\limits_{V\rightarrow 0}\phi_{V, D}(O_X[1])=1,$$ 
%and $$\phi_{H, -2}(i_*O_\Theta(-1))=\lim\limits_{D\rightarrow 0}\phi_{V, D}(i_*O_\Theta(-1))=\frac{1}{2}.$$

\begin{prop}
\label{Prop:weakseesawbefore}
Let 
\[
0\to K_1\to K\to K_2\to 0
\]
be a short exact sequence in $\B^0_{H, -2}\cap \ker(Z_H)$. Then we have 

(i) $\phi^L_b(K_1)\leq \phi^L_b(K_2)$ implies that $\phi^L_b(K_1)\leq \phi^L_b(K)\leq \phi^L_b(K_2)$.

(ii) $\phi^L_b(K_1)\geq \phi^L_b(K_2)$ implies that $\phi^L_b(K_1)\geq \phi^L_b(K)\geq \phi^L_b(K_2)$.
\end{prop}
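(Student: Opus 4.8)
The plan is to reduce the weak see-saw property for $Z_H$ on $\Bc^0_{H,-2}$ to the elementary fact that the argument of a sum of two complex numbers lying in a common closed pointed cone lies between their individual arguments. First I would record the values of $Z_{V_\omega,D_\omega}$ on the two building blocks supplied by Lemma~\ref{lem:AG51-174-1}. From the Chern character computations preceding Proposition~\ref{prop:weakstabfuncbefore} we have $\ch(\OO_X[1])=(-1,0,0)$ and $\ch(\OO_\Theta(-1))=(0,\Theta,0)$, so, using $\Theta^2=-e$ and $f\cdot\Theta=1$,
\[
  Z_{V_\omega,D_\omega}(\OO_X[1]) = -V_\omega, \qquad Z_{V_\omega,D_\omega}(\OO_\Theta(-1)) = iD_\omega .
\]
By Lemma~\ref{lem:AG51-174-1} every $K\in\Bc^0_{H,-2}\cap\ker(Z_H)$ is a direct sum $\OO_X[1]^{\oplus a}\oplus\OO_\Theta(-1)^{\oplus c}$ with $a,c\ge 0$, and additivity of $Z_{V_\omega,D_\omega}$ gives $Z_{V_\omega,D_\omega}(K)=-aV_\omega+icD_\omega$. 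Writing $b=[v:d]$ (where necessarily $v,d>0$, since the rays $[V_\omega:D_\omega]=b$ along which the limit is taken consist of ample $\omega$, forcing $V_\omega,D_\omega>0$), the value $Z_{V_\omega,D_\omega}(K)$ is a positive real multiple of $w_b(K):=-av+icd$ at every point of the ray, so the limit phase is
\[
  \phi^L_b(K) = \tfrac1\pi\arg\!\bigl(-av+icd\bigr),
\]
which for $K\ne 0$ lies in $[\tfrac12,1]$, being $\tfrac12$ precisely when $a=0$ and $1$ precisely when $c=0$.

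Next I would apply additivity of $\ch_0$ and $\ch_1$ to the short exact sequence $0\to K_1\to K\to K_2\to 0$ to get $a=a_1+a_2$ and $c=c_1+c_2$, hence $w_b(K)=w_b(K_1)+w_b(K_2)$. Each $w_b(K_i)$ lies in the closed second quadrant $Q=\{z\in\Cfield : \Re z\le 0,\ \Im z\ge 0\}$ (and equals $0$ exactly when $K_i=0$); on $Q\setminus\{0\}$ the function $\arg$ is single-valued with values in $[\tfrac\pi2,\pi]$, and $Q$ is a pointed convex cone. Consequently, for nonzero $z_1,z_2\in Q$ one has $\min\{\arg z_1,\arg z_2\}\le \arg(z_1+z_2)\le \max\{\arg z_1,\arg z_2\}$ (assuming $\arg z_1\le\arg z_2$, factor out $e^{i\arg z_1}$; the remaining factor $|z_1|+|z_2|e^{i(\arg z_2-\arg z_1)}$ sits in the closed first quadrant, with argument in $[0,\arg z_2-\arg z_1]$ since $\arg z_2-\arg z_1\le\tfrac\pi2$). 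Applying this with $z_i=w_b(K_i)$ yields
\[
  \min\{\phi^L_b(K_1),\phi^L_b(K_2)\}\ \le\ \phi^L_b(K)\ \le\ \max\{\phi^L_b(K_1),\phi^L_b(K_2)\},
\]
which is exactly the two implications (i) and (ii). The degenerate cases $K_1=0$, $K_2=0$ or $K=0$ are immediate, the sequence then forcing an isomorphism among the nonzero terms.

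I do not expect a genuine obstacle here: the real work — identifying $\Bc^0_{H,-2}\cap\ker(Z_H)$ with direct sums of $\OO_X[1]$ and $\OO_\Theta(-1)$ and computing their limiting phases — is already carried out in Proposition~\ref{prop:B0HkcapkerZH} and Lemma~\ref{lem:AG51-174-1}, after which the see-saw property is just the ``argument of a sum'' estimate in the plane. The only points demanding a little care are: checking that $v,d>0$ so that all the $w_b(\cdot)$ genuinely lie in $Q$ (ruling out any wrap-around of the argument); dispatching the vanishing objects; and justifying $a=a_1+a_2$, $c=c_1+c_2$, which comes from additivity of $\ch_0$ and $\ch_1$ (equivalently, from $-V_\omega$ and $iD_\omega$ being $\RR$-linearly independent in $\Cfield$).
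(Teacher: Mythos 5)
Your proposal is correct and follows essentially the same route as the paper: both decompose the kernel objects into copies of $\OO_X[1]$ and $\OO_\Theta(-1)$ via Lemma \ref{lem:AG51-174-1}, use additivity of the multiplicities across the short exact sequence, and then conclude by an elementary planar estimate (you phrase it as the argument of a sum of vectors in the closed second quadrant, the paper as the mediant-type inequality for the slopes $\rho^L_b=\tfrac{bm_1}{m_0}$). The only cosmetic difference is that you argue directly with phases while the paper passes to slopes, and you explicitly restrict to $b=[v:d]$ with $v,d>0$, whereas the paper allows $b\in[0,\infty]$; the endpoint cases are anyway trivial since all phases there are $\tfrac12$ or $1$.
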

\begin{proof}
By lemma \ref{lem:AG51-174-1}, we have for any $K\in \Bc_{H,-2} \cap \ker (Z_H)$, $K$ is isomorphic to the direct sum of $\cO_X[1]$ and $\cO_\Theta(-1)$. Assume that 
\[
K_0\simeq (\cO_\Theta(-1))^{l_0}\oplus(\cO_X[1])^{l_1},
\]
\[
K\simeq (\cO_\Theta(-1))^{m_0}\oplus(\cO_X[1])^{m_1},
\]
\[
K_1\simeq (\cO_\Theta(-1))^{n_0}\oplus(\cO_X[1])^{n_1}.
\]
To prove the proposition, it is enough to show the same relation for the Bridgeland slope. We define $\rho_{V_\omega, D_\omega}=-\frac{\Re(Z_{V_\omega, D_\omega})}{\Im(Z_{V_\omega, D_\omega})}$, and $\rho_b^L=-\text{cot}(\pi\phi_b^L(\_))$. We have
\begin{equation*}
\begin{split}
%\phi^L_b(K)&=\lim\limits_{V_\omega, D_\omega\to 0, [V_\omega: D_\omega]=b}\phi_{V_\omega, D_\omega}(K)\\
\rho^L_b(K)&=\lim\limits_{V_\omega, D_\omega\to 0, [V_\omega: D_\omega]=b}\rho_{V_\omega, D_\omega}(K)\\
&=\lim\limits_{V_\omega, D_\omega\to 0, [V_\omega: D_\omega]=b}-\frac{\Re(Z_{V_\omega, D_\omega}(K))}{\Im(Z_{V_\omega, D_\omega}(K))}\\
&=\lim\limits_{V_\omega, D_\omega\to 0, [V_\omega: D_\omega]=b}\frac{V_\omega\cdot m_1}{D_\omega\cdot m_0}
\end{split}
\end{equation*}
We write $b=V_\omega/D_\omega$, and $b\in[0, \infty]$. Then $\phi^L_b(K)=\frac{bm_1}{m_0}$. Similarly, we have $\phi^L_b(K_0)=\frac{bl_1}{l_0}$ and $\phi^L_b(K_1)=\frac{bn_1}{n_0}$.
Since $m_0=l_0+n_0$ and $m_1=l_1+n_1$, we obtain the weak see-saw property in Definition \ref{Def:Weak}.  
\end{proof}
%We have the triple $$\sigma_b=(Z_{H}, \B^0_{H, -2}, \{\phi_b(E)\}|_{E\in \ker (Z_{H})\cap\B^0_{H, -2}})$$ satisfies the weak see saw property in Definition \ref{Def:Weak}. 
For each $b\in\mathbb{P}^1$, we have a triple $\sigma^L_b=(Z_{H}, \B^0_{H, -2}, \{\phi_b(E)\}|_{E\in \ker (Z_{H})\cap\B^0_{H, -2}})$. Finally we check the HN property in Definition \ref{Def:Weak}.

%[$\rho^L_b$ and $\rho_{V_\omega, D_\omega}$ have not been defined in the proof above]

%To check the triple indeed defines a weak stability condition, we still need to check HN property. For this, we apply Proposition \ref{prop:weakstabconcorr-var1} to the weak stability condition after the Fourier-Mukai transform. The proof is deferred to the end of Section 12.
\begin{prop}
\label{Prop:HNorigin}
	The triple $\sigma_b^L$ 
%	\begin{equation*}
%	=(Z_{V, H}, \B^0_{H, -2}, \phi_{V, H}(i_*\cO_X(-1)[1]))
%	    \end{equation*}
 satisfies the HN property, hence defines a weak stability condition.
\end{prop}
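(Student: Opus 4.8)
The plan is to verify the two chain conditions in Proposition~\ref{Prop:HNfil}, since $\sigma_b^L = (Z_H, \Bc^0_{H,-2}, \{\phi_b(E)\}_{E \in \ker(Z_H) \cap \Bc^0_{H,-2}})$ already satisfies conditions (i) and (ii) of Definition~\ref{Def:Weak} by Propositions~\ref{prop:weakstabfuncbefore} and \ref{Prop:weakseesawbefore}. Once those chain conditions are in place, Proposition~\ref{Prop:HNfil} immediately gives the HN property, and hence $\sigma_b^L$ is a weak stability condition.

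The key point for establishing the chain conditions is discreteness of the relevant numerical data. Observe that for $E \in \Bc^0_{H,-2}$, the imaginary part $\Im Z_H(E) = H \cdot \ch_1(E)$ takes values in a discrete subset of $\RR_{\geq 0}$: indeed $H = \Theta + ef$ is integral, so $H\cdot\ch_1$ is integer-valued, and moreover by condition (i) of Proposition~\ref{prop:K3surfstrongerBGdeform}-type arguments (or directly from the structure of $\Coh(X)$) there is a positive minimum among positive values of $H\cdot\ch_1$. First I would handle the standard reduction: in a descending chain $\cdots \subset E_{i+1} \subset E_i \subset \cdots \subset E_1$ with $\phi(E_{i+1}) > \phi(E_i/E_{i+1})$, the quantity $\Im Z_H(E_i)$ is non-increasing in $i$ (by the weak see-saw property applied to $0 \to E_{i+1} \to E_i \to E_i/E_{i+1} \to 0$, noting that $\phi(E_{i+1})>\phi(E_i/E_{i+1})$ forces $\Im Z_H(E_i/E_{i+1})$ or $\Im Z_H(E_{i+1})$ to be controlled appropriately), so it stabilizes; then one passes to the sub-chain where all $\Im Z_H(E_i)$ are equal, so all the quotients $E_i/E_{i+1}$ lie in the subcategory where $\Im Z_H = 0$. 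On this subcategory $Z_H$ takes values in $\RR_{\leq 0}$, and one argues that $\Re Z_H(E_i) = -\ch_2(E_i)$ is monotone and bounded, using Proposition~\ref{prop:B0HkcapkerZH} and Lemma~\ref{lem:AG51-174-1} to control the objects with $Z_H = 0$ (they are direct sums of $\OO_X[1]$ and $\OO_\Theta(-1)$, whose multiplicities cannot increase indefinitely inside a fixed $E_1$). The dual argument handles chain condition (ii) for ascending chains of quotients.

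The main obstacle I anticipate is the step where $\Im Z_H$ has stabilized and one must rule out infinite chains among objects with $Z_H = 0$ together with objects whose phase is controlled but whose $\Re Z_H$ need not be bounded a priori. Here one needs to combine: (a) the explicit description $\ker(Z_H) \cap \Bc^0_{H,-2} = \langle \OO_X[1], \OO_\Theta(-1)\rangle$ from Proposition~\ref{prop:B0HkcapkerZH} and its refinement in Lemma~\ref{lem:AG51-174-1}; (b) the fact that the phases $\phi_b$ on this kernel are determined by the ratio of multiplicities $m_1/m_0$ as in the proof of Proposition~\ref{Prop:weakseesawbefore}, which only takes finitely many values once we fix a bound on the total object; and (c) the observation that once $\Im Z_H$ is fixed, $-\ch_2$ of the sheaf part is bounded below by the Bogomolov--Gieseker and Hodge Index inequalities (as in the proof of Proposition~\ref{prop:weakstabfuncbefore}), so $\Re Z_H$ is bounded and takes discrete (in fact, after rescaling, half-integer or integer) values. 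Assembling these discreteness and boundedness facts carefully — and making sure the phase strict inequalities in the chain conditions genuinely force termination rather than merely non-increase — is where the real work lies; the rest follows the template of \cite[Proposition 2.4]{StabTC} as already adapted in Proposition~\ref{Prop:HNfil}.
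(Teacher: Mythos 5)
Your skeleton matches the paper's: reduce to Proposition \ref{Prop:HNfil}, exploit the integrality (hence discreteness) of $\Im Z_H = H\ch_1$, and use Proposition \ref{prop:B0HkcapkerZH} and Lemma \ref{lem:AG51-174-1} to control $\kernel (Z_H)\cap \Bc^0_{H,-2}$. But two steps have genuine gaps. In chain condition (i), after $\Im Z_H(E_i)$ stabilizes you want the quotients $F_i=E_i/E_{i+1}$ to land in $\kernel(Z_H)$ via a ``monotone and bounded'' argument for $\Re Z_H(E_i)$. The boundedness is not established: for fixed $\Im Z_H$, $\Re Z_H=-\ch_2$ is not bounded above on $\Bc^0_{H,-2}$ (the $H^0$-part of an object can have arbitrarily negative $\ch_2$; BG plus Hodge index only controls the $H^{-1}$-part), so you would need a separate argument along the chain. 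It is also unnecessary: the paper gets $F_i\in\kernel(Z_H)$ at once from the phase hypothesis, since $\Im Z_H(F_i)=0$ together with $F_i\notin\kernel(Z_H)$ would give $\phi(F_i)=1\geq\phi(E_{i+1})$, contradicting $\phi(E_{i+1})>\phi(F_i)$. More importantly, knowing $F_i\in\langle\OO_X[1],\OO_\Theta(-1)\rangle$ does not by itself terminate the chain; your parenthetical that the ``multiplicities cannot increase indefinitely inside a fixed $E_1$'' is precisely what requires proof (the $F_i$ are not subobjects of $E_1$, so there is no a priori numerical bound). The paper's mechanism is concrete: the vanishings $\Ext^1(F_i,\OO_X[1])=\Ext^1(F_i,\OO_\Theta(-1))=0$ from Lemma \ref{lem:AG51-174-1} make $\dim\Hom(E_i,\OO_X[1])$ and $\dim\Hom(E_i,\OO_\Theta(-1))$ non-increasing in $i$, and once they stabilize, $\Hom(F_i,\OO_X[1])=\Hom(F_i,\OO_\Theta(-1))=0$ forces $F_i=0$.

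The larger gap is your claim that condition (ii) follows by ``the dual argument.'' It does not: for a chain of quotients with kernels $K_i$, the hypothesis $\phi(K_i)>\phi(E_{i+1})$ does not force $K_i\in\kernel(Z_H)$ once $\Im Z_H(K_i)=0$, because $\phi(K_i)=1$ (i.e.\ $Z_H(K_i)\in\mathbb{R}_{<0}$) is perfectly consistent with the hypothesis. This asymmetry is where most of the paper's proof lives: it rewrites the chain as $0\to L_i\to E\to E_i\to 0$ with $\Im Z_H(L_i)=0$, uses noetherianity of $\Coh(X)$ to stabilize $H^0(E_i)$ and the increasing chain $H^{-1}(L_i)\subset H^{-1}(E)$, shows by sign constraints on $\ch_2$ together with the BG/Hodge-index bound (essentially your ingredient (c)) that $\ch_2(H^0(L_i))$ must stabilize, concludes that eventually $K_i\in\langle\OO_\Theta(-1)\rangle$ or $K_i=0$, and only then finishes with a Hom-count using $\Ext^1(\OO_\Theta(-1),\OO_\Theta(-1))=0$. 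Without an argument of this shape (or a proof that $\Bc^0_{H,-2}$ is noetherian, which the paper does not assert here), condition (ii) is not established. A minor correction: the monotonicity of $\Im Z_H(E_i)$ along the chains comes from additivity of $Z_H$ and nonnegativity of $\Im Z_H$ on the heart, not from the weak see-saw property.
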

\begin{proof}
	We follow the argument of Proposition 7.1 in \cite{SCK3}.
		
 We first check condition (i) in  Proposition \ref{Prop:HNfil}. Assume we have such a chain of subobjects in $\B^0_{H, -2}$ as in condition (i). Consider the short exact sequence in $\B^0_{H, -2}$:
		
  \begin{equation}\label{eq:EEF-1}
  0\rightarrow E_{i+1}\rightarrow E_i\rightarrow F_i\rightarrow 0.
  \end{equation}
		Since $\Im(Z_H(F_i))\geq 0$, we have $\Im(Z_H(E_{i}))\geq \Im(Z_H(E_{i+1}))$. 
		Since $\Im(Z_H)$ has coefficients in $\mathbb{Z}$, we know that $\Im(Z_H)$ is discrete. Hence $\Im(Z_H(E_i))$ is constant for $i\gg 0$. Then $\Im(Z_H(F_i))=0$ for $i>n$ for some $n$. 
		
		Let $i>n$.  If $F_i\notin \B^0_{H, -2}\cap\ker(Z_H) $, then $\phi^L_b(F_i)=1$, contradicting  $\phi^L_b(E_{i+1})>\phi^L_b(F_i)$.
		%then $E_i\notin \K_\B$ and we have $\Re(Z'_0(F_i))<0$, hence $\Re(Z'_0(E_{i+1}))>\Re(Z'_0(E_{i}))$, 
		
		So we must have $F_i\in \ker(Z_H)\cap \B^0_{H, -2}=\langle \cO_X[1],\cO_\Theta(-1)\rangle$ for $i>n$. 
By Lemma \ref{lem:AG51-174-1}, we have 
\[
\Ext^1(F_i, \cO_\Theta(-1))=\Ext^1( F_i,\cO_X[1])=0.
\]

Now, applying the functors $\Hom (-, \OO_X[1])$ and $\Hom (-,\OO_\Theta (-1))$ to \eqref{eq:EEF-1}, we see that  the dimensions of $\Hom(E_{i}, \cO_\Theta(-1))$ and  $\Hom(E_i, \cO_X[1])$ are both non-increasing as $i$ increases, and for $i\gg 0$, we have 
\[
\Hom (F_i, \OO_X[1])=\Hom (F_i, \OO_\Theta (-1)) = 0
\]
which implies $F_i=0$ by Proposition \ref{lem:AG51-174-1}. 

%Since $F_i$ is a direct sum of $\OO_X[1]$ and $\OO_\Theta (-1)$ by Lemma \ref{lem:AG51-174-1}, we must have $F_i=0$ for $i$ large enough. This shows condition (i) in Proposition \ref{Prop:HNfil}.

Next we consider condition (ii) in  Proposition \ref{Prop:HNfil}. Assuming we have a sequence 
\[
E=E_1\twoheadrightarrow E_2\twoheadrightarrow E_3\twoheadrightarrow...
\]

Consider the short exact sequence 
\begin{equation}\label{Equ:HNKEE}
0\to K_i\to E_i\to E_{i+1}\to 0.
\end{equation}
Similar to the argument above, we have $\Im(Z_H(E_i))$ is constant for $i\gg 0$.
Omitting a finite number of terms, we can assume there are short exact sequences 
\begin{equation}
\label{Equ:LEE}
0\to L_i\to E\to E_i\to 0
\end{equation}
with $\Im(Z_H(L_i))=0$ for all $i \geq 0$.
Also by the long exact sequence of cohomology, we have the surjective chain in $\Coh(X)$:
\[H^0(E_1)\twoheadrightarrow H^0(E_2)\twoheadrightarrow H^0(E_3)\twoheadrightarrow...\]
Since $\Coh(X)$ is noetherian, this sequence stabilizes. 
Hence we can assume that $H^0(E)\simeq H^0(E_i)$ for all $i$.
Taking cohomology sheaves of \eqref{Equ:LEE}, we have 
\begin{equation}
\label{Equ:4termexactseq}
0\to H^{-1}(L_i)\to H^{-1}(E)\xrightarrow{f} H^{-1}(E_i)\to H^0(L_i)\to 0.
\end{equation}
Consider the short exact sequence
\begin{equation}\label{Equ:HNLLB}
0\to L_{i-1}\to L_i\to B_i\to 0
\end{equation}
where $B_i \cong K_{i-1}$ by the octahedral axiom.  Taking cohomology sheaves shows that we have a chain 
\[
H^{-1}(L_1)\subset H^{-1}(L_2)\subset H^{-1}(L_3)\subset...\subset H^{-1}(E)
\]
which stabilizes after a finite number of terms. Hence by omitting finite number of terms, we can also assume that $H^{-1}(L_i)\simeq H^{-1}(L_{i+1})$ for all $i$.
Then %for $i\gg 0$ 
the exact sequence \ref{Equ:4termexactseq} becomes 
\begin{equation}
\label{Equ:Qses}
0\to Q\to H^{-1}(E_i)\to H^0(L_i)\to 0,
\end{equation}
where $Q$ is the image of $f$ and is independent of $i$. We also have the following exact sequence of cohomology sheaves from \eqref{Equ:HNLLB}.
\[
0\to H^{-1}(B_i)\to H^0(L_{i-1})\to H^0(L_i)\to H^0(B_i)\to 0.
\]

Since $\Im(Z_H(L_i))=0$, we have $\Im(Z_H(H^0(L_i)))=0$, hence $H^0(L_i)$ lies in the extension closure of zero dimensional sheaves and sheaves in $\langle \cO_\Theta(j)|j>-2\rangle$. 
Since $L_{i-1}$, $L_i$ and $B_i$ are all in $\B^0_{H, -2}$, it follows that $\Im (Z_H (B_i))=0$ for all $i \geq 0$ as well, and we have 
\begin{itemize}
\item[(a)] $H^{-1}(B_i)=0$ or $\ch_2(H^{-1}(B_i))< 0$, 
\item[(b)] $\ch_2(H^0(L_{i-1}))\geq 0$, 
\item[(c)] $\ch_2(H^0(L_{i}))\geq 0$, 
\item[(d)] $\ch_2(H^0(B_i))\geq 0$.
\end{itemize}

Hence $\ch_2(H^0(L_i))\geq \ch_2(H^0(L_{i-1}))$.
%If $ch_2(H^0(L_i))> ch_2(H^0(L_{i-1}))$ for all $i$,
Then we must have $\ch_2(H^0(L_i))=\ch_2(H^0(L_{i-1}))$ for $i\gg 0$. If not, by the short exact sequence \eqref{Equ:Qses} we have $\ch_2(H^{-1}(E_i))$ has no upper bound. Then $-\ch_2(E_i)>0$ for $i\gg 0$, which contradicts $Z_H$ being a weak stability function on $\Bc^0_{H,-2}$ (Proposition \ref{prop:weakstabfuncbefore}). 

Now that we know $\ch_2(H^0(L_i))$ is constant for $i \gg 0$, by (a) above we have  $H^{-1}(B_i)=0$ for $i\gg 0$ and
we have 
\[
0\to H^0(L_{i-1})\to H^0(L_i)\to H^0(B_i)\to 0.
\]
Then either $H^0(B_i)\in\langle \cO_\Theta(-1)\rangle$ or $H^0(B_i)=0$. Recall that  we have $B_{i+1}\simeq K_i$. Thus we have for $i\gg 0$, $K_i\in\langle \cO_\Theta(-1)\rangle$ or $K_i=0$.
Applying $\Hom(\cO_\Theta(-1), \_)$ to the short exact sequence \eqref{Equ:HNKEE} and recalling $\Ext^1(\OO_\Theta (-1), \OO_\Theta (-1))=0$ from Lemma \ref{lem:AG51-174-1}, if $K_i\neq 0$ then we have
\[
\dim\Hom(\cO_\Theta(-1), E_i)>\dim\Hom(\cO_\Theta(-1), E_{i+1}).
\]
Hence $K_i$ must vanish for $i \gg 0$, i.e.\ $E_i\simeq E_{i+1}$ for $i\gg 0$.  
This proves the HN property for $\sigma_b^L$.
%If $H^0(L_i)$ supported on dimension zero for all $i\gg 0$, Bridgeland's original argument shows that $L_{i-1}\simeq L_i$ for $i\gg 0$.
%
%If $H^0(L_i)$ support on $\Theta$ for some $m$, then $H^0(L_i)$ support on $\Theta$ for all $i>m$.
%
%On the other hand, $H^{-1}(B_i)$ is torsion free. 
\end{proof}

\begin{rem}\label{rem:AG52-17-2}
Since $\Bc_{H,-2} ^0 \cap \kernel (Z_H)$ is a Serre subcategory of $\Bc_{H,-2}^0$, and every object of $\Bc_{H,-2}^0 \cap \kernel (Z_H)$ is a direct sum of $\OO_X[1]$ and $\OO_\Theta (-1)$ which have different phases with respect to $\sigma^L_b$,  the only $\sigma^L_b$-semistable objects in $\Bc_{H,-2}^0 \cap \kernel (Z_H)$ are direct sums of $\OO_X[1]$ itself and direct sums of $\OO_\Theta (-1)$ itself.

\end{rem}
	
We denote the slicing of $\sigma_b^L$ by $\cP_b$. The next two results shows that given $\phi\in \mathbb{R}$, every object $E\in \cP_b(\phi)$ has a Jordan-H\"{o}lder filtration. 

\begin{lem} (Lemma 4.4 in \cite{SCK3})
	\label{Lem:Bri4.4}
 Let $\alpha$, $\beta$ be real numbers such that $0<\beta-\alpha<1-2\epsilon$ for some $0<\epsilon<\frac{1}{8}$.
	The thin subcategory $\cP_b((\alpha, \beta))$ is of finite length.
\end{lem}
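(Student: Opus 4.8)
The plan is to follow Bridgeland's argument for \cite[Lemma 4.4]{SCK3}, the one genuinely new ingredient being that, unlike in the classical case, the thin quasi-abelian category $\mathcal C:=\cP_b((\alpha,\beta))$ can now contain nonzero objects of $\kernel(Z_H)$, so the central-charge bookkeeping must be supplemented at the end. By Proposition \ref{Prop:Slic} and \cite{StabTC}, $\mathcal C$ is quasi-abelian and closed under passing to strict subobjects and strict quotients, so it is enough to rule out infinite descending, and (dually) ascending, chains of strict subobjects in $\mathcal C$. Translating $(\alpha,\beta)$ by an integer only changes $\mathcal C$ by a shift, so I may assume $\alpha\in[0,1)$. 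To build a length function I use that on a Weierstra{\ss} elliptic K3 surface $\ch_2(E)\in\ZZ$ for every $E\in D^b(X)$ (since $\td_X=(1,0,2)$, the Mukai vector has $\ch_2+\ch_0\in\ZZ$) and $H=\Theta+ef$ is integral, so $Z_H$ takes values in the discrete lattice $\ZZ\oplus i\ZZ\subset\Cfield$. Via its $\sigma^L_b$-Harder--Narasimhan filtration, every nonzero $E\in\mathcal C$ is an iterated extension of semistable objects of phases in $(\alpha,\beta)$, so $Z_H(E)$ lies in the closed cone $\Sigma$ spanned by $\{\,e^{i\pi\phi}:\phi\in[\alpha,\beta]\,\}$; since $\beta-\alpha<1-2\epsilon<1$, the cone $\Sigma$ has angular width $<\pi$ and is therefore pointed, and I may choose a real-linear functional $\ell\colon\Cfield\to\RR$ with \emph{rational} coefficients which is strictly positive on $\Sigma\setminus\{0\}$. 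Then $N:=\ell\circ Z_H$ is additive on strict short exact sequences in $\mathcal C$, takes values in a discrete subgroup of $\RR$, satisfies $N\ge 0$ on $\mathcal C$, and, for nonzero $E\in\mathcal C$, has $N(E)=0\iff Z_H(E)=0\iff E\in\kernel(Z_H)\cap\mathcal C$.

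For a descending chain of strict subobjects $\cdots\subset E_2\subset E_1\subset E_0$ in $\mathcal C$, the sequence $N(E_i)$ is non-increasing, bounded below by $0$, and discrete-valued, hence eventually constant; so for $i\gg0$ the quotient $E_i/E_{i+1}$ lies in $\kernel(Z_H)\cap\mathcal C$ and $Z_H(E_i)$ is constant. It then remains to bound the length of chains whose consecutive quotients all lie in $\kernel(Z_H)\cap\mathcal C$, and here I invoke the explicit structure of that subcategory. By Proposition \ref{prop:B0HkcapkerZH}, Lemma \ref{lem:AG51-174-1} and Remark \ref{rem:AG52-17-2}, together with $\cP_b(\phi+1)=\cP_b(\phi)[1]$, every object of $\kernel(Z_H)\cap\mathcal C$ is a finite direct sum $S_1^{\oplus a}\oplus S_2^{\oplus b}$, where $S_1$ and $S_2$ are the shifts of $\OO_X[1]$ and $\OO_\Theta(-1)$ whose $\sigma^L_b$-phase lies in $(\alpha,\beta)$ (there is at most one of each, since $\OO_X[1]$ has phase $1$, $\OO_\Theta(-1)$ has phase $\tfrac12$, and an interval of length $<1$ meets $\ZZ$ and $\tfrac12+\ZZ$ in at most one point each). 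Moreover $\mathrm{End}(S_j)=\Cfield$, there are no nonzero morphisms between the $S_1$- and $S_2$-summands, and all $\Ext^1$ groups among $S_1,S_2$ vanish (by Lemma \ref{lem:AG51-174-1}, by $H^1(\OO_X)=0$ on a K3, and by the vanishing of $\Ext^\bullet(\OO_X,\OO_\Theta(-1))$), so $\kernel(Z_H)\cap\mathcal C$ is a semisimple category of finite length in which the length of $S_1^{\oplus a}\oplus S_2^{\oplus b}$ is $a+b$.

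To finish the descending case, consider for $i\gg0$ the strict quotients $Q_i:=E_0/E_i\in\kernel(Z_H)\cap\mathcal C$, which form a chain $\cdots\twoheadrightarrow Q_{i+1}\twoheadrightarrow Q_i$; writing $Q_i\cong S_1^{\oplus a_i}\oplus S_2^{\oplus b_i}$, the $a_i$ and $b_i$ are non-decreasing. The strict epimorphism $E_0\twoheadrightarrow Q_i$ induces an injection $\Hom(Q_i,S_1)\hookrightarrow\Hom(E_0,S_1)$ --- injective because $S_1[-1]$ has phase in $(\alpha-1,\beta-1)\subset(-\infty,\alpha)$, so $\Hom(E_i,S_1[-1])=0$ by the slicing property --- whence $a_i=\dim\Hom(Q_i,S_1)\le\dim\Hom(E_0,S_1)<\infty$, and likewise $b_i\le\dim\Hom(E_0,S_2)$. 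Thus $a_i$ and $b_i$ stabilize, each surjection $Q_{i+1}\to Q_i$ is eventually an isomorphism, $E_i/E_{i+1}=0$, and the chain terminates. The ascending case is symmetric: $N(E_i)$ is now non-decreasing and bounded above, the quotients $E_{i+1}/E_i$ eventually lie in $\kernel(Z_H)\cap\mathcal C$, and one bounds the lengths of $E_i/E_1\in\kernel(Z_H)\cap\mathcal C$ using the finiteness of $\Hom(S_j,E_0)$ and $\Ext^1(S_j,E_1)$ (now using that $S_j[1]$ has phase $>\beta$). Hence $\mathcal C=\cP_b((\alpha,\beta))$ is of finite length.

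The step I expect to be the real obstacle is the one absent from Bridgeland's setting: once $N=\ell\circ Z_H$ has stabilized, termination can no longer be read off from the central charge, and one must instead use the semisimple description of $\kernel(Z_H)\cap\cP_b((0,1])$ --- direct sums of the rigid objects $\OO_X[1]$ and $\OO_\Theta(-1)$ with no cross-homomorphisms --- supplied by Proposition \ref{prop:B0HkcapkerZH}, Lemma \ref{lem:AG51-174-1} and Remark \ref{rem:AG52-17-2}. A secondary technical point is the choice of a \emph{rational} functional $\ell$, so that $\ell(\ZZ\oplus i\ZZ)$ is discrete in $\RR$; this is possible precisely because $\Sigma$ is pointed, which is where the hypothesis $0<\beta-\alpha<1-2\epsilon$ (in effect $\beta-\alpha<1$) is used.
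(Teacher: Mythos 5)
Your proof is correct and follows essentially the same route as the paper's: a linear functional positive on the charge cone away from $\kernel(Z_H)$ (the paper uses $\Re(e^{-i\pi\phi}Z_H)$ with $\phi$ the midpoint of the interval) disposes of the steps with nonzero charge, and Hom-dimension counting against the kernel generators $\OO_X[1]$ and $\OO_\Theta(-1)$ (suitably shifted) disposes of the rest, your choices of a rational functional and the explicit bookkeeping of which shifts can occur merely filling in details the paper leaves implicit. One small slip to fix: once $Z_H(E_i)$ has become constant you should take the quotients $E_{i_0}/E_i$ for $i\ge i_0$ with $i_0$ large (rather than $Q_i=E_0/E_i$), since $Z_H(E_0)$ need not equal the eventual constant value, so $E_0/E_i$ need not lie in $\kernel(Z_H)$; the rest of your stabilization argument goes through unchanged.
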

\begin{proof}
	We follow the argument of Lemma 4.4 in \cite{SCK3}.
	Let $\phi=\frac{\alpha+\beta}{2}$. Then $\cP_b((\alpha, \beta))\subset \cP_b((\phi-\frac{1}{2}, \phi+\frac{1}{2}))$. Define a function 
	\begin{equation*}
		f(E)=\Re(\exp(-i\pi\phi)Z_H(E)).
	\end{equation*}
	Then for any $E\in \cP_b((\alpha, \beta))$, we have $f(E)>0$ or $E\in \ker(Z_H)$. Given a strict short exact sequence 
	\begin{equation*}
		0\rightarrow G\rightarrow E\rightarrow F\rightarrow 0
	\end{equation*}
	in $\cP_b((\alpha, \beta))$, we have $f(E)=f(G)+f(F)$. We have
	the following two cases:
	
	(i) the value of $f$ decreases when taking a subobject, 
	
	(ii) the value of $f$ does not change when taking a subobject and the corresponding quotient object is in $\ker(Z_H)$. 
	
	Assume that $\cO_X[1]\in \cP_b((\alpha, \beta))$ and/or $\cO_\Theta(-1)\in \cP_b((\alpha, \beta))$, then in case (ii) either the dimension of $\Hom(\_, \cO_X[1])$ or the dimension of $\Hom(\_, \cO_\Theta(-1))$ decreases when taking a subobject. Since $Z_H$ is discrete, any chain in $\cP_b((\alpha, \beta))$ of the form
	\begin{equation*}
		E_0\subset E_1\subset ...\subset E_i\subset...\subset E,
	\end{equation*}
	or of the form 
	\begin{equation*}
		...\subset E_{j+1}\subset ...\subset E_1\subset E
	\end{equation*} 
	must terminate. 
\end{proof}
\begin{prop}
	\label{Prop:JHfil}
 Consider the weak stability condition $\sigma_b^L$. 
	Any $E\in \cP_b(\phi)$ has a finite Jordan-H\"{o}lder filtration into stable factors.
\end{prop}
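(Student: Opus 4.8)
The plan is to adapt Bridgeland's proof that a locally finite stability condition has the Jordan--H\"older property \cite{StabTC} (and its K3 incarnation \cite{SCK3}), feeding in Lemma \ref{Lem:Bri4.4} as the finiteness input and keeping track of the objects of $\kernel(Z_H)$.

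\textbf{Setup and reduction.} Since $\cP_b(t+1) = \cP_b(t)[1]$, it suffices to treat $\phi \in (0,1]$; then, by definition of the slicing, every $E \in \cP_b(\phi)$ is either $0$ or $\sigma_b^L$-semistable of phase $\phi$ in the heart $\Ac := \cP_b((0,1]) = \Bc^0_{H,-2}$, and we assume $E \neq 0$. Fix $\epsilon$ with $0 < \epsilon < \tfrac18$. By Lemma \ref{Lem:Bri4.4} the subcategory $\Cc := \cP_b((\phi-\epsilon, \phi+\epsilon))$ is of finite length, and since $\cP_b$ is a slicing (Proposition \ref{Prop:Slic}) it is abelian; note $E \in \cP_b(\phi) \subseteq \Cc$. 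It now suffices to establish that $\cP_b(\phi)$ is a finite length abelian subcategory of $\Cc$ whose simple objects are precisely the $\sigma_b^L$-stable objects contained in $\cP_b(\phi)$. Granting this, the usual Jordan--H\"older theorem applied in $\cP_b(\phi)$ gives a finite filtration $0 = E_0 \subset \cdots \subset E_n = E$ with $\mathrm{gr}_i(E) := E_i/E_{i-1}$ simple in $\cP_b(\phi)$, hence $\sigma_b^L$-stable; and since $\mathrm{gr}_i(E) \in \cP_b(\phi)$, either $\mathrm{gr}_i(E) \in \kernel(Z_H)$ or $\phi_b^L(\mathrm{gr}_i(E)) = \phi$, so this is a Jordan--H\"older filtration in the required sense (the definition preceding Remark \ref{Rem:stableness}). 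Also, once $\cP_b(\phi)$ is abelian its finite length is automatic: a chain of subobjects, resp.\ quotients, in $\cP_b(\phi)$ is such a chain in the finite length category $\Cc$, hence stabilises.

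\textbf{The two things to check.} First, that $\cP_b(\phi)$ is abelian: for $f\colon E' \to E''$ with $E', E'' \in \cP_b(\phi)$, the image $\image f$ (formed in $\Ac$) is simultaneously a quotient of $E'$ and a subobject of $E''$, which squeezes its Harder--Narasimhan phases to $\phi$ and places it in $\cP_b(\phi)$; the cokernel, being a quotient of $E''$, and the kernel, a subobject of $E'$, are then shown to lie in $\cP_b(\phi)$ by the argument of \cite{StabTC}. Second, that the simple objects of $\cP_b(\phi)$ are the $\sigma_b^L$-stable objects: if $S \in \cP_b(\phi)$ is simple but not stable, a destabilising $0 \neq F \subsetneq S$ in $\Ac$ with $\phi_b^L(F) \geq \phi_b^L(S/F)$ must (by semistability of $S$ together with the weak see-saw property, Remark \ref{rem:link1}) satisfy $\phi_b^L(F) = \phi_b^L(S) = \phi_b^L(S/F) = \phi$, whence $F, S/F \in \cP_b(\phi)$ --- contradicting simplicity of $S$ in $\cP_b(\phi)$ --- and the reverse implication is clear from the definition of stability in \ref{para:ssdefforweakstabcon}.

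\textbf{Where the work is.} The genuinely delicate point, absent in the Bridgeland case, is the nonzero subcategory $\cP_b(\phi) \cap \kernel(Z_H)$, which forces care in both checks above: a priori a subobject or quotient with $Z_H$-phase $\phi$ need not be $\sigma_b^L$-semistable once $\kernel(Z_H)$ meets $\Ac$ nontrivially. The inputs that control this are Lemma \ref{lem:AG51-174-1}, which shows $\Bc^0_{H,-2} \cap \kernel(Z_H)$ is semisimple with simple objects $\OO_X[1]$ (of phase $1$) and $\OO_\Theta(-1)$ (of phase $\tfrac12$); Remark \ref{rem:AG52-17-2}, that this is a Serre subcategory in which the only $\sigma_b^L$-semistable objects of a given phase are the isotypic direct sums; and the weak see-saw property of Definition \ref{Def:Weak}(ii) together with Lemma \ref{lem:AG52-37-1}, which pin the phases of the $\kernel(Z_H)$-subquotients that can occur. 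Since $\OO_X[1]$ and $\OO_\Theta(-1)$ have the fixed phases $1$ and $\tfrac12$ for every $b$, one has $\cP_b(\phi) \cap \kernel(Z_H) \neq 0$ only for $\phi \in \{\tfrac12, 1\}$; for all other $\phi$ the proof is verbatim Bridgeland's, and only the two boundary values $\phi = \tfrac12$ and $\phi = 1$ require the bookkeeping just described.
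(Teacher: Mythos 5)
Your reduction hinges on two claims: that $\cP_b(\phi)$ is an abelian category (closed under kernels and cokernels computed in $\Bc^0_{H,-2}$) whose simple objects are exactly the $\sigma_b^L$-stable ones, and that the kernel of $Z_H$ only interferes at $\phi=\tfrac12$ and $\phi=1$. Both claims break down, and they break down at the same spot. By Lemma \ref{lem:AG51-174-1} and Proposition \ref{Prop:weakseesawbefore}, a general object of $\Bc^0_{H,-2}\cap\kernel(Z_H)$ is a mixed direct sum $\OO_\Theta(-1)^{m_0}\oplus\OO_X[1]^{m_1}$, and for $m_0,m_1>0$ its limit phase $\phi^L_b$ is a value strictly between $\tfrac12$ and $1$ depending on $b$ and $m_1/m_0$; by Remark \ref{rem:AG52-17-2} such a mixed sum is \emph{not} $\sigma_b^L$-semistable, hence not in $\cP_b(\phi)$ even when its phase equals $\phi$. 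Consequently a strictly semistable $E\in\cP_b(\phi)$ with $\phi\notin\{\tfrac12,1\}$ can perfectly well be destabilised by a same-phase subobject or quotient lying in $\kernel(Z_H)$; the resulting kernel/cokernel has phase $\phi$ but is not semistable, so $\cP_b(\phi)$ is not closed under the subquotients your argument needs, your squeeze argument for abelianness does not go through, and the step ``$\phi^L_b(F)=\phi$ whence $F\in\cP_b(\phi)$'' in the simple-implies-stable check conflates ``has phase $\phi$'' with ``is semistable of phase $\phi$''. The claim that only the boundary values $\phi=\tfrac12,1$ need bookkeeping is therefore false: the kernel intrudes at every phase in between.

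This is not a removable technicality but the actual content of the proposition: the Jordan--H\"older factors demanded by the definition preceding Remark \ref{Rem:stableness} are allowed to be stable objects of $\kernel(Z_\sigma)$ of phase different from $\phi$, and in the situation above they \emph{must} be $\OO_\Theta(-1)$ (phase $\tfrac12$) and $\OO_X[1]$ (phase $1$), neither of which lies in $\cP_b(\phi)$ --- so no filtration carried out inside $\cP_b(\phi)$ can produce them. The paper's proof works directly in the heart $\Bc^0_{H,-2}$: it first extracts a stable subobject by an induction that terminates because the dimensions of $\Hom(-,\OO_X[1])$ and $\Hom(-,\OO_\Theta(-1))$ drop whenever a kernel quotient is split off (using the vanishing of the relevant $\Ext^1$ groups from Lemma \ref{lem:AG51-174-1}), and then builds the filtration by alternating phase-$\phi$ stable pieces with finite filtrations of the kernel pieces into stable factors, invoking Lemma \ref{Lem:Bri4.4} on an interval of length less than $1$ containing all of $\phi$, $\tfrac12$ and $1$ to force termination. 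If you want to salvage your outline, you would have to prove that $\cP_b(\phi)$ becomes abelian after enlarging it by the stable kernel objects of all phases, or else reproduce the paper's interleaving construction --- at which point the appeal to an abstract Jordan--H\"older theorem no longer saves any work.
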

\begin{proof}
	WLOG, we assume that $0<\phi\leq 1$.
	If $E$ is $\sigma_b^L$- stable, then we are done. From now on we assume that $E$ is not $\sigma_b^L$-stable. 
 
 Step 1. We show that there exists a subobject $A_1$ of $E$ in $\B^0_{H, -2}$, such that $A_1$ is $\sigma_b^L$-stable.
 
Since $E$ is not stable, there exists a short exact sequence 
	\begin{equation*}
		0\rightarrow E_1\rightarrow E\rightarrow F_1\rightarrow 0
	\end{equation*}
	in $\B^0_{H, -2}$ with $\phi_b(E_1)=\phi_b(E)=\phi_b(F_1)=\phi$. 
	If $E_1$ is stable, then the claim is true. If not, we first prove the following Claim.
	
	Claim: for any 
	short exact sequence in $\B^0_{H, -2}$:
	\begin{equation}
		\label{Equ:JHfil1}
		0\rightarrow E_1^{(1)}\rightarrow E_1\rightarrow E_1'\rightarrow 0
	\end{equation}
	with $\phi_b(E_1^{(1)})\geq\phi_b(E_1')$, we have $\phi_b(E_1^{(1)})=\phi$. 
	
	Note that by assumption we have $\phi_b(E_1^{(1)})\geq\phi$. Composing with the map $E_1\rightarrow E$, we form a short exact sequence in $\B^0_{H, -2}$:
	$$0\rightarrow E_1^{(1)}\rightarrow E\rightarrow F_1^{(1)}\rightarrow 0.$$ 
	
	By the diagram 
	\begin{equation}
		\label{Equ:Dagger}
		\begin{tikzcd}
			& &  & 0\ar{d} &\\
			& 0\ar{d}&  &E_1'\ar{d} &  \\
			0\ar{r} & E_1^{(1)}\ar{r}\ar{d}&E \ar{r} \ar[equal]{d}&F_1^{(1)}\ar{r}\ar{d} &0\\
			0\ar{r}& E_1\ar{r}\ar{d}& E \ar{r} & F_1\ar{r}& 0\\
			& E_1'\ar{d}& & &\\
			& 0 & & &
		\end{tikzcd}
	\end{equation}
	we have $\phi_b(E_1^{(1)})\geq \phi_b(F_1^{(1)})$. Since $E$ is $\sigma^L_b$-semistable, we have $$\phi_b(E_1^{(1)})=\phi_b(F_1^{(1)})=\phi$$
 which proves the claim.
 
	We have either $E_1\in \cP_b(\phi)$, or $E_1$ is not semistable and in which case  there exists a short exact sequence in the form of  \eqref{Equ:JHfil1} with 
	$$\phi=\phi_b(E_1^{(1)})=\phi_b(E_1)>\phi_b(E_1').$$
	This implies that $E_1'\in \ker(Z_H)\cap\B^0_{H, -2}$. Then if $E_1\notin \cP_b(\phi)$, we have either $$\text{dim}(\Hom(E_1^{(1)}, \cO_X[1]))<\text{dim}(\Hom(E_1, \cO_X[1])),$$
	or $$\text{dim}(\Hom(E_1^{(1)}, \cO_\Theta(-1)))<\text{dim}(\Hom(E_1, \cO_\Theta(-1))).$$
	Replace $E_1$ by $E_1^{(1)}$ and continue this process, we see that there exists $m_1$ such that $E_1^{(m_1)}\in \cP_b(\phi)$. 
	
	If $E_1^{(m_1)}$ is stable, then we are done. If not, we replace $E$ by $E_1^{(m_1)}$ and repeat the whole process for $E_1^{(m_1)}$. 
	Then there exists a subobject of $E_1^{(m_1)}$ which is in $\cP_b(\phi)$, we denote this object by $E_1^{(m_2)}$.

	By Lemma \ref{Lem:Bri4.4}, The chain 
	$$...\subset E_1^{(m_i)}\subset...E_1^{(m_2)}\subset E_1^{(m_1)}\subset E$$
	in $\cP_b(\phi)$ must terminate. Then there exists $n_1$ such that  $E_1^{(n_1)}$ is stable. Denote this object by $A_1$.

 Step 2. We show that $E$ has a JH filtration. 

  Consider the short exact sequence in $\B^0_{H, -2}$:
 \begin{equation}
 \label{Equ:AEB}
 0\to A_1\to E\to B_1\to 0
 \end{equation}
 
 If $B_1\in \ker(Z_H)\cap \B^0_{H, -2}$, then by further filtering $B_1$ into stable factors, we have a JH filtration. If not, we have $\phi_b(B_1)=\phi_b(E)$.
Then a similar diagram as \eqref{Equ:Dagger} shows that for any 
	short exact sequence 
	\begin{equation}
		\label{Equ:JHfil2}
		0\rightarrow B'_1\rightarrow B_1\rightarrow B_1^{(1)}\rightarrow 0
	\end{equation}
	with $\phi_b(B'_1)\geq\phi_b(B_1^{( 1)})$, we have $\phi_b(B_1^{( 1)})=\phi$. We have either $B_1\in \cP_b(\phi)$, or there exists a short exact sequence in the form of  \eqref{Equ:JHfil2} such that $B'_1\in \ker(Z_H)\cap\B^0_{H, -2}$ and $\phi_b(B'_1)>\phi_b(B_1^{(1)})$. 
 Similar to the case of subobject, by comparing the dimension of $\Hom(\cO_X[1], \_)$ and $\Hom(\cO_\Theta(-1), \_)$, we also have $B_1^{(n_1)}\in \cP_b(\phi)$ for some $n_1$. 

 We abuse notation and denote the kernel of $B_1\to B_1^{(n_1)}$ by $B'_1$. Then $B'_1$ admits a filtration 
 \[
 0=B'_{1, 0}\subset B'_{1, 1}\subset B'_{1, 2}\subset...\subset B'_{1, l_1-1}\subset B'_{1, l_1}= B'_1
 \]
such that each of the quotient factors is stable in $\ker(Z_H)\cap\B^0_{H, -2}$. Denote the cokernel of $B'_{1, i}\to B_1$ by $B^{(n_1)}_{1, i}$, the kernel of $E\to B^{(n_1)}_{1, i}$ by $A^{(n_1)}_{1, i}$, and the kernel of $E\to B_1^{(n_1)}$ by $A_1^{(n_1)}$. Combining the short exact sequence \ref{Equ:AEB}, we have a set of short exact sequences 
\[
0\to A_1\to A^{(n_1)}_{1, i}\to B'_{1, i}\to 0
\]
 Hence we may further refine the map $A_1\to E$ to a filtration 
 \[
 A_1\subset A^{(n_1)}_{1, 1}\subset A^{(n_1)}_{1, 2}\subset...A^{(n_1)}_{1, l_1}=A_1^{(n_1)}\subset E
 \]
 with the last quotient factor isomorphic to $B_1^{(n_1)}$, and each of the previous quotient factors in $\ker(Z_H)\cap \B^0_{H, -2}$ and stable.

	If $B_1^{(n_1)}$ is stable, then we have a JH filtration. If not, 
	 we replace $E$ by $B_1^{(n_1)}$.  
 Then there exists a subobject of $B_1^{(n_1)}$ which is stable. We denote the cokernel by $B_2$. Then there exist 
 \[
 0\to B'_2\to B_2\to B_2^{(n_2)}\to 0
 \]
  where $B'_2\in \ker(Z_H)\cap B^0_{H, -2}$, and $B_2^{(n_2)}\in \cP_b(\phi)$. We can further refine the filtration such that $B'_2$ are stable objects in $\ker(Z_H)\cap B^0_{H, -2}$.  Continue this process and we denote the kernel of $E\to B_i^{(n_i)}$ by $A_i^{(n_i)}$.
   \begin{comment}
   Composing the short exact sequence 
	$$0\rightarrow B_1^{(n_1, m_1)}\rightarrow B_1^{(n_1)}\rightarrow B_2\rightarrow 0$$
	with the map $E\rightarrow B_1^{(n_1)}$, we form a short exact sequence in $\B^0_{H, -2}$:
	$$0\rightarrow A_2\rightarrow E\rightarrow B_2\rightarrow 0.$$
	If $B_2$ is stable, we obtain a JH filtration for $E$. If not, continue the process with $E_2$. Note that $E_2$ fit into the short exact sequence 
	$$0\rightarrow B_1^{(m_1)}\rightarrow E_2\rightarrow B_2\rightarrow 0,$$
	where $E_1^{(m_1)}\in \cP_b((\alpha, \beta))$ for an interval $(\alpha, \beta)$ of length $<1$ such that $\phi$, $\frac{1}{2}$ and $1$ are all in $(\alpha, \beta)$, and $B_2\in\cP_b(\phi)$. Hence $B_2\in \cP_b((\alpha, \beta))$. 
\end{comment}

Note that there exists $\alpha$, $\beta$ such that the interval $(\alpha, \beta)$ of length $<1$, and $\phi$, $\frac{1}{2}$ and $1$ are all in $(\alpha, \beta)$.
	 Then
\[A_1\subset A^{(n_1)}_{1, 1}\subset...\subset A_1^{(n_1)}\subset A_2\subset A^{(n_2)}_{2, 1}\subset...\subset A_2^{(n_2)}\subset A_3\subset...\subset E
 \]
 is a chain in $\cP_b((\alpha, \beta))$. 
By Lemma \ref{Lem:Bri4.4}, this chain must terminate. This is a Jordan-H\"{o}lder filtration of $E$.
%	Thus the finite sequence 
%	$$E_1^{(n_1)}\subset E_1^{(n_1, 1)}\subset...\subset E_1^{(n_1,m_1)}\subset E_2^{(n_2)}\subset...\subset E$$
\end{proof}

\begin{rem}\label{sec:weakstaboriginrmklast}
Note that Proposition \ref{Prop:JHfil} shows that any object $E\in \cP_b(\phi)$ has a JH filtration whose JH factors are in $\cP((0, 1])[k]$ for some $k\in \mathbb{Z}$. It is also worth pointing out that the $\sigma_b^L$-stable objects may not be stable in a general heart $\cP_b((a, a+1])$ for $a\notin \mathbb{Z}$.
\end{rem}

	\paragraph[Weak stability conditions on the $V$-axis] \label{sec:weakstabilityconditiontwostart}  Taking $D_\omega\to 0$ in the central charge formula \eqref{Equ:CentralChargeVD}, we obtain a central charge of the form $$Z_{V_\omega, H}(E)=-\ch_2(E)+V_\omega \ch_0(E)+iH\ch_1(E),$$
where $H=\Theta+ef$. Since $\omega$ does not appear in the formula, we omit $\omega$ in the notation.
\begin{prop}
	\label{Prop:ConstHeartV}
	We have $Z_{V, H}(\B^0_{H, k})\in \mathbb{H}_0$ for $k=-1, -2$. 
 %defines a weak stability function on the heart $\B^0_{H, -2}$. 
Furthermore, we have	

	when $k=-1$,
	$$\kernel (Z_{V, H})\cap \B^0_{H, -1}=\langle\cO_\Theta(-1)[1]\rangle,$$
	when $k=-2$,
	$$\kernel (Z_{V, H})\cap \B^0_{H, -2}= \langle \cO_\Theta(-1)\rangle.$$
\end{prop}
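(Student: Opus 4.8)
The plan is to prove the statement in two parts, following closely the arguments for $Z_H$ in Propositions \ref{prop:weakstabfuncbefore} and \ref{prop:B0HkcapkerZH}; the only new ingredient is the extra summand $V\ch_0$, which turns out to \emph{simplify} matters. First I would show $Z_{V,H}(\Bc^0_{H,k})\subseteq\mathbb{H}_0$. As in the proof of Proposition \ref{prop:weakstabfuncbefore}, it suffices to treat $E\in\Tc^0_{H,k}$ and $E\in\Fc^0_{H,k}$ separately, and for the former to run through the same list of building blocks: a torsion-free sheaf with $\ch_0>0$, a torsion sheaf of dimension $\le 1$ (supported on or off $\Theta$), or a shifted torsion-free sheaf $F[1]$ with $\mu_H(F)<0$ contributes to $\mathbb{H}$ or to $\mathbb{R}_{\le 0}$ exactly as before, since $\Im Z_{V,H}=H\ch_1$ is unchanged; and the $\OO_\Theta(i)$ contributions (for $i>k$, and for $i\le k$ after a shift) land in $\mathbb{R}_{\le 0}$ using $k\in\{-1,-2\}$ precisely as in that proof. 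The only case that is genuinely different is $E=F[1]$ with $F$ a $\mu_H$-semistable torsion-free sheaf of slope $\mu_H(F)=0$: here $Z_{V,H}(F[1])=\ch_2(F)-V\ch_0(F)$, and $\ch_2(F)\le 0$ by the Bogomolov--Gieseker inequality for the movable class $H$ \cite[Theorem 5.1]{greb2016movable} together with the Hodge Index Theorem (valid since $H^2=e>0$), while $-V\ch_0(F)<0$; hence $Z_{V,H}(F[1])\in\mathbb{R}_{<0}$.

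For the kernel, the inclusion $\supseteq$ is the computation $\ch(\OO_\Theta(-1))=(0,\Theta,0)$ (from the Grothendieck--Riemann--Roch formula recorded earlier), which gives $Z_{V,H}(\OO_\Theta(-1))=i\,H\Theta=0$ since $H\Theta=(\Theta+ef)\Theta=0$, together with the fact that $\OO_\Theta(-1)\in\Bc^0_{H,-2}$ and $\OO_\Theta(-1)[1]\in\Bc^0_{H,-1}$ by Proposition \ref{prop:B0HkcapkerZH}. For $\subseteq$, take a nonzero $E\in\Bc^0_{H,k}\cap\kernel Z_{V,H}$ and, writing $\Ac=\Ac^0_H$, use the triangle $\Hc^{-1}_\Ac(E)[1]\to E\to\Hc^0_\Ac(E)$ as in Proposition \ref{prop:B0HkcapkerZH}. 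Since $\Hc^{-1}_\Ac(E)\in\Fc^0_{H,k}=\langle\OO_\Theta(i):i\le k\rangle$ has $\ch_0=0$, $H\ch_1=0$ and $\ch_2\le 0$, we have $Z_{V,H}(\Hc^{-1}_\Ac(E))=-\ch_2(\Hc^{-1}_\Ac(E))\ge 0$; comparing imaginary parts forces $\Im Z_{V,H}(\Hc^0_\Ac(E))=0$, so (using Lemma \ref{lem:AG51-114}) $M:=H^{-1}(\Hc^0_\Ac(E))$ is a $\mu_H$-semistable torsion-free sheaf with $\mu_H(M)=0$ and $T:=H^0(\Hc^0_\Ac(E))$ is a torsion sheaf supported on $\Theta$.

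Comparing real parts then gives
\[
  -\ch_2(T)+\ch_2(M)-V\ch_0(M) = -\ch_2(\Hc^{-1}_\Ac(E)) \ge 0.
\]
Here $\ch_2(T)\ge 0$, because $\Hom(\Hc^0_\Ac(E),\OO_\Theta(i))=0$ for all $i\le k$ forces $T$ to be an extension of a sheaf in $\langle\OO_\Theta(i):i>k\rangle$ by a $0$-dimensional sheaf (as in the proof of Proposition \ref{prop:B0HkcapkerZH}); and $\ch_2(M)-V\ch_0(M)\le 0$ with equality iff $M=0$, since $\ch_2(M)\le 0$ (Bogomolov--Gieseker plus Hodge Index again) and $V\ch_0(M)\ge 0$. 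Hence both terms on the left, as well as $\ch_2(\Hc^{-1}_\Ac(E))$, must vanish, so $M=0$, $\ch_2(T)=0$ and $\ch_2(\Hc^{-1}_\Ac(E))=0$. The vanishing Chern classes then pin down the pieces — $\Hc^{-1}_\Ac(E)\in\langle\OO_\Theta(-1)\rangle$ and $T=0$ when $k=-1$, and $\Hc^{-1}_\Ac(E)=0$ and $T\in\langle\OO_\Theta(-1)\rangle$ when $k=-2$ — and since $\Hc^0_\Ac(E)=T$ (as $M=0$), this gives $E\in\langle\OO_\Theta(-1)[1]\rangle$ for $k=-1$ and $E\in\langle\OO_\Theta(-1)\rangle$ for $k=-2$.

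The one point requiring care is the sign bookkeeping in the last step: one must verify that after splitting $\Re Z_{V,H}(E)$ along the triangle every contribution is $\le 0$, so that the single inequality $\ge 0$ coming from $\Hc^{-1}_\Ac(E)\in\Fc^0_{H,k}$ forces all of them to vanish. The role of the $V\ch_0$ term is precisely to make the torsion-free $\mu_H=0$ contribution \emph{strictly} negative; this is why $\OO_X[1]$ disappears from the kernel (unlike the $Z_H$ case treated in Proposition \ref{prop:B0HkcapkerZH}) and why only the ordinary Bogomolov--Gieseker inequality is needed here, rather than the sharper one of Example \ref{eg:K3nefdivBGineq}.
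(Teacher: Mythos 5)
Your proposal is correct and follows essentially the same route as the paper's proof: decompose objects of $\B^0_{H,k}$ via the torsion pairs into pieces in $\F^0_{H,k}[1]$, shifted $\mu_H$-semistable torsion-free sheaves of slope $\leq 0$, and torsion sheaves, use Lemma \ref{lem:AG51-114} together with the Bogomolov--Gieseker inequality for the movable class $H$ and the Hodge Index Theorem, and observe that the $V\ch_0$ term makes the slope-zero torsion-free contribution strictly negative, which is exactly why $\cO_X[1]$ drops out of the kernel. Your observation that only the ordinary Bogomolov--Gieseker inequality is needed here (the paper cites Corollary \ref{cor:K3surfstrongerBGdeform}, but its displayed estimate uses only the ordinary inequality plus $-V\ch_0<0$) is accurate, and your bookkeeping of real parts along the $\Ac^0_H$-cohomology triangle reproduces the paper's ``from the computation'' step in slightly more explicit form.
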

\begin{proof}
	For any object $E\in \B^0_{H, k}$, there is a short exact sequence 
	$$0\rightarrow F\rightarrow E\rightarrow T\rightarrow 0,$$
	where $F\in \F^0_{H, k}[1]$ and $T\in \T^0_{H, k}$. For an object $T\in\T^0_{H, k}$, $T$ fit into a short exact sequence 
	$$0\rightarrow T_1\rightarrow T\rightarrow T_2\rightarrow 0$$
	where $T_1\in \F^0_{H}[1]$ and $T_2\in \T^0_{H}$. 
 
 Claim 1. $Z_{V, H}(T_1)\in \mathbb{H}$. 

 Since $T_1\in \F^0_H[1]$, we have $T_1[-1]$ is a torsion free sheaf with $\mu_H(\mathrm{HN}_i(T_1[-1]))\leq 0$ for all $i$.
 If $J$ is a $\mu_H$-semistable coherent sheaf with $\mu_H(J)<0$, then $\Im Z_{V, H}(J[1])>0$. If $J$ is a $\mu_H$-stable torsion-free sheaf with $\mu_H(J)=0$, then by Corollary \ref{cor:K3surfstrongerBGdeform}, we have 
	\begin{equation*}
		\begin{split}
		\Re(Z_{V, H}(J[1]))&=\ch_2(J)-V\ch_0(J)\\
		&\leq \frac{\ch_1(J)^2}{2\ch_0(J)}-V\ch_0(J)\\
		&< 0
		\end{split}
	\end{equation*}
Hence $Z_{V, H}(T_1)\in \mathbb{H}$. 

Claim 2. $Z_{V, H}(T_2)\in \mathbb{H}_0$.
If $J$ is a $\mu_H$-semistable coherent sheaf with $\mu_H(J)>0$, then $Z_{V, H}(J)\in \mathbb{H}$. It is enough to consider if $T_2$ is torsion and $\mu_H(T_2)=0$. It is enough to assume that $T_2$ is a pure sheaf supported on dimension $1$. Then by Lemma \ref{lem:AG51-114}, we have $\ch_1(T_2)=n\Theta$, hence $T\in\langle \cO_\Theta(m)\rangle$. Since $(\T_{H, k}, \F_{H, k})$ defines a torsion pair on $\A^0_H$ and $T_2$ is an $\A^0_H$-quotient of $T$ which lies in $\Tc_{H,k}$, we have $T_2 \in \Tc_{H,k}$ as well and hence $T\in \langle\cO_\Theta(m)|m>k\rangle$. 

Since $$\ch_2(\cO_\Theta(m))=m+1,$$ we have for $m>k$, $$Z(\cO_\Theta(m))=-\ch_2(\cO_\Theta(m))<-k-1\in \mathbb{R}_{\leq 0}.$$ 
This implies that $Z_{V, H}(T_2)\in \mathbb{H}_0$.

We are left to consider $F$. %Since $$ch_2(i_*O_\Theta(m))=m+1,$$ we have 
For $m\leq k$ $$Z(\cO_\Theta(m)[1])=\ch_2(\cO_\Theta(m))=m+1\leq k+1\in \mathbb{R}_{\leq 0}.$$ 
Hence $Z_{V, H}(F)\in\mathbb{H}_0$.

 From the computation, we also have for $k=-1$
$$\kernel (Z_{V, H})\cap \B^0_{H, k}= \langle\cO_\Theta(-1)[1]\rangle,$$ 
and for $k=-2$, $$\kernel (Z_{V, H})\cap \B^0_{H, k}= \langle\cO_\Theta(-1)\rangle.$$ 
\end{proof}

We define the phases $\{\phi_{V, H}(K)\}_{K\in \ker(Z_{V, H})\cap \B^0_{H, k}}$ by taking the limit of the phases of $\sigma_{V_\omega, D_\omega}(K)$ as $D_\omega\to 0$.

Since
$$Z_{V_\omega, D_\omega}(\cO_\Theta(-1))=iD_\omega,$$
we have $$\lim\limits_{D_\omega\rightarrow 0}\phi_{V_\omega, D_\omega}(\cO_\Theta(-1))=\frac{1}{2}.$$
Hence for $V\in\mathbb{R}_{>0}$, we define $\phi_{V, H}(\OO_\Theta (-1))=\frac{1}{2}$, which implies $\phi_{V, H}(\cO_\Theta(-1)[1])=\frac{3}{2}$.

We see that it is  only when $k=-2$ that  we have a chance of having  $0<\phi_{V, H}(K)\leq 1$ for all $K \in \kernel (Z_{V,H}) \cap \Bc^0_{H,k}$. Since $\ker(Z_{V, H})\cap \B^0_{H, -2}=\langle \cO_\Theta(-1)\rangle$, for any $K\in \ker(Z_{V, H})\cap \B^0_{H, -2}$, we have
\[
\lim\limits_{D_\omega\rightarrow 0}\phi_{V_\omega, D_\omega}(K)=\frac{1}{2}.
\]
As a result, for $V\in\mathbb{R}_{>0}$, $K\in\ker(Z_{V, H})\cap \B^0_{H, -2}$ we define $\phi_{V, H}(K)=\frac{1}{2}$. 
 Then the weak see-saw property follows automatically. To summarize, the triple 
$$\sigma_{V, H}=(Z_{V, H}, \B^0_{H, -2}, \{\phi_{V, H}(K)\}_{K\in\ker(Z_{V, H})\cap\B^0_{H, -2}})$$
defines a weak stability function. 

The HN property of $\sigma_{V, H}$ can be shown exactly the same way as Proposition \ref{Prop:HNorigin}, and so $\sigma_{V,H}$ is a weak stability condition in the sense of Definition \ref{Def:Weak}. We denote the slicing of $\sigma_{V, H}$ by $\cP_{V, H}$.

\begin{rem}\label{rmk:weakstabilityconditiontwoend}
From the computation in Lemma \ref{lem:AG51-174-1}, we know that $\Ext^1(\cO_\Theta(-1), \cO_\Theta(-1))=0$. Hence following the same proof of Propostiion \ref{Prop:JHfil}, we have any object $E\in \cP_{V, H}(\phi)$ has a JH filtration whose stable factors are either $\sigma_{V, H}$-stable objects in $\cP_{V, H}(\phi)$ or stable objects in $\ker(Z_{V, H})\cap \B^0_{H, -2}$.
\end{rem}

\paragraph[Weak stability conditions on the $D$-axis] \label{sec:weakstabilityconditionthreestart}  Taking $V_\omega\to 0$ in the central charge formula \eqref{Equ:CentralChargeVD}, we obtain a central charge of the form 
$$Z_{D_\omega}(E)=-\ch_2(E)+i\omega \ch_1(E)$$
where $\omega=\Theta+(D_\omega+e)f$. If $\omega$ is clear from the context, we omit $\omega$ in the notation. 
\begin{prop}
	\label{Prop:ConstHeartD}
	Suppose $\omega$ is ample.  We have $Z_{D}(\Coh^{\omega, 0})\in\mathbb{H}_0$.
 %defines a weak stability function on the heart $Coh^{\omega, 0}$. 
 Furthermore, $\kernel (Z_{D})\cap \Coh^{\omega, 0}=\langle\cO_X[1]\rangle$.
	\end{prop}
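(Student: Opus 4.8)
The plan is to mimic the structure of the proofs of Propositions \ref{Prop:ConstHeartV} and \ref{prop:weakstabfuncbefore}, adapted to the heart $\Coh^{\omega,0}$, which is the standard tilt of $\Coh(X)$ at slope $0$ with respect to the ample class $\omega = \Theta + (D_\omega+e)f$. Recall that $\Coh^{\omega,0} = \langle \Fc^0_\omega[1], \Tc^0_\omega \rangle$, where $\Tc^0_\omega$ consists of sheaves whose $\mu_\omega$-HN factors have positive slope and $\Fc^0_\omega$ those with nonpositive slope. First I would take an arbitrary $E \in \Coh^{\omega,0}$ and use the torsion pair to reduce to the two cases $E \in \Tc^0_\omega$ and $E = F[1]$ with $F \in \Fc^0_\omega$.

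For $E \in \Tc^0_\omega$: if $\ch_0(E) > 0$ then $\Im Z_D(E) = \omega\ch_1(E) > 0$ since the $\mu_\omega$-minimal slope is positive, so $Z_D(E) \in \mathbb{H}$; if $E$ is a torsion sheaf, it is either supported in dimension $1$ (then $\omega \ch_1(E) > 0$ because $\omega$ is ample, giving $Z_D(E)\in\mathbb{H}$) or supported in dimension $0$ (then $Z_D(E) = -\ch_2(E) < 0$, landing in $\mathbb{R}_{<0}$); the general object of $\Tc^0_\omega$ is an extension of these. For $E = F[1]$ with $F \in \Fc^0_\omega$ torsion-free: if some $\mu_\omega$-HN factor of $F$ has strictly negative slope then $\Im Z_D(F[1]) = -\omega\ch_1(F) > 0$; the remaining case is $F$ itself $\mu_\omega$-semistable with $\mu_\omega(F) = 0$, where $\Im Z_D(F[1]) = 0$ and one must check $\Re Z_D(F[1]) = \ch_2(F) \leq 0$. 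This is exactly where the Bogomolov--Gieseker inequality \eqref{eq:HIT-surf-mov} for $\mu_\omega$-semistable sheaves (valid since $\omega$ is ample, hence movable) gives $\ch_2(F) \leq \ch_1(F)^2/(2\ch_0(F))$, and the Hodge Index Theorem (applicable since $\omega^2 > 0$) gives $\ch_1(F)^2 \leq 0$; so $Z_D(F[1]) \in \mathbb{R}_{\leq 0}$. Assembling via extension-closedness of $\mathbb{H}_0$ under the allowed configurations yields $Z_D(\Coh^{\omega,0}) \subseteq \mathbb{H}_0$.

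For the kernel computation, take nonzero $E \in \Coh^{\omega,0} \cap \kernel Z_D$ and apply $Z_D$ to the cohomology objects $\Hc^{-1}(E)[1] \in \Fc^0_\omega[1]$ and $\Hc^0(E) \in \Tc^0_\omega$ relative to this tilt, both forced to be in $\kernel Z_D$. From $\Im Z_D = 0$ on each, the case analysis above forces: $\Hc^0(E) \in \Tc^0_\omega$ has $\omega\ch_1 = 0$, so it is a $0$-dimensional sheaf, but then $\Re Z_D = -\ch_2 < 0$ unless it vanishes; hence $\Hc^0(E) = 0$. And $M := \Hc^{-1}(E)$ is $\mu_\omega$-semistable torsion-free with $\mu_\omega(M) = 0$ and $\ch_2(M) = 0$; combining Bogomolov--Gieseker with Hodge Index forces $\ch_1(M)^2 = 0$, hence $\ch_1(M) \equiv 0$ numerically, and since $X$ is K3, $\ch_1(M) = 0$. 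Then running the Jordan--H\"older/double-dual argument verbatim as in the proof of Proposition \ref{prop:B0HkcapkerZH} — each $\mu_\omega$-JH factor $M_i$ has rank one with $\ch_2(M_i) = 0$, is forced to be locally free by the strict inequality \eqref{eq:AG51-174-1} for non-locally-free sheaves, and then $c_1(M_i) = 0$ forces $M_i \cong \OO_X$ — shows $M \in \langle \OO_X \rangle$, so $E = M[1] \in \langle \OO_X[1]\rangle$.

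The main obstacle I anticipate is the sharper input: the argument that each rank-one JH factor must actually be $\OO_X$ (rather than merely a line bundle with $c_1^2 = 0$) relies on the stronger Bogomolov--Gieseker inequality \eqref{eq:K3surfstrongerBG} being available for $\mu_\omega$-stable torsion-free sheaves, which is Proposition \ref{prop:K3surfstrongerBG} directly since $\omega$ is ample here — so unlike the $Z_H$ case one does not need the nef-divisor extension of Corollary \ref{cor:K3surfstrongerBGdeform}, and the argument is in fact cleaner. The only genuine care needed is bookkeeping: making sure that ``$\ch_1 \equiv 0$ numerically implies $\ch_1 = 0$'' really uses that $\NS(X)$ has no torsion for a K3 surface (the intersection form is nondegenerate), and that the containment $\langle\OO_X\rangle$ is closed under the extensions appearing, which follows since $\Ext^1(\OO_X,\OO_X) = H^1(\OO_X) = 0$ on a K3.
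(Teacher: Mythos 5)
Your proposal is correct and follows essentially the same route as the paper's own proof: reduce the positivity claim to $\mu_\omega$-semistable torsion-free sheaves of slope zero via the Bogomolov--Gieseker inequality and the Hodge Index Theorem, then for the kernel use the sharper inequality \eqref{eq:K3surfstrongerBG} (valid directly since $\omega$ is ample) to force rank one and conclude $F \simeq \OO_X$. Your write-up merely spells out the cohomology-object decomposition and the Jordan--H\"older step that the paper leaves compressed, so no gap to report.
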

\begin{proof}
	We only need to show $Z_D(E[1])\in \mathbb{H}_0$ for $E$ a $\mu_{\omega}$-semistable torsion-free sheaf with $\mu_\omega(E)=0$. In this case,
	\begin{equation*}
		Z_{D}(E[1])=\ch_2(E)\leq \frac{\ch_1(E)^2}{2\ch_0(E)}\in\mathbb{R}_{\leq 0}.
	\end{equation*}
Hence $Z_D(E[1])\in \mathbb{H}_0$. 

Furthermore, if $E\in \ker(Z_{D})\cap \Coh^{\omega, 0}$, then we have $E=E'[1]$ for $E'\in \Coh (X)$. %such that $ch_1(F)\cdot \omega=0$ and $ch_2(F)=0$. 
By the generalized Bogomolov-Gieseker inequality on K3 surfaces, (e.g.\ see \cite[Section 6]{ABL} or \eqref{eq:K3surfstrongerBG}), 
if $F$ is a $\mu_\omega$-stable object, then 
\begin{equation}
\label{Equ:GenBG}
\ch_2(F)\leq \frac{\ch_1(F)^2}{2\ch_0(F)}-\ch_0(F)+\frac{1}{\ch_0(F)}.
\end{equation}
If furthermore $F[1]\in \kernel (Z_{D})\cap \Coh^{\omega, 0}$, then $\ch_0(F)=1$. This implies that $F$ is an ideal sheaf. Since $\ch_1(F)\cdot\omega=0$ and $\ch_2(F)=0$, we have $F\simeq \cO_X$. Hence $E'\in\langle\cO_X\rangle$.
	\end{proof}
	
%	Consider the central charge defined by 
%$$Z_{V, D}(E)=-ch_2(E)+Vch_0(E)+i(\Theta+(D+e)f)\cdot ch_1(E).$$
We define the phases $\{\phi_D(K)\}_{K\in \kernel (Z_D)\cap \Coh^{\omega, 0}}$ by taking the limit of the phases of $Z_{V_\omega, D_\omega}(K)$ as $V_\omega\to 0$.
We have 
$$Z_{V_\omega, D_\omega}(\cO_X[1])=-V_\omega.$$
Hence $$\lim\limits_{V_\omega\rightarrow 0}\phi_{V_\omega, D_\omega}(\cO_X[1])=1.$$ For $D\in\mathbb{R}_{>0}$, we define $\phi_{D}(K)=1$ for any $K \in \kernel (Z_D) \cap \Coh^{\omega, 0} = \langle \OO_X[1]\rangle$.

\begin{prop}\label{prop:HNDaxis-Q}
	Given $D\in\mathbb{Q}_{>0}$, the triple
	\begin{equation*}
	\sigma_D=(Z_{D}, \Coh^{\omega, 0}, \{ \phi_D(K) \}_{K \in \kernel (Z_D) \cap \Coh^{\omega,0}}   )
	    \end{equation*}
 satisfies the HN property, hence defines a weak stability condition.
\end{prop}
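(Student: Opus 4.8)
The plan is to deduce the Harder--Narasimhan property from Proposition~\ref{Prop:HNfil}, along the lines of the proof of Proposition~\ref{Prop:HNorigin}; the one genuinely new ingredient is the rationality hypothesis on $D$, which is what makes the relevant invariants discrete. First I would dispose of the preliminaries. Since $D\in\QQ_{>0}$ we have $D+e>e=-\Theta^2$, so $\omega=\Theta+(D+e)f$ is ample by Lemma~\ref{lem:AG50-132-1}; hence Proposition~\ref{Prop:ConstHeartD} applies and gives that $Z_D$ is a weak stability function on $\Coh^{\omega,0}$ with $\kernel(Z_D)\cap\Coh^{\omega,0}=\langle\OO_X[1]\rangle$. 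Every object of this kernel has been assigned phase $1\in(0,1]$, so condition~(i) of Definition~\ref{Def:Weak} holds, and the weak see-saw property~(ii) is automatic (any short exact sequence inside the kernel has all three phases equal to $1$). It therefore remains to check condition~(iii), and by Proposition~\ref{Prop:HNfil} it suffices to verify the two chain conditions there. The crucial point is that, because $D$ is rational, $\omega$ is a rational class, so $\Im Z_D=\omega\cdot\ch_1$ takes values in a discrete subgroup $\tfrac1N\ZZ$ of $\RR$ (and $\Re Z_D=-\ch_2$ is discrete as well).

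For chain condition~(i): given $\cdots\subset E_{i+1}\subset E_i\subset\cdots$ with $\phi(E_{i+1})>\phi(E_i/E_{i+1})$ for all $i$, the exact sequences $0\to E_{i+1}\to E_i\to E_i/E_{i+1}\to 0$ together with $\Im Z_D\ge 0$ on $\Coh^{\omega,0}$ show that $\Im Z_D(E_i)$ is non-increasing, hence eventually constant by discreteness, so $\Im Z_D(E_i/E_{i+1})=0$ for $i\gg 0$. For such $i$ any nonzero $E_i/E_{i+1}$ has $Z_D(E_i/E_{i+1})\in\RR_{<0}$ or lies in $\kernel(Z_D)$, and in either case $\phi(E_i/E_{i+1})=1$, contradicting $\phi(E_{i+1})>1$. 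Hence no such infinite chain exists.

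For chain condition~(ii) I would argue that the heart $\Coh^{\omega,0}$ is noetherian, which makes the condition automatic: given quotients $E_1\twoheadrightarrow E_2\twoheadrightarrow\cdots$ the kernels $L_i:=\kernel(E_1\twoheadrightarrow E_i)$ form an ascending chain $L_1\subseteq L_2\subseteq\cdots$ in $\Coh^{\omega,0}$ with $E_i\cong E_1/L_i$, so once $(L_i)$ stabilizes so does $(E_i)$. To see that $\Coh^{\omega,0}$ is noetherian one applies the criterion of \cite[Lemma~5.5.2]{BVdB03} to the torsion pair defining $\Coh^{\omega,0}$: an ascending chain of torsion-free sheaves in the $\mu_\omega$-slope-$\le 0$ class with successive quotients in the $\mu_\omega$-slope-$>0$ class has $\omega\cdot\ch_1$ bounded above (by $0$) and non-decreasing, hence eventually constant (again using rationality of $\omega$), so those quotients are torsion with $\omega\cdot\ch_1=0$, i.e.\ $0$-dimensional (here ampleness of $\omega$ enters); the chain is then confined between its first term and that term's reflexive hull, and stabilizes. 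Alternatively one can stay self-contained and repeat the cohomology-sheaf bookkeeping of the proof of Proposition~\ref{Prop:HNorigin} verbatim, replacing $Z_H,\B^0_{H,-2}$ by $Z_D,\Coh^{\omega,0}$ and the kernel objects $\OO_\Theta(-1)$ by $\OO_X[1]$, and using $\Ext^1(\OO_X[1],\OO_X[1])=H^1(\OO_X)=0$ (valid since $X$ is K3) in the final $\Hom$-dimension step.

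The main obstacle is chain condition~(ii): once the rationality of $D$ supplies the discreteness of $Z_D$ everything else is formal, but controlling ascending chains of subobjects in the tilted abelian category $\Coh^{\omega,0}$ is where the real content lies — either in establishing that this heart is noetherian, or in carrying out the cohomology-sheaf computation of Proposition~\ref{Prop:HNorigin} in the present setting.
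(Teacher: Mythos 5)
Your proposal is correct and follows essentially the same route as the paper: reduce to Proposition \ref{Prop:HNfil}, use rationality of $D$ to get discreteness of $\Im Z_D$ and the kernel description $\kernel(Z_D)\cap\Coh^{\omega,0}=\langle\OO_X[1]\rangle$ for chain condition (i), and dispose of chain condition (ii) via noetherianity of $\Coh^{\omega,0}$. The only differences are cosmetic: the paper simply cites noetherianity of the tilted heart for ample $\omega$ (\cite[Lemma 6.17]{MSlec}) where you re-prove it via the Van den Bergh criterion, and in condition (i) you conclude directly from the fact that every kernel object has phase $1$, whereas the paper runs the extra $\Hom(-,\OO_X[1])$-dimension count using $\Ext^1(\OO_X[1],\OO_X[1])=0$; your shortcut is valid here since $\sigma_D$ assigns phase $1$ to all of $\kernel(Z_D)\cap\Coh^{\omega,0}$.
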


\begin{proof}
	The argument follows the proof of Lemma 2.18 in \cite{piyaratne2015moduli}.
		
Since $\omega$ is ample, it is well known that the abelian category $\Coh^{\omega, 0}$ is noetherian (e.g.\  see  \cite[Lemma 6.17]{MSlec}. As a result, we only need to check condition (i) in  Proposition \ref{Prop:HNfil}. Assume we have such a chain of subobjects in $\Coh^{\omega, 0}$ as in condition (i). Consider the short exact sequence in $\Coh^{\omega, 0}$:
		$$0\rightarrow E_{i+1}\rightarrow E_i\rightarrow F_i\rightarrow 0.$$
		Since $\Im(Z_D(F_i))\geq 0$, we have $\Im(Z_D(E_{i}))\geq \Im(Z_D(E_{i+1}))$. 
		The assumption $D_\omega\in \mathbb{Q}_{>0}$ implies that $\Im(Z_D)$ is discrete. Hence $\Im(Z_D(E_i))$ is constant for $i\gg 0$. Then $\Im(Z_D(F_i))=0$ for $i>n$ for some $n$. 
		
		Let $i>n$.  If $F_i\notin \ker(Z_D)\cap \Coh^{\omega, 0}$, then $\phi_D(F_i)=1$, contradicting  $\phi_D(E_{i+1})>\phi_D(F_i)$.
		%then $E_i\notin \K_\B$ and we have $\Re(Z'_0(F_i))<0$, hence $\Re(Z'_0(E_{i+1}))>\Re(Z'_0(E_{i}))$, 
		
		So we must have $F_i\in \ker(Z_D)\cap \Coh^{\omega, 0}=\langle \cO_X[1]\rangle$ for $i>n$. 
Since $\Ext^1(\cO_X[1], \cO_X[1])=0$,
 the dimension of $\Hom(E_{i}, \cO_X[1])$ decreases as $i$ increases. This implies that $F_i=0$ for $i$ large enough. This proves the HN property for $\sigma_D$ when $D\in \mathbb{Q}_{>0}$.
\end{proof}

\begin{comment}
\begin{proof}
   Since $\omega$ is ample, we know that $\Coh^{\omega, 0}$ is a noetherian abelian category. In addition, we have $\Ext^1(\cO_X[1], \cO_X[1])=0$, 
    so essentially the same argument of Proposition \ref{Prop:HNVaxis} shows the HN property for $\sigma_D$
    - the only difference is that in the proof of Proposition  \ref{Prop:HNVaxis}, we used the integrality (hence discreteness) of $\Im Z_{V,H}$ to conclude $\Im Z_{V,H}(F_i)$  must vanish for $i \gg 0$.  In the case of Proposition \ref{prop:HNDaxis-Q}, since $\omega$ is of the form $\omega = \Theta + (D_\omega + e)f$, in order for $\Im Z_{D_\omega}=\omega \ch_1$ to take on discrete values only, we need to impose the requirement  $D_\omega\in \mathbb{Q}_{>0}$.
\end{proof}
\end{comment}

 \paragraph[Weak stability condition after the relative Fourier-Mukai transform] \label{sec:weakstabilityconditionfourstart}  Recall that on a smooth projective surface $X$, we usually denote the central charge of a  standard Bridgeland stability condition as
  \begin{align}
  Z_{\omega, B}&=-\int_X e^{-(B+i\omega)}\ch(E) = -\int_X e^{-i\omega}\ch^B(E) \notag\\
  &= -\ch_2^B(E) + V_\omega \ch_0^B(E) + iR_\omega (\Theta + (D_\omega + e)f) \ch_1^B(E) \label{eq:Zwbcceq-RDV}
\end{align}
 for $\mathbb{R}$-divisors $\omega$ and $B$.  To construct a heart that pairs with $Z_{\omega, B}$ to form a stability condition on $D^b(X)$, we usually consider  the torsion pair $(\Tc_{\omega, B}, \Fc_{\omega, B})$ where
 \begin{align*}
     \Tc_{\omega, B}&=  \langle E \in \Coh (X) : \mu_{\omega, B, \mathrm{min}}(E) > 0 \rangle \\
     \Fc_{\omega, B}&=  \langle E \in \Coh (X) : \mu_{\omega, B, \mathrm{max}}(E) \leq 0 \rangle.
 \end{align*}
 The heart $\Ac_{\omega, B} = \langle \Fc_{\omega, B}[1], \Tc_{\omega, B}\rangle$ then pairs with $Z_{\omega, B}$ to form a Bridgeland stability condition on $D^b(X)$.
 
Now let us return to the case of $X$ being a Weierstra{\ss} elliptic surface. Given $\omega$ and $B$, when attempting to solve the central charge equation 
 \begin{equation*}
  Z_{\omega', B'}(\Phi (E)) = T Z_{\omega, B}(E) \text{\quad for all $E \in D^b(X)$} 
 \end{equation*}
 for some $T \in \mathrm{GL}^+\!(2,\mathbb{R})$ and $\omega', B'$, it is easy to see from the solution in \cite[8.5]{Lo20}, that when $\omega^2$ is small, the above equation admits a solution where $V_{\omega'}$, i.e.\ the coefficient of $\ch_0$ in $Z_{\omega', B'}$, is forced to be negative.  This prompts us to consider the central charge with Todd class on a Weierstra{\ss} elliptic K3 surface $X$:
 \begin{align}
Z^{td}_{\omega', B'} (E) &= (e^{\omega'+B'}, \nu(E))
			=-\int e^{-i\omega'} \ch^{B'}(E)\sqrt{\mathrm{td}(X)} \notag \\
   &= -\ch_2^{B'}(E) + (\tfrac{(\omega')^2}{2}-1)\ch_0^{B'}(E) + i\omega' \ch_1^B(E) \notag \\
   &= -\ch_2^{B'}(E) + (V_{\omega'} -1)\ch_0^{B'}(E) + i\omega' \ch_1^{B'}(E). \label{Equ:CenCharTd}
 \end{align}

In Appendix \ref{sec:app-solvingcce}, we show that when we impose the relations

 \begin{comment}
 We consider the central charge which solves the equation  \cite[(8.5.1)]{Lo20} for central charges of the form:
 $$Z_{\omega, B}=-\int e^{-i\omega}\ch^B(E).$$
 %by $\sigma_{\omega', B'}=(Z_{\omega', B'}, \A_{\omega', B'})$. 
	From  \cite[(8.5.1)]{Lo20}, we observe that the coefficient of $\ch_0(E)$ in the corresponding central charge is always greater or equal to $-2$, but can be negative. Then up to multiplying by a diagonal matrix, we can always assume it is of the following form:
	\begin{equation}
		\label{Equ:CenCharTd}
		\begin{split}
			Z^{td}_{\omega', B'}(E)&=(e^{\omega'+B'}, \nu(E))\\
			&=-\int e^{-i\omega'} \ch^{B'}(E)\sqrt{\mathrm{td}(X)}.
		\end{split}
	\end{equation}
	%where $(\_, \_)$ is the Mukai pairing, and $$\nu(E)=ch(E)\sqrt{\text{td}(X)}=(r(E), c_1(E), ch_2(E)+r(E))$$ is the Mukai vector associated to $E$. 

	%We assume that $Z_{\omega, B}$ is of the form in Equation \ref{Equ:CenCharNoTd}, and $Z_{\omega', B'}$ is of the form in Equation \ref{Equ:CenCharTd}.  
	%Equation 8.5.1 for $\Phi$ has a solution for $\omega'$ and $B'$ lie in $<\Theta, f>$. 
 
	Using the $RDV$-coordinates for $\omega'$ and $B'$, Equation 8.5.1 becomes:
\end{comment}
%When we impose the relations
	\begin{equation}
		\begin{split}
			\label{Equ:RDVNoTd_Td}
			D_{\omega'}&=\frac{1}{2}(R_{\omega}^2+R_{B}^2)(2D_{\omega}+e)\\
			&=V_{\omega}+\frac{1}{2}R_{B}^2(2D_{\omega}+e).\\
			R_{B'}&=-\frac{R_{B}(2D_{\omega}+e)}{2D_{\omega'}+e}.\\
			V_{\omega'}
   %&=D_{\omega}-\frac{1}{2}R_{B}^2(2D_{\omega}+e)+1\\
			&=D_{\omega}-\frac{1}{2}\frac{R_{B}^2(2D_{\omega}+e)^2}{2D_{\omega'}+e}+1\\
			R_{B'}D_{B'}&=-R_{B'}+R_BD_B+R_B-1
		\end{split}
	\end{equation} 
there is an appropriate $T\in \mathrm{GL}^+\!(2,\mathbb{R})$ such that the slightly different central charge equation
 \begin{equation}\label{eq:cce-var1}
Z_{\omega', B'}^{td}(\Phi (E)) = T Z_{\omega, B}(E) \text{\quad for all $E \in D^b(X)$} 
 \end{equation}
holds.

\paragraph We are interested in 
%Later, we will solve 
solving the equation \eqref{eq:cce-var1} on an elliptic K3 surface for the case  $B=-\alpha$ where $\alpha := c_1(L)=\Theta + (D_\alpha + e)f$.  In this case, we have $R_{B}=-R_\alpha=-1$ and $D_{B}=D_\alpha$, and  \eqref{Equ:RDVNoTd_Td} simplifies to
	\begin{equation}
		\label{Equ:PsiZ}
		\begin{split}
			D_{\omega'} &=V_{\omega}+D_{\omega}+1\\
			R_{B'} &=\frac{D_{\omega}+1}{D_{\omega}+V_{\omega}+2}\\
			V_{\omega'} &=\frac{D_{\omega}V_{\omega}-1}{D_{\omega}+V_{\omega}+2}+1\\
			R_{B'}D_{B'} &=-\frac{D_{\omega}+1}{V_{\omega}+D_{\omega}+2}-(D_\alpha+2).
		\end{split}
	\end{equation}
	
Then equations in \eqref{Equ:PsiZ} define a map from
	$\mathbb{R}^4$ to $\mathbb{R}^4$. 
	We denote this map by $\Phi_Z$.

In  formula \eqref{eq:Zwbcceq-RDV} for the central charge $Z_{\omega, B}$, if we choose the parameters 
\[
D_\omega = 0, \,\,\, V_\omega = 0, \,\,\,  R_\omega = 1, \,\,\, B=-\alpha
\]
 then $Z_{\omega, B}$ reduces to $Z_H ((-)\otimes \OO_X(-B))$ where  $Z_H$ is as defined in \ref{section:weakstaborigin}.  With these choices, the relations \eqref{Equ:PsiZ} give
 \[
 D_{\omega'}=1,\,\,\, R_{B'}=\tfrac{1}{2},\,\,\, V_{\omega'}=\tfrac{1}{2}, \,\,\, R_{B'}D_{B'}=-D_\alpha -\tfrac{5}{2}
 \]
 which in turn give
 \begin{align*}
  \omega' &= R_{\omega'}(\Theta + (D_{\omega'}+e)f) = \tfrac{1}{2}(\Theta + 3f) \text{ where $R_{\omega'}=\sqrt{\frac{V_{\omega'}}{D_{\omega'}+\tfrac{e}{2}}}$} \\
  B' &= \tfrac{1}{2}(\Theta + (-2D_\alpha -3)f).
 \end{align*}
 In summary, with the RDV coordinates for $\omega, B, \omega', B'$ chosen as above, the equation 
 \begin{equation}\label{eq:AG52-47-1}
Z_{\omega', B'}^{td}(\Phi (E)) = T Z_H(E\otimes \OO_X(-B)) \text{\quad for all $E \in D^b(X)$} 
 \end{equation}
holds for the appropriate $T$.

In what follows, we will  write $\omega_0', B_0'$ to denote the specific $\omega', B'$ above, i.e.\
\begin{equation}
		\label{Equ:SpecialPoint}
		\begin{split}
			\omega'_0&=\tfrac{1}{2}(\Theta+3f)\\
			B'_0&=\tfrac{1}{2}(\Theta+(-2D_{\alpha}-3)f).
		\end{split}
	\end{equation}
We will   informally think of the left-hand side of \eqref{eq:AG52-47-1} as ``$\Phi_Z(Z_{H})$'' and denote it by $Z'_0:=Z^{td}_{\omega'_0, B'_0}$.  Also note that $\omega_0'$ is ample. %, the formula for $Z'_0$ is given by equation \eqref{Equ:CenCharTd}, where 
	
	%Using the RDV coordinate, we have $D_{\omega'}=1$, $R_{B'}=\frac{1}{2}$ and $D_{B'}=-2D_\alpha-5$. 
	%We simplify the notation and denote $Z_{\omega'_0, B'_0}$ by $Z'_0$.
	
	%Consider the objects in $\ker(Z'_0)\cap Coh^{\omega'_0, B'_0}$.

\begin{prop}
\label{prop:weakstabf-afttrans}
Using the notations above, we have $Z'_0(\Coh^{\omega'_0, B'_0})\in \mathbb{H}_0$.
\end{prop}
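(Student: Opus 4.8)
The plan is to verify directly that $Z'_0 = Z^{td}_{\omega'_0, B'_0}$ is a weak stability function on the tilted heart $\Coh^{\omega'_0, B'_0} = \langle \Fc_{\omega'_0, B'_0}[1], \Tc_{\omega'_0, B'_0}\rangle$, following the familiar pattern for central charges on tilted hearts. Since $\mathbb{H}_0 = \mathbb{H} \cup \RR_{\leq 0}$ is closed under addition and every object of $\Coh^{\omega'_0, B'_0}$ is an extension of an object of $\Tc_{\omega'_0, B'_0}$ by one of $\Fc_{\omega'_0, B'_0}[1]$, it suffices to check $Z'_0(E) \in \mathbb{H}_0$ for $E$ in each of these two subcategories.

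For $E \in \Tc_{\omega'_0, B'_0}$ the check is routine, splitting into the generating cases: $E$ a $0$-dimensional sheaf, in which case $Z'_0(E) = -\ch_2(E) \in \RR_{<0}$; $E$ a torsion sheaf supported in dimension $1$, in which case $\Im Z'_0(E) = \omega'_0 \ch_1(E) > 0$ because $\omega'_0$ is ample; and $E$ a $\mu_{\omega'_0,B'_0}$-semistable torsion-free sheaf of positive slope, in which case $\Im Z'_0(E) = \omega'_0\ch_1^{B'_0}(E) > 0$. Extensions of such objects remain in $\mathbb{H}_0$.

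The real content is the case $E = F[1]$ with $F \in \Fc_{\omega'_0, B'_0}$. Since every Harder-Narasimhan factor of $F$ has $\mu_{\omega'_0, B'_0}$-slope $\leq 0$, we get $\Im Z'_0(F[1]) = -\omega'_0\ch_1^{B'_0}(F) \geq 0$, and if this is strictly positive we are done. Otherwise $\omega'_0\ch_1^{B'_0}(F) = 0$, which forces $F$ to be $\mu_{\omega'_0, B'_0}$-semistable of slope $0$ --- equivalently $\mu_{\omega'_0}$-semistable, since twisting by a $B$-field only translates the slope. Recalling from \eqref{Equ:PsiZ} that $V_{\omega'_0} = \tfrac12$, what remains is to show $\Re Z'_0(F[1]) = \ch_2^{B'_0}(F) + \tfrac12\ch_0(F) \leq 0$. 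I would pass to the $\mu_{\omega'_0}$-Jordan-H\"{o}lder factors $F_j$ of $F$, which may be taken $\mu_{\omega'_0}$-stable, torsion-free, and with $\omega'_0\ch_1^{B'_0}(F_j) = 0$, and prove $\ch_2^{B'_0}(F_j) + \tfrac12\ch_0(F_j) \leq 0$ for each $j$; summing then gives the claim. For $\ch_0(F_j) \geq 2$ this follows from the sharper Bogomolov-Gieseker inequality \eqref{eq:K3surfstrongerBG} of Proposition \ref{prop:K3surfstrongerBG} combined with the Hodge Index Theorem --- the latter giving $(\ch_1^{B'_0}(F_j))^2 \leq 0$ since $\omega'_0$ is ample with $\omega'_0\ch_1^{B'_0}(F_j) = 0$ --- which collapses the bound to $-\tfrac12\ch_0(F_j) + \tfrac{1}{\ch_0(F_j)} \leq 0$.

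The main obstacle, and the only place where the specific values of $\omega'_0, B'_0$ enter, is the rank-one case $\ch_0(F_j) = 1$: here the sharper Bogomolov-Gieseker inequality only yields $\ch_2^{B'_0}(F_j) \leq 0$, which is not enough. To handle it I would write $F_j = I_W \otimes M$ for a line bundle $M$ and a $0$-dimensional subscheme $W$, so that $\ch_2^{B'_0}(F_j) = \tfrac12(c_1(M) - B'_0)^2 - \ell(W)$, and then use that $B'_0 = \tfrac12(\Theta + (-2D_\alpha - 3)f)$ has half-integral $\Theta$-coefficient. Consequently $c_1(M) - B'_0 \neq 0$ in $\NS(X)_\RR$; moreover $(c_1(M) - B'_0)^2$ is an integer, since $(B'_0)^2 = -2 - D_\alpha \in \ZZ$ (using $D_\alpha \in \ZZ$, $\Theta^2 = -2$, $\Theta f = 1$, $f^2 = 0$) and $2 c_1(M) B'_0 = c_1(M)\Theta + (-2D_\alpha-3)\,c_1(M)f \in \ZZ$. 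As $\omega'_0(c_1(M) - B'_0) = 0$, the Hodge Index Theorem forces this integer to be strictly negative, hence $\leq -1$, so $\ch_2^{B'_0}(F_j) \leq -\tfrac12 - \ell(W) \leq -\tfrac12 = -\tfrac12\ch_0(F_j)$. This closes the last case, and summing the per-factor inequalities finishes the proof.
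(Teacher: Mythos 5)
Your proposal is correct and follows essentially the same route as the paper's proof: reduce to a $\mu_{\omega'_0,B'_0}$-stable torsion-free sheaf of twisted slope zero (shifted by $[1]$), handle rank $\geq 2$ with the sharper Bogomolov--Gieseker inequality \eqref{eq:K3surfstrongerBG} plus the Hodge Index Theorem, and in the rank-one case use integrality of $(\ch_1^{B'_0})^2$ together with the fact that $B'_0$ is not an integral class to force $(\ch_1^{B'_0})^2 \leq -1$. Your only cosmetic deviations are making the Jordan--H\"older reduction explicit and writing the rank-one sheaf as $I_W \otimes M$ instead of invoking the BG inequality at $\ch_0 = 1$, which amounts to the same estimate.
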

\begin{proof}
Since $\omega_0'$ is ample, it is easy to see that if $E\in \Coh(X)$ is a torsion sheaf, or is a  slope semistable torsion-free sheaf with $\mu_{\omega'_0, B'_0}(E)>0$, then $Z'_0(E)\in \mathbb{H}$. Also if $E\in \Coh(X)$ is slope semistable with $\mu_{\omega'_0, B'_0}(E)<0$, then $Z'_0(E[1])\in \mathbb{H}$.  It remains to check that  when $E\in \Coh(X)$ is  a slope stable torsion-free sheaf with $\mu_{\omega'_0, B'_0}(E)=0$, \cblue{we have $Z_0'(E[1]) \in \mathbb{H}_0$, i.e. $\Re(Z'_0(E[1]))\leq 0$.} We have 
\begin{equation*}
\begin{split}
\Re(Z'_0(E[1]))&=\ch_2^{B'_0}(E)-(V_{\omega'_0}-1)\ch_0^{B'_0}(E)\\
&=\ch_2^{B'_0}(E)+\frac{1}{2}\ch_0^{B'_0}(E).
\end{split}
\end{equation*}

By the generalized BG inequality on K3 surfaces in Proposition \ref{prop:K3surfstrongerBG}, we have 
\[
\cblue{\Re(Z'_0(E[1]))}\leq \frac{(\ch_1^{B'_0}(E))^2}{2\ch_0(E)}-\frac{1}{2}\ch_0(E)+\frac{1}{\ch_0(E)}.
\]
By Hodge index theorem, we have $(\ch_1^{B'_0}(E))^2\leq 0$, hence $\Re(Z'_0(E[1]))< 0$ when $\ch_0(E)\geq 2$. We only need to consider the case when $\ch_0(E)=1$.
In this case we have 
\[
\cblue{\Re(Z'_0(E[1]))}\leq \frac{1}{2}(\ch_1^{B'_0}(E))^2+\frac{1}{2}.
\]
We have 
\begin{equation*}
\begin{split}
(\ch_1^{B'_0}(E))^2&=\ch_1(E)^2-2B'_0\ch_1(E)+(B'_0)^2\\
&=\ch_1(E)^2-(\Theta+(-2D_\alpha-3)f)\ch_1(E)+(-D_\alpha-2).
\end{split}
\end{equation*}
Hence $(\ch_1^{B'_0}(E))^2$ is an integer. If $(\ch_1^{B'_0}(E))^2=0$, Hodge index theorem implies that $\ch_1^{B'_0}(E)$ is numerically trivial. This implies that 
\[
\ch_1(E)=B'_0=\frac{1}{2}(\Theta+(-2D_\alpha-3)f),
\]
contradicting $E\in \Coh(X)$. Hence $(\ch_1^{B'_0}(E))^2\leq -1$, which implies that $\Re(Z'_0(E[1]))\leq 0$.
\end{proof}

% By Proposition \ref{prop:weakstabf-afttrans}, we have $Z'_0(Coh^{\omega'_0, B'_0})\in \mathbb{H}_0$. 

We analyze the objects in the kernel of the central charge. 
 Let $K\in \ker(Z'_0)\cap \Coh^{\omega'_0, B'_0}$, then $K=E[1]$ for some sheaf $E\in \mathcal{F}_{\omega', B'}$. %Denote $\mu_{\omega', B'}(E)$ to be the Gieseker slope of $E$.

	%Recall that $Coh^{\omega'_0, B'_0}=<\mathcal{F}[1], \mathcal{T}>_{ex}$ for $(\mathcal{T}, \mathcal{F})$ a torsion pair on $Coh(X)$. 

	\begin{prop}
		\label{Prop:L0L1}
		Let $E$ be a $\mu_{\omega', B'}$-stable sheaf with 
		$E[1]\in \ker(Z'_0)\cap \Coh^{\omega'_0, B'_0}$. 
		Then either $E\simeq \cO(-(D_\alpha+1)f)$ or $E\simeq \cO(\Theta-(D_\alpha+2)f)$.
	\end{prop}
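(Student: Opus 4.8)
The plan is to convert the hypothesis $E[1]\in\ker(Z'_0)\cap\Coh^{\omega'_0,B'_0}$ into numerical data, use the sharpened Bogomolov--Gieseker inequality on the K3 surface to force $E$ to be a line bundle, and then determine $c_1(E)$ by an intersection computation. First, since $E[1]\in\Coh^{\omega'_0,B'_0}$ we have $E\in\mathcal{F}_{\omega'_0,B'_0}$, so $E$ is torsion-free with $\mu_{\omega'_0,B'_0,\mathrm{max}}(E)\le 0$; and $E[1]\in\ker(Z'_0)$ means $Z^{td}_{\omega'_0,B'_0}(E)=0$. Using $V_{\omega'_0}=\tfrac12$ we have $Z'_0(E)=-\ch_2^{B'_0}(E)-\tfrac12\ch_0(E)+i\,\omega'_0\ch_1^{B'_0}(E)$, so the vanishing gives $\omega'_0\ch_1^{B'_0}(E)=0$ (hence, as $\ch_0(E)>0$, $\mu_{\omega'_0,B'_0}(E)=0$) together with $\ch_2^{B'_0}(E)=-\tfrac12\ch_0(E)$.

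Next I would invoke Proposition~\ref{prop:K3surfstrongerBG} for the $\mu_{\omega'_0,B'_0}$-stable torsion-free sheaf $E$ (valid since $\omega'_0$ is ample), obtaining $\ch_2^{B'_0}(E)\le\frac{(\ch_1^{B'_0}(E))^2}{2\ch_0(E)}-\ch_0(E)+\frac{1}{\ch_0(E)}$. Substituting $\ch_2^{B'_0}(E)=-\tfrac12\ch_0(E)$ and clearing the positive denominator $2\ch_0(E)$ gives $\ch_0(E)^2\le(\ch_1^{B'_0}(E))^2+2$, and the Hodge index theorem (applicable because $\omega'_0$ is ample with $(\omega'_0)^2>0$ and $\omega'_0\ch_1^{B'_0}(E)=0$) gives $(\ch_1^{B'_0}(E))^2\le 0$; hence $\ch_0(E)^2\le 2$, so $\ch_0(E)=1$. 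Writing $0\to E\to E^{\ast\ast}\to Q\to 0$ with $E^{\ast\ast}$ a line bundle and $Q$ zero-dimensional of length $\ell\ge 0$, one has $\ch_2^{B'_0}(E)=\tfrac12(\ch_1^{B'_0}(E))^2-\ell$, so $\ch_2^{B'_0}(E)=-\tfrac12$ becomes $(\ch_1^{B'_0}(E))^2=2\ell-1$; since the left side is $\le 0$ and the right side is odd and $\ge-1$, necessarily $\ell=0$ and $(\ch_1^{B'_0}(E))^2=-1$, so $E=E^{\ast\ast}$ is a line bundle.

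Finally I would solve $\omega'_0(c_1(E)-B'_0)=0$ and $(c_1(E)-B'_0)^2=-1$. Setting $v:=2(c_1(E)-B'_0)=2c_1(E)-\Theta+(2D_\alpha+3)f\in\NS(X)$ and using $\omega'_0=\tfrac12(\Theta+3f)$ together with $\Theta^2=-2$, $\Theta\cdot f=1$, $f^2=0$, these say $v^2=-4$ and $(\Theta+3f)\cdot v=0$. As $(\Theta+3f)^2=4>0$, the orthogonal complement $(\Theta+3f)^\perp$ is negative definite, and for $\NS(X)=\ZZ\Theta\oplus\ZZ f$ one finds $v=\pm(\Theta-f)$; translating back, $c_1(E)=B'_0\pm\tfrac12(\Theta-f)$, that is $c_1(E)=-(D_\alpha+1)f$ or $c_1(E)=\Theta-(D_\alpha+2)f$. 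A short reverse computation confirms both of these line bundles do lie in $\ker(Z'_0)\cap\Coh^{\omega'_0,B'_0}$.

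The step I expect to be the main obstacle is the rank-one deduction: it genuinely requires the \emph{sharpened} Bogomolov--Gieseker inequality on the K3 (Proposition~\ref{prop:K3surfstrongerBG}) rather than the classical one, used in tandem with the fact that $\ch_1^{B'_0}(E)$ is forced to be $\omega'_0$-orthogonal. By contrast the identification of $c_1(E)$ is a routine intersection computation, whose only subtle point is excluding stray $(-4)$-classes orthogonal to $\Theta+3f$, which is immediate once $\NS(X)=\ZZ\Theta\oplus\ZZ f$. An alternative to the last two steps would go through the Fourier--Mukai identity~\eqref{eq:AG52-47-1}: $E[1]\in\ker(Z'_0)$ forces $\Phi^{-1}(E[1])\otimes\OO_X(-B)$ to lie in $\ker(Z_H)$, which --- once one knows that $\Phi$ matches the relevant heart with $\Coh^{\omega'_0,B'_0}$ --- puts it in $\Bc^0_{H,-2}\cap\ker(Z_H)$, a direct sum of $\OO_X[1]$ and $\OO_\Theta(-1)$ by Proposition~\ref{prop:B0HkcapkerZH} and Lemma~\ref{lem:AG51-174-1}; indecomposability of the stable sheaf $E$ then isolates the two possibilities. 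Since the heart-matching for $\Phi$ is proved later rather than here, the direct route seems the cleaner choice.
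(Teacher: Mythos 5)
Your reduction to numerical data, the use of the sharpened Bogomolov--Gieseker inequality plus the Hodge index theorem to get $\ch_0(E)=1$, and the double-dual argument forcing $E$ to be a line bundle with $(\ch_1^{B'_0}(E))^2=-1$ all match the paper's argument (the rank-one step is quoted there from the proof of Proposition \ref{prop:weakstabf-afttrans}). The gap is in your last step: you determine $c_1(E)$ by solving $v^2=-4$, $v\cdot(\Theta+3f)=0$ for $v=2(c_1(E)-B'_0)$ inside $\NS(X)=\ZZ\Theta\oplus\ZZ f$, and you yourself flag that excluding other $(-4)$-classes orthogonal to $\Theta+3f$ is ``immediate once $\NS(X)=\ZZ\Theta\oplus\ZZ f$''. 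But that hypothesis is nowhere assumed in the paper: a Weierstra{\ss} elliptic K3 surface with integral fibers can perfectly well have Picard rank greater than $2$ (e.g.\ positive Mordell--Weil rank contributes extra sections), and then $(\Theta+3f)^{\perp}$ may a priori contain further square $-4$ classes in the relevant coset of $2\NS(X)$, which your computation does not rule out. So as written the proof only establishes the proposition under an extra assumption on $\NS(X)$.

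The paper closes exactly this point without any rank restriction: writing $x=\Theta\ch_1(E)$, $y=f\ch_1(E)$, the two kernel equations give $x+3y=-D_\alpha-1$ and $\ch_1(E)^2=-(2D_\alpha+6)y$, and applying the Hodge index theorem to the \emph{real} classes $\ch_1(E)+a\Theta+bf$ with $a+b=D_\alpha+1$ (all orthogonal to the ample $\omega_0'$) yields $(2a+1)y\geq -a^2$ for every $a\in\RR$, hence $y\in\{0,1\}$; for each value of $y$ a suitable choice of $a$ produces equality, and a square-zero real class orthogonal to an ample class is numerically trivial, which pins down $\ch_1(E)$ exactly in any Picard rank. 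If you prefer to stay in your lattice formulation, the same effect is achieved by projecting $v$ orthogonally onto $\langle\Theta,f\rangle_{\RR}$: since $v\perp(\Theta+3f)$, the projection is $\lambda(\Theta-f)$ with $\lambda=v\cdot f=2\,c_1(E)\cdot f-1$ an odd integer, the complementary component $w$ lies in the negative definite orthogonal complement of $\langle\Theta,f\rangle$ in $\NS(X)_{\RR}$, and $-4=v^2=-4\lambda^2+w^2$ forces $\lambda=\pm1$ and $w=0$, i.e.\ $v=\pm(\Theta-f)$ without any assumption on $\NS(X)$.
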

 
	\begin{proof}
		% Let $E\in \ker(Z'_0)\cap Coh^{\omega'_0, B'_0}$. 
  From the proof of Proposition \ref{prop:weakstabf-afttrans}, we know that $\ch_0(E)=1$, and that equality holds for $E$ in the generalized BG inequality, giving us
  \[
\Re(Z'_0(E[1]))= \tfrac{1}{2}(\ch_1^{B'_0}(E))^2+\tfrac{1}{2},
\]
hence $\ch_2(E)=\frac{1}{2}\ch_1(E)^2$. 
Consider the short exact sequence 
\[
0\to E\to E^{**}\to Q\to 0.
\]
Since $E$ is $\mu_{\omega'_0, B'_0}$-stable of rank $1$, we have $Q$ is supported on dimension $0$. Then $\ch_2(E)=\frac{1}{2}\ch_1(E)^2$ implies that $E\simeq E^{**}$, and hence $E$ is a line bundle. 

Since $\Im(Z'_0(E))=0$ and $\Re(Z'_0(E))=0$, we have
	\begin{equation}
 \label{Equ:KerafterFM}
		\begin{split}
			&\ch_1(E)\cdot(\Theta+3f)-\frac{1}{2}(1+(-2D_\alpha-5)+2)=0\\
			&\frac{1}{2}\ch_1(E)^2-\ch_1(E)\cdot\frac{1}{2}(\Theta+(-2D_\alpha-3)f)+\frac{1}{4}(-2D_\alpha-4)+\frac{1}{2}=0.
		\end{split}
	\end{equation}
		Denote $\ch_1(E)\cdot\Theta$ by $x$, and $\ch_1(E)\cdot f$ by $y$.  
  %By the generalized BG inequality \eqref{Equ:GenBG},  we have $\ch_0(E)=1$.		
		Then  equation \eqref{Equ:KerafterFM} becomes
		\begin{equation}
			\label{Equ:KerNum}
			\begin{split}
				&x+3y=-D_\alpha-1\\
				&\ch_1(E)^2=-(2D_\alpha+6)y.
			\end{split}
		\end{equation}
		For any $a,b\in\mathbb{R}$ such that $a+b=D_\alpha+1$, we have $ (\ch_1(E)+(a\Theta+b f))\cdot \omega_0'=0$. Then the Hodge index theorem implies that
		\begin{equation}
			\label{Equ:HIT}
			(\ch_1(E)+(a\Theta+b f))^2\leq 0.
		\end{equation}
		%for all $a$, $b$ such that $a+b=D_\alpha+1$.
		Substituting $b=D_\alpha-a+1$, equation \eqref{Equ:HIT} implies that
		\begin{equation*}
			(2a+1)y\geq -a^2
		\end{equation*}
		for all $a\in\mathbb{R}$.
		Taking $a=0$, we have $y\geq 0$. Taking $a=-1$, we have $y\leq 1$.
		Since $y\in \mathbb{Z}$, we have $y=0$ or $y=1$.
		If $y=0$, then HIT taking equality forces $\ch_1(E)$ to be numerically equivalent to $-(D_\alpha+1)f$. Similarly, if $y=1$, then $\ch_1(E)$ is numerically equivalent to $\Theta-(D_\alpha+2)f$.
	\end{proof}
	
\paragraph \label{para:L0L1defs}	Define $$L_0:=\cO(-(D_\alpha+1)f),$$ and $$L_1:=\cO(\Theta-(D_\alpha+2)f).$$ 
%Let $Z_{\omega', B'}=(e^{\sqrt{-1}\omega'+B'},\_)\in S'_o$.
Let $E$, $y$ be the same as in Proposition \ref{Prop:L0L1}. 
We denote the phase of $E$ in $\sigma'=(Z^{td}_{\omega', B'}, \Coh^{\omega', B'})$ by $\phi_{\sigma'}(E)$.%Let $K$ be an object in $Ker(Z'_0)\cap Coh^{\omega'_0, B'_0}$.
	
	%We consider $\phi_{\sigma}(E[1])$ for $E$ is $\mu_{{\omega_1}, B_1}$-semistable, $E[1]\in Ker(Z'_0)\cap Coh^{\omega_1, B_1}$ and .
	
	Precomposing with the map $\Phi_Z$, we have 
	\begin{equation}
		\label{Equ:ImReNbd1}
		\begin{split}
			&\Im(Z_{\omega', B'}(E))=\frac{\sqrt{(D_{\omega}+1)(V_{\omega}+1)}}{D_{\omega}+V_{\omega}+2}(y(D_{\omega}+V_{\omega})-D_{\omega}).\\
			&\Re(Z_{\omega', B'}(E))=\frac{V_{\omega}-D_{\omega}}{D_{\omega}+V_{\omega}+2}y+\frac{D_{\omega}V_{\omega}+D_{\omega}}{D_{\omega}+V_{\omega}+2}.
		\end{split}
	\end{equation}
	For the rest of the section, we always consider $\sigma'$ as a function of $V_\omega$ and $D_\omega$.
 
	If $E\simeq L_0$, i.e. $y=0$, we have 
	\begin{equation}
		\label{Equ:ImReNbd2}
		\begin{split} &\Im(Z_{\omega', B'}(L_0))=-D_{\omega}\frac{\sqrt{(D_{\omega}+1)(V_{\omega}+1)}}{D_{\omega}+V_{\omega}+2}\\ 
			&\Re(Z_{\omega', B'}(L_0))=\frac{D_{\omega}V_{\omega}+D_{\omega}}{D_{\omega}+V_{\omega}+2}.
		\end{split}
	\end{equation}
	
	%Hence as $D_\omega\rightarrow 0^+$, $V_\omega\rightarrow 0^+$, we have 
	Since $D_\omega>0$, $V_\omega>0$, we have $\Im(Z_{\omega', B'}(L_0[1]))>0$, and since 
 \[
 -\frac{\Re(Z_{\omega', B'}(L_0[1]))}{\Im(Z_{\omega', B'}(L_0[1]))} = \frac{V_\omega + 1}{\sqrt{(D_\omega + 1)(V_\omega+1)}}
 \]
 it is easy to see that 
	$$\lim\limits_{D_\omega, V_\omega\to 0^+ }\phi_{\sigma'}(L_0[1])=\frac{3}{4}.$$%, and $\phi_{\sigma'}(E)$ approaches $-\frac{1}{4}$.
	
	If $E\simeq L_1$, i.e. $y=1$, we have 
	\begin{equation}
		\label{Equ:ImReNbd3}
		\begin{split}
			&\Im(Z_{\omega', B'}(L_1))=V_{\omega}\frac{\sqrt{(D_{\omega}+1)(V_{\omega}+1)}}{D_{\omega}+V_{\omega}+2}\\
			&\Re(Z_{\omega', B'}(L_1))=\frac{D_{\omega}V_{\omega}+V_{\omega}}{D_{\omega}+V_{\omega}+2}.
		\end{split}
	\end{equation}
	In this case we have $\Im(Z_{\omega', B'}(L_1))>0$, and 
 \[
 -\frac{\Re(Z_{\omega', B'}(L_1))}{\Im(Z_{\omega', B'}(L_1))} = -\frac{D_\omega + 1}{\sqrt{(D_\omega + 1)(V_\omega+1)}}
 \]
 and so 
 $$\lim\limits_{D_\omega, V_\omega\to 0^+}\phi_{\sigma'}(L_1)=\frac{1}{4}.$$%, hence $\phi_{\sigma}(E[1])$ approaches $\frac{5}{4}$.
	
	From the computation, we see that if $K\in \ker(Z'_0)\cap \Coh^{\omega', B'}$ is taken to be $L_1[1]$, then $$\lim\limits_{D_\omega, V_\omega\to 0^+}\phi_{\sigma'}(K)=\frac{5}{4}.$$
 %This does not satisfies condition (i) in Definition \ref{Def:Weak}. 
 If we define
	\begin{equation}
		\label{Equ:Limitphase}
		\phi_{\sigma'_0}(K)=\lim\limits_{D_\omega, V_\omega\to 0^+} \phi_{\sigma'}(K),
	\end{equation}  
	the triple $$(Z'_0, \Coh^{\omega'_0, B'_0}, \{\phi_{\sigma'_0}(K)\}|_{K\in Ker(Z'_0)\cap \Coh^{\omega'_0, B'_0}})$$ does not satisfy condition (i) in Definition \ref{Def:Weak}. To define a weak stability condition, we construct a heart $\mathcal{B}$ such that $L_1\in \mathcal{B}$ and $Z'_0(\B)\in \mathbb{H}_0$. To simplify notation, for the rest of this section, we denote $\Coh^{\omega'_0, B'_0}$ by $\A$. Recall that $\A$ is constructed by tilting $\Coh(X)$ at a torsion pair $(\T_{\omega'_0, B'_0}, \F_{\omega'_0, B'_0})$. For the rest of this section, we denote this torsion pair by $(\T, \F)$. We construct $\B$ by tilting $\A$ at a torsion pair. 

	It is noted in \cite{piyaratne2015moduli} that the full subcategory $\A_{\ker(Z'_0)}:=\ker(Z'_0)\cap \A$ is an abelian subcategory of $\A$.
 \begin{comment}
	\begin{lem}
		\label{Lem:Kertorpair}
		The pair $(\T_\A, \F_\K)$ defines a torsion pair on $\A_{\ker(Z'_0)}$.
	\end{lem}
	\begin{proof}
		Since $\Hom(L_1, L_0)=H^0(O(-\Theta+f))=0$, we have $$\Hom(\T_\A, \F_\K)=0.$$
		
		Also, Serre duality implies that $H^2(O(-\Theta+f))\simeq H^0(O(\Theta-f))^*=0$. By Riemann-Roch, we have $$\Ext^1(L_1, L_0)=H^1(O(-\Theta+f))=0$$
		and 
		$$\Ext^1(L_0, L_1)=H^1(O(\Theta-f))=0.$$
		Taking $E\in \A_{\ker(Z'_0)}$, assume $E\notin \F_\K$.
		Then there exists exact triangle
		\begin{equation*}
			T_1\rightarrow E\rightarrow F_1\rightarrow T_1[1]
		\end{equation*} 
		with $T_1\in \T_\A$, which is an short exact sequence in $Coh(X)[1]$.
		
		If $F_1\in\F_\K$ then we are done. If not repeat the process for $F_1$. Since the fiber degree of $ch_1(F_i[-1])$ is decreasing, we have fiber degree of $ch_1(F_n[-1])$ equals zero for some $n$. Then $F_n\in \F_\K$.
	\end{proof}
\end{comment}
 	\begin{lem}
		\label{Lem:Kertorpair}
		Any object in $\A_{\ker(Z'_0)}$ is a direct sum of copies of $L_0[1]$ and $L_1[1]$.
	\end{lem}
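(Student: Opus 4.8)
The plan is to use Proposition \ref{Prop:L0L1} together with the vanishing of the relevant $\Ext^1$ groups. Let $K \in \A_{\ker(Z'_0)}$ be nonzero. As recorded just before Proposition \ref{Prop:L0L1}, $K = E[1]$ for a sheaf $E \in \F$ with $E[1] \in \ker(Z'_0)$; in particular $E$ is torsion-free and $\ch_0(E)>0$. From $\Im(Z'_0(E[1])) = -\omega'_0 \cdot \ch_1^{B'_0}(E) = 0$ we then get $\mu_{\omega'_0, B'_0}(E) = 0$. Since $E \in \F$, every $\mu_{\omega'_0, B'_0}$-Harder--Narasimhan factor of $E$ has slope $\le \mu_{\omega'_0, B'_0, \mathrm{max}}(E) \le 0$; as the ranks are positive and the rank-weighted average of these slopes equals $\mu_{\omega'_0, B'_0}(E) = 0$, all HN slopes are $0$ and $E$ is $\mu_{\omega'_0, B'_0}$-semistable of slope $0$.

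Next I would pick a Jordan--Hölder filtration $0 = E_0 \subset E_1 \subset \cdots \subset E_m = E$ in the abelian category of $\mu_{\omega'_0, B'_0}$-semistable sheaves of slope $0$ (finite since $\sum_i \ch_0(E_i/E_{i-1}) = \ch_0(E)$). Each factor $G_i := E_i/E_{i-1}$ is $\mu_{\omega'_0, B'_0}$-stable of slope $0$, hence lies in $\F$, so $G_i[1] \in \A$. Because $\Re(Z'_0(-))$ is additive and $\Re(Z'_0(E[1])) = \sum_i \Re(Z'_0(G_i[1])) = 0$ while each summand is $\le 0$ by the proof of Proposition \ref{prop:weakstabf-afttrans}, every $\Re(Z'_0(G_i[1]))$ vanishes, so $G_i[1] \in \ker(Z'_0) \cap \A$. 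Proposition \ref{Prop:L0L1} then forces $G_i \cong L_0$ or $G_i \cong L_1$, so $E$ carries a filtration all of whose subquotients are isomorphic to $L_0$ or $L_1$.

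To conclude I would show $\Ext^1(L_a, L_b) = 0$ for all $a, b \in \{0, 1\}$; then induction on $m$ shows that $0 \to E_{m-1} \to E \to G_m \to 0$ splits (with $E_{m-1}$ inductively a direct sum of copies of $L_0$ and $L_1$ and $\Ext^1(G_m, L_a) = 0$), whence $E$, and therefore $K = E[1]$, is a direct sum of copies of $L_0[1]$ and $L_1[1]$. For the vanishing, $\Ext^1(L_a, L_b) = H^1(X, L_b \otimes L_a^{-1})$, and from $L_0 = \cO(-(D_\alpha+1)f)$, $L_1 = \cO(\Theta - (D_\alpha+2)f)$ one computes $L_a \otimes L_a^{-1} = \cO_X$, $L_1 \otimes L_0^{-1} = \cO_X(\Theta - f)$ and $L_0 \otimes L_1^{-1} = \cO_X(-\Theta + f)$. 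On the K3 surface $X$ one has $H^1(\cO_X) = 0$; and for $M = \pm(\Theta - f)$, using $\Theta^2 = -e = -2$, $\Theta\cdot f = 1$, $f^2 = 0$, we get $M^2 = -4$, so Riemann--Roch gives $\chi(\cO_X(M)) = 2 + M^2/2 = 0$, while $H^0$ and (by Serre duality) $H^2$ vanish because neither $\Theta - f$ nor $-\Theta + f$ is effective: $-\Theta+f$ has negative intersection with the nef fiber class $f$, and for $\Theta - f$ the curve $\Theta$ is forced into the fixed part since $(\Theta-f)\cdot\Theta < 0$, while $(\Theta - f) - \Theta = -f$ is not effective. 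Hence $H^1 = 0$ in all three cases, as required.

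The genuine work beyond citing Proposition \ref{Prop:L0L1} is the semistability reduction in the first paragraph — where one must combine $E \in \F$ with the kernel condition to pin down that $E$ is slope-semistable of slope $0$ and that its Jordan--Hölder factors again lie in the kernel — together with the $\Ext^1$ computation; I expect the semistability/JH-factor argument to be the main point to get right, since the $\Ext^1$ vanishings are essentially routine (and are the content of the commented-out version of this lemma).
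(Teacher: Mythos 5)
Your proof is correct and follows essentially the same route as the paper: reduce to a filtration of $E$ by copies of $L_0$ and $L_1$ via Proposition \ref{Prop:L0L1}, and then split that filtration using the vanishing of the relevant $\Hom$ and $\Ext^1$ groups between $L_0$ and $L_1$. You simply make explicit what the paper leaves implicit (that $E$ is $\mu_{\omega'_0,B'_0}$-semistable of slope $0$, that its Jordan--H\"older factors again lie in $\ker(Z'_0)$, and that $\Ext^1(L_a,L_a)\cong H^1(\OO_X)=0$); the only small caveat is that slope-semistable sheaves of slope $0$ do not literally form an abelian category, but the finite filtration with torsion-free $\mu$-stable slope-$0$ factors that you actually use does exist, so the argument stands.
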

	\begin{proof}

Let $K\in \A_{\ker(Z'_0)}$, we know that $K\simeq E[1]$ for some coherent sheaf $E\in \F_{\omega', B'}$. By Proposition \ref{Prop:L0L1}, we have $E$ has a filtration by copies of $L_0$ and $L_1$. 

We have $\Hom(L_1, L_0)=H^0(O(-\Theta+f))=0$. 
Also, Serre duality implies that $$\Hom (L_0, L_1) \cong H^2(O(-\Theta+f))\simeq H^0(O(\Theta-f))^*=0.$$ 
By Riemann-Roch, we have $$\Ext^1(L_1, L_0)=H^1(O(-\Theta+f))=0$$
		and 
		$$\Ext^1(L_0, L_1)=H^1(O(\Theta-f))=0.$$
	\end{proof}
	Define the subcategory $\T_\A\subset \A$ by 
	\begin{equation*}
		\T_\A=\langle L_1[1]\rangle.
	\end{equation*}

	Define its right orthogonal in $\A$ by 
	\begin{equation*}
		\F_\A=\{F\in \A|\Hom(L_1[1], F)=0\}.
	\end{equation*}
	
%	We also define the subcategory $\F_\K$ by 
%	\begin{equation*}
%		\F_\K=\langle L_0[1]\rangle
%	\end{equation*}
	\begin{prop}\label{prop:torsionpairTAFA}
		The pair $(\T_\A, \F_\A)$ defines a torsion pair on $\A$.
	\end{prop}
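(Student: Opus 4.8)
The plan is to verify the two defining properties of a torsion pair on $\A = \Coh^{\omega'_0, B'_0}$: that $\Hom_\A(T, F) = 0$ for all $T \in \T_\A$ and $F \in \F_\A$, and that every $E \in \A$ fits into a short exact sequence $0 \to T \to E \to F \to 0$ in $\A$ with $T \in \T_\A$ and $F \in \F_\A$ (these two conditions are well known to suffice). The first property is essentially the definition of $\F_\A$ as the right orthogonal of $L_1[1]$: since $\Ext^1_\A(L_1[1], L_1[1]) = \Ext^1_X(L_1, L_1) = H^1(X, \OO_X) = 0$, every object of $\T_\A = \langle L_1[1]\rangle$ is a finite direct sum of copies of $L_1[1]$, so $\Hom_\A(T, F) = \Hom_\A(L_1[1], F)^{\oplus n} = 0$ whenever $F \in \F_\A$. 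Thus the content lies in producing the short exact sequence.

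The key point is that $L_1[1]$ is a \emph{simple} object of $\A$. To see this, suppose $0 \to S \to L_1[1] \to Q \to 0$ is exact in $\A$ with $S \neq 0$; the goal is to force $Q = 0$. Since $Z'_0$ takes values in $\mathbb{H}_0$ on $\A$ (Proposition \ref{prop:weakstabf-afttrans}) and $Z'_0(L_1[1]) = 0$, we get $Z'_0(S) = Z'_0(Q) = 0$, so $S, Q \in \A_{\ker(Z'_0)}$. By Lemma \ref{Lem:Kertorpair}, $S \cong (L_0[1])^{\oplus a} \oplus (L_1[1])^{\oplus b}$; taking cohomology sheaves of the triangle $S \to L_1[1] \to Q \to S[1]$ yields an inclusion of sheaves $H^{-1}(S) \cong L_0^{\oplus a} \oplus L_1^{\oplus b} \hookrightarrow H^{-1}(L_1[1]) = L_1$, so $a + b \leq \rank L_1 = 1$, and hence $a + b = 1$ since $S \neq 0$. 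The case $a = 1$, $b = 0$ would give a nonzero sheaf morphism $L_0 \hookrightarrow L_1$, contradicting $\Hom(L_0, L_1) = 0$ (established in the proof of Lemma \ref{Lem:Kertorpair}); so $S \cong L_1[1]$, and since $\Hom_X(L_1, L_1) \cong \mathbb{C}$ the monomorphism $S \to L_1[1]$ must be an isomorphism, forcing $Q = 0$. Hence $L_1[1]$ is simple.

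Granting simplicity, I would produce the required sequence by the usual evaluation construction. Fix $E \in \A$ and set $V := \Hom_{D^b(X)}(L_1[1], E)$, which is finite dimensional; let $T \subseteq E$ be the image of the canonical morphism $\mathrm{ev} \colon L_1[1] \otimes_\mathbb{C} V \to E$. Because $L_1[1]$ is simple, $T$ — being a quotient of $(L_1[1])^{\oplus \dim V}$ — is a finite direct sum of copies of $L_1[1]$, so $T \in \T_\A$. Put $F := E/T$. Applying $\Hom_\A(L_1[1], -)$ to $0 \to T \to E \to F \to 0$ and using $\Ext^1_\A(L_1[1], T) = 0$, the restriction $\Hom_\A(L_1[1], E) \to \Hom_\A(L_1[1], F)$ is surjective; but by construction every morphism $L_1[1] \to E$ factors through $T = \ker(E \to F)$, so this map is also zero. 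Therefore $\Hom_\A(L_1[1], F) = 0$, i.e.\ $F \in \F_\A$, and $0 \to T \to E \to F \to 0$ is the sequence we wanted; combined with the first property this shows $(\T_\A, \F_\A)$ is a torsion pair.

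The main obstacle is the simplicity of $L_1[1]$: this is the step where the earlier structural results — Proposition \ref{Prop:L0L1} identifying the stable kernel objects and Lemma \ref{Lem:Kertorpair} describing $\A_{\ker(Z'_0)}$ — must be combined with the rank constraint coming from $L_1 \in \F_{\omega', B'}$ and with the vanishing $\Hom(L_0, L_1) = 0$. Once simplicity is in hand, the rest is the routine fact that a simple object with vanishing self-extensions in a category with finite-dimensional Hom-spaces determines a torsion pair via its right orthogonal, so no further ingredients are needed.
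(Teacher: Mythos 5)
Your proof is correct, but it follows a genuinely different route from the paper's. The paper argues as in Tramel--Xia: starting from any nonzero map $T\to E$ with $T\in\T_\A$, it replaces $T$ by $H^{-1}(T)/H^{-2}(F)[1]$, checks via a cohomology-sheaf and slope analysis that this is an honest $\A$-subobject of $E$ lying in $\T_\A$ with quotient $F_1$ again in $\A$, and then iterates, the process terminating because $\dim\Hom(L_1[1],F_i)$ strictly decreases (using $\Ext^1(L_1[1],K_i)=0$). You instead first prove that $L_1[1]$ is a simple object of $\A$ --- combining the fact that $Z'_0(\A)\subseteq\mathbb{H}_0$ with $Z'_0(L_1[1])=0$ to place any sub- and quotient object in $\A_{\ker(Z'_0)}$, then using Lemma \ref{Lem:Kertorpair}, the rank bound from $H^{-1}(S)\hookrightarrow L_1$, and $\Hom(L_0,L_1)=0$ --- and then produce the torsion subobject in a single step as the image of the evaluation map $L_1[1]\otimes\Hom(L_1[1],E)\to E$, with $E/T\in\F_\A$ following from the surjectivity of $\Hom(L_1[1],E)\to\Hom(L_1[1],E/T)$ forced by $\Ext^1_\A(L_1[1],T)=0$ together with the fact that every map $L_1[1]\to E$ factors through $T$. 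Your argument is shorter and avoids the sheaf-level bookkeeping, at the cost of needing the simplicity of $L_1[1]$, finite-dimensionality of Hom spaces, and the vanishing $\Ext^1(L_1,L_1)=H^1(X,\OO_X)=0$; the paper's iterative construction does not need $L_1[1]$ to be simple and is therefore closer in spirit to the general tilting lemma it cites, but it requires verifying at each step that the quotient $F_1$ stays in $\A$. One small remark: for $\Hom(\T_\A,\F_\A)=0$ you do not actually need objects of $\T_\A$ to be direct sums of $L_1[1]$; induction on the extension closure suffices, so the appeal to $\Ext^1(L_1,L_1)=0$ there is harmless but unnecessary.
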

	\begin{proof}
		The argument is similar to the proof of Lemma 3.2 in \cite{tramel2017bridgeland}.
		
		We need to show that for any $E\in \A$, there exists an exact triangle 
		\begin{equation*}
			0\rightarrow T\rightarrow E\rightarrow F\rightarrow 0
		\end{equation*}
		with $T\in \T_\A$ and $F\in \F_\A$.
		Assume that $E\notin \F_\A$, then there exists $T\in \T_\A$ with a nonzero map to $E$.  Completing the triangle, we have 
		\begin{equation*}
			T\rightarrow E\rightarrow F\rightarrow T[1].
		\end{equation*}
		Since $H^i(T)=0$ for all $i\neq -1$, by long exact sequence of cohomology, we have 
		\begin{equation*}
			0\rightarrow H^{-2}(F)\rightarrow H^{-1}(T)\rightarrow H^{-1}(E)\rightarrow H^{-1}(F)\rightarrow 0.
		\end{equation*}
		Consider the map $H^{-1}(T)/H^{-2}(F)[1]\rightarrow E$ via the composition $$H^{-1}(T)/H^{-2}(F)[1]\rightarrow H^{-1}(E)[1]\rightarrow E.$$ Completing the triangle, we obtain the following exact triangle
		\begin{equation}
			\label{Equ:TorpairStep1}
			H^{-1}(T)/H^{-2}(F)[1]\rightarrow E\rightarrow F_1\rightarrow H^{-1}(T)/H^{-2}(F)[2].
		\end{equation}

  Claim 1. $H^{-1}(T)/H^{-2}(F)[1]\in\T_\A$.
		We have the following short exact sequence of sheaves:
		\begin{equation}
			\label{Equ:longexseq}
			0\rightarrow H^{-2}(F)\rightarrow H^{-1}(T)\rightarrow H^{-1}(T)/H^{-2}(F)\rightarrow 0.
		\end{equation}
		Since $H^{-1}(T)\in \F$, we have $H^{-2}(F)\in \F$. Also since $H^{-1}(T)/H^{-2}(F)$ is a subsheaf of $H^{-1}(E)$, we have $H^{-1}(T)/H^{-2}(F)\in \F$. Then the short exact sequence \eqref{Equ:longexseq} is a short exact sequence in $\A[-1]$.
  Then $H^{-1}(T)[1]\in \A_{\ker(Z'_0)}$ implies that $H^{-2}(F)[1]\in \A_{\ker(Z'_0)}$ and $H^{-1}(T)/H^{-2}(F)[1]\in\A_{\ker(Z'_0)}$. Hence we have equation \eqref{Equ:longexseq} is a short exact sequence in $\A_{\ker(Z'_0)}[-1]$. From the proof of Lemma \ref{Lem:Kertorpair}, $H^{-1}(T)[1]\in \T_\A$ implies that $H^{-1}(T)/H^{-2}(F)[1]\in \T_\A$.
		
		Claim 2. $F_1\in \A$. Indeed by long exact sequence, we have $H^0(F_1)\simeq H^0(E)\in \T$, and 
		\begin{equation*}
			0\rightarrow H^{-1}(T)/H^{-2}(F)\rightarrow H^{-1}(E)\rightarrow H^{-1}(F_1)\rightarrow 0.
		\end{equation*} 
		Let $G$ be any subsheaf of $H^{-1}(F_1)$, and let $R$ be the cokernel. Let $K$ be the kernel of the composition $H^{-1}(E)\rightarrow H^{-1}(F_1)\rightarrow R$. Then we have a short exact sequence of coherent sheaves:
		\begin{equation*}
			0\rightarrow H^{-1}(T)/H^{-2}(F)\rightarrow K\rightarrow G\rightarrow 0.
		\end{equation*}
		$H^{-1}(T)/H^{-2}(F)[1]\in \A_{\ker(Z'_0)}$ implies that $\mu_{\omega', B'}(K)=\mu_{\omega', B'}(G)$. Since $K$ is a subsheaf of $H^{-1}(E)$, $\mu_{\omega', B'}(K)\leq 0$. Hence we have $\mu_{\omega_1, B_1}(G)\leq 0$, which implies that $H^{-1}(F_1)\in \F$.
		
		Hence the exact triangle \eqref{Equ:TorpairStep1} is a short exact sequence in $\A$, and $H^{-1}(T)/H^{-2}(F)[1]\in \T_\A$. If $F_1\in \F_\A$, then we are done. If not, we continue the process for $F_1$. Thus we obtain a sequence of quotients

\begin{equation}
\label{Equ:seqquotient}
F_1\twoheadrightarrow F_2\twoheadrightarrow F_3\twoheadrightarrow...
\end{equation}
		in $\A$, where the kernels $K_i$'s lie in $\T_\A$. Applying $\Hom(L_1[1],-)$ to the short exact sequences
		\begin{equation*}
			0\rightarrow K_i\rightarrow F_i\rightarrow F_{i+1}\rightarrow 0,
		\end{equation*}
		 we have 
		\begin{equation*}
			0\rightarrow \Hom(L_1[1], K_i)\rightarrow \Hom(L_1[1], F_i)\rightarrow \Hom(L_1[1], F_{i+1})\rightarrow \Ext^1(L_1[1], K_i)=0.
		\end{equation*}
		Hence the dimension of $\Hom (L_1[1], F_i)$ decreases as $i$ increases. So we have the sequence \eqref{Equ:seqquotient} must stabilize after a finite number of steps.
	\end{proof}

 \begin{defn}\label{def:BtiltofA}
 We define an abelian category $\B$ by tilting $\A[-1]$ at the torion pair $(\T_\A[-1], \F_\A[-1])$. Equivalently, $\B=\langle\F_\A, \T_\A[-1]\rangle$.
 \end{defn}
%	Define the abelian category $\B$ by tilting $\A$ at the torion pair $(\T_\A, \T_\B)$. Equivalently, $\B=<\F_\A, \T_\A[-1]>_{ex}$.
	
\subparagraph	 \label{para:Bkernelobjects} Let us set $\B_{\kernel (Z'_0)} = \{ E \in \B : Z_0'(E)=0\}$.  For any $E \in \B_{\kernel (Z'_0)}$, we can fit it in a short exact sequence $0 \to E' \to E \to E'' \to 0$ in $\B$ where $E' \in \F_\A$ and $E'' \in \T_\A [-1]$.  Since $Z'_0(E'')=0$, it follows that $Z'_0(E')=0$ and hence $E' \in \A_{\kernel (Z'_0)}$.  Moreover, since $\Hom (\T_\A, E')=0$, it follows from Lemma \ref{Lem:Kertorpair} that $E'$ is a direct sum of copies of $L_0[1]$ while $E''$ is a direct sum of copies of $L_1$.  Now, note that $\Ext^1 (L_1, L_0[1]) \cong \Hom (L_0, L_1) \cong H^0(X, \OO_X (\Theta -f))$, which vanishes because $\OO_X (\Theta -f)$ is a line bundle of negative degree with respect to an ample divisor of the form $\Theta + (2+\epsilon)f$ for a small $\epsilon >0$.  Overall, we see that $E$ itself  is isomorphic to a direct sum of copies of $L_0[1]$ and $L_1$.

    \begin{comment}
    Consider the path $C$ consists of $Z_{\omega, B}$ such that $[D_{\omega}:V_{\omega}]=a$ and $D_B=D_\alpha$, $R_\alpha=-1$. Then $\Phi_Z(C)$ defines a path in $S'_o$. The limit $\lim\limits_{D_{\omega}\rightarrow 0}\phi_{\Phi_Z(Z_{\omega, -\alpha})}(E)$ exists, and we have 
	\begin{equation*}
		\lim_{D_{\omega}\rightarrow 0}\phi_{\Phi_Z(Z_{\omega, -\alpha})}(L_1)\leq\lim_{D_{\omega}\rightarrow 0}\phi_{\Phi_Z(Z_{\omega, -\alpha})}(E)\leq\lim_{D_{\omega}\rightarrow 0}\phi_{\Phi_Z(Z_{\omega, -\alpha})}(L_0[1]).
	\end{equation*} 
	We denote this limit by $\phi_a(E)$. 
	\end{comment}

   We define $\B_{\ker(Z'_0)}:=\B\cap \ker(Z'_0)$. 
   Consider the $\mathbb{P}^1$ whose coordinate is given by $[D_{\omega}: V_{\omega}]$. Let $a\in \mathbb{P}^1$.  For any $K\in \B_{\ker(Z'_0)}$, we define 
 \[
 \phi_a^R(K)=\lim\limits_{V_\omega, D_\omega\to 0, [V_\omega:D_\omega]=a}\phi_{\sigma'}(K),
 \]
 where $\sigma'=(Z^{td}_{\omega', B'}, \Coh^{\omega', B'})$, and $Z^{td}_{\omega', B'}$ is considered as a function of $V_\omega$, $D_\omega$ via precomposing with $\Phi_Z$.
 
 Consider the triple $$\sigma^R_a=(Z'_0, \B, \{\phi^R_a(K)\}_{K\in \B_{\ker(Z'_0)}}).$$ 
 %It is immediate that $\sigma_a$ satisfies condition (i) in Definition \ref{Def:Weak}. 
	\begin{lem}
		\label{Lem:Weakseesaw}
		The triple $\sigma^R_a$ satisfies condition (i) and (ii) in Definition \ref{Def:Weak}.
	\end{lem}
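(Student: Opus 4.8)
The plan is to verify conditions (i) and (ii) of Definition \ref{Def:Weak} separately, in each case reducing to the description of $\B_{\ker(Z'_0)}$ from \ref{para:Bkernelobjects}, namely that every object there is a direct sum of copies of $L_0[1]$ and $L_1$. I will also use that the classes $[L_0[1]]$ and $[L_1]$ are linearly independent in $K(D^b(X))$: the two linear functionals $\ch_0$ and $f\cdot\ch_1(-)$ take values $(-1,0)$ on $L_0[1]$ and $(1,1)$ on $L_1$.

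For the positivity half of (i): by Definition \ref{def:BtiltofA}, $\B=\langle\F_\A,\T_\A[-1]\rangle$ with $\T_\A=\langle L_1[1]\rangle$, and since $\Ext^1(L_1,L_1)\cong H^1(X,\OO_X)=0$ every object of $\T_\A[-1]$ is a direct sum of copies of $L_1$, on which $Z'_0$ vanishes (because $L_1[1]\in\A_{\ker(Z'_0)}$). Given $E\in\B$, the torsion pair $(\F_\A,\T_\A[-1])$ on $\B$ gives a short exact sequence $0\to F\to E\to T\to 0$ with $F\in\F_\A\subseteq\A=\Coh^{\omega'_0,B'_0}$ and $T\in\T_\A[-1]$; hence $Z'_0(E)=Z'_0(F)\in\mathbb{H}_0$ by Proposition \ref{prop:weakstabf-afttrans}.

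For the phase half of (i): fix $a=[\mu:\lambda]\in\mathbb{P}^1$ and a nonzero $K\cong L_0[1]^{\oplus p}\oplus L_1^{\oplus q}$ in $\B_{\ker(Z'_0)}$. Additivity of $Z^{td}_{\omega',B'}$ together with the formulas \eqref{Equ:ImReNbd2} and \eqref{Equ:ImReNbd3} gives, for small $D_\omega,V_\omega>0$,
\[
  \Im Z^{td}_{\omega',B'}(K)=(pD_\omega+qV_\omega)\frac{\sqrt{(D_\omega+1)(V_\omega+1)}}{D_\omega+V_\omega+2}>0,
\]
so $\phi_{\sigma'}(K)\in(0,1)$, while the associated slope
\[
  -\frac{\Re Z^{td}_{\omega',B'}(K)}{\Im Z^{td}_{\omega',B'}(K)}=\frac{pD_\omega(V_\omega+1)-qV_\omega(D_\omega+1)}{(pD_\omega+qV_\omega)\sqrt{(D_\omega+1)(V_\omega+1)}}
\]
tends, as $D_\omega,V_\omega\to0$ along $[D_\omega:V_\omega]=a$, to $\rho^R_a(K):=-\cot(\pi\phi^R_a(K))=\tfrac{p\mu-q\lambda}{p\mu+q\lambda}\in[-1,1]$. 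Since $\phi\mapsto-\cot(\pi\phi)$ is increasing on $(0,1)$ and sends $[1/4,3/4]$ onto $[-1,1]$, this forces $1/4\le\phi^R_a(K)\le3/4$; in particular $0<\phi^R_a(K)\le1$. (The same computation also confirms that the limit defining $\phi^R_a$ exists.)

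For (ii): let $0\to K_1\to K\to K_2\to 0$ be a short exact sequence in $\B_{\ker(Z'_0)}$, which we may assume has $K_1,K_2\neq0$. Comparing classes in $K(D^b(X))$ and using the independence of $[L_0[1]]$ and $[L_1]$ shows $p=p_1+p_2$ and $q=q_1+q_2$, so $\rho^R_a(K)=\tfrac{p\mu-q\lambda}{p\mu+q\lambda}$ is the mediant of $\rho^R_a(K_1)$ and $\rho^R_a(K_2)$ with strictly positive denominators, hence lies between them; applying the increasing function $\phi\mapsto-\cot(\pi\phi)$ then yields $\phi^R_a(K_1)\ge\phi^R_a(K)\ge\phi^R_a(K_2)$ or the reverse chain, which is the weak see-saw property — the same mediant argument as in Proposition \ref{Prop:weakseesawbefore}. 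The one point I expect to need care is the two boundary rays $a\in\{[1:0],[0:1]\}$, along which a pure power of $L_0[1]$ (resp.\ of $L_1$) stays in $\ker(Z^{td}_{\omega',B'})$ and $\phi_{\sigma'}$ of it must be read as a one-sided limit; with that convention fixed (as in Proposition \ref{Prop:weakseesawbefore}) the mediant argument goes through verbatim. Everything else is mechanical given Proposition \ref{prop:weakstabf-afttrans}, \ref{para:Bkernelobjects}, and \eqref{Equ:ImReNbd2}--\eqref{Equ:ImReNbd3}.
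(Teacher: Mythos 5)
Your proof is correct and follows essentially the same route as the paper: kernel objects are direct sums of $L_0[1]$ and $L_1$ with limiting phases $\tfrac34$ and $\tfrac14$, so $\phi^R_a(K)\in[\tfrac14,\tfrac34]$ gives condition (i), and condition (ii) is the same mediant/see-saw computation as in Proposition \ref{Prop:weakseesawbefore}. You additionally spell out the positivity half of (i) (that $Z'_0(\B)\subseteq\mathbb{H}_0$ via the torsion pair $(\F_\A,\T_\A[-1])$ and Proposition \ref{prop:weakstabf-afttrans}) and the existence of the limits, which the paper leaves implicit; the only looseness, the boundary rays $a\in\{[1:0],[0:1]\}$, you flag explicitly and it is handled no less carefully than in the paper itself.
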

	\begin{proof}

 Since for any $K \in \B_{\ker (Z_0')}$ we have
 \begin{equation*}
		\lim\limits_{V_\omega, D_\omega\to 0, [V_\omega:D_\omega]=a}\phi_{\sigma'}(L_1)=\frac{1}{4}\leq\lim\limits_{V_\omega, D_\omega\to 0, [V_\omega:D_\omega]=a}\phi_{\sigma'}(K)\leq\lim\limits_{V_\omega, D_\omega\to 0, [V_\omega:D_\omega]=a}\phi_{\sigma'}(L_0[1])=\frac{3}{4},
	\end{equation*}
 we see that condition (i) is satisfied. 
 The proof of condition (ii) is the same as Proposition \ref{Prop:weakseesawbefore}.
 \begin{comment}
		Assuming there is a short exact sequence 
		$$0\rightarrow K_0\rightarrow K\rightarrow K_1\rightarrow 0$$
		in the $\B_{\ker(Z'_0)}$.
	%	From the prove of Lemma \ref{Lem:Kertorpair}, we have $Ext^i(L_0, L_1)=0$ for all $i$. Hence objects in $Ker(Z'_0)\cap\B$ are direct sums of $L_0[1]$ and $L_1$'s. 
 We denote the numbers of factors $L_0[1]$ and $L_1$ in $K_0$ by $n_0$ and $n_1$, the number of corresponding factors in $K$ by $l_0$ and $l_1$, and the number of corresponding factors in $K_1$ by $m_0$ and $m_1$. Let $\rho=-\frac{\Re(Z)}{\Im(Z)}$ be the Bridgeland slope.
		
		By definition of $\phi_a$, we have 
		$$\rho(K_0)=-\frac{V_{\omega_0}n_1-D_{\omega_0}n_0}{V_{\omega_0}n_1+D_{\omega_0}n_0}$$
		$$\rho(K)=-\frac{V_{\omega_0}l_1-D_{\omega_0}l_0}{V_{\omega_0}l_1+D_{\omega_0}l_0}$$and 
		$$\rho(K_1)=-\frac{V_{\omega_0}m_1-D_{\omega_0}m_0}{V_{\omega_0}m_1+D_{\omega_0}m_0}$$
		
		Since $l_i=n_i+m_i$. We have $\rho$ satisfies condition (ii) in Definition \ref{Def:Weak}.
  \end{comment}
	\end{proof}
	
	To show $\sigma_a^R$ defines a weak stability condition, we are left with checking the HN property.

	\begin{prop}
		\label{Prop:NoeCat}
		The abelian category $\B$ is a noetherian abelian category. 
	\end{prop}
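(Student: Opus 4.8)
The plan is to realize $\B$ as a tilt of the Noetherian category $\A := \Coh^{\omega'_0, B'_0}$ and then invoke the criterion of \cite[Lemma 5.5.2]{BVdB03}. Recall from Definition \ref{def:BtiltofA} and Proposition \ref{prop:torsionpairTAFA} that $\B = \langle \F_\A, \T_\A[-1]\rangle$, which is the shift by $[-1]$ of the standard Happel--Reiten--Smal{\o} tilt $\langle \F_\A[1], \T_\A\rangle$ of $\A$ at the torsion pair $(\T_\A, \F_\A)$ (with torsion class $\T_\A = \langle L_1[1]\rangle$). Since $\omega'_0$ is ample, $\A$ is Noetherian (cf.\ \cite[Lemma 6.17]{MSlec}), and Noetherianity is invariant under shifting the heart; hence by \cite[Lemma 5.5.2]{BVdB03} it suffices to prove that there is no infinite strictly increasing chain
\[
  F_0 \subsetneq F_1 \subsetneq F_2 \subsetneq \cdots
\]
in $\A$ with each $F_i \in \F_\A$ and each quotient $F_{i+1}/F_i \in \T_\A$.

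I would derive a contradiction from such a chain using the rigidity of $L_1$. Since $X$ is a K3 surface and $L_1$ is a line bundle, $\Ext^1_{D^b(X)}(L_1, L_1) \cong H^1(X, \OO_X) = 0$ and $\mathrm{End}(L_1) \cong H^0(X, \OO_X) = \mathbb{C}$; hence $\Ext^1_\A(L_1[1], L_1[1]) = 0$ and $\mathrm{End}_\A(L_1[1]) = \mathbb{C}$, so $\T_\A = \langle L_1[1]\rangle$ is equivalent to the category of finite-dimensional $\mathbb{C}$-vector spaces. In particular every object of $\T_\A$ is a direct sum of copies of $L_1[1]$, and since $\T_\A$ is closed under extensions the iterated quotient $F_i/F_0$ lies in $\T_\A$, so $F_i / F_0 \cong (L_1[1])^{\oplus n_i}$ with $n_0 < n_1 < n_2 < \cdots$. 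Applying $\Hom_\A(L_1[1], -)$ to the short exact sequence $0 \to F_0 \to F_i \to (L_1[1])^{\oplus n_i} \to 0$ and using $\Hom_\A(L_1[1], F_0) = \Hom_\A(L_1[1], F_i) = 0$ (which holds by the very definition of $\F_\A$), one obtains an injection
\[
  \mathbb{C}^{\,n_i} \;\cong\; \Hom_\A(L_1[1], (L_1[1])^{\oplus n_i}) \;\hookrightarrow\; \Ext^1_\A(L_1[1], F_0) = \Hom_{D^b(X)}(L_1[1], F_0[1]).
\]
The right-hand side is a fixed finite-dimensional vector space, so $n_i$ is bounded, contradicting $n_i \to \infty$. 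Therefore no such chain exists and $\B$ is Noetherian.

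The only slightly delicate point is the first-paragraph bookkeeping: one must match $\B$ (up to shift) with the tilt of $\A$ at $(\T_\A, \F_\A)$ so that, in \cite[Lemma 5.5.2]{BVdB03}, the role of the torsion-free class is played by $\F_\A$ and that of the torsion class by $\T_\A = \langle L_1[1]\rangle$. After that, the argument is the short rigidity computation above, and it is here — and only here — that the hypothesis that $X$ is a K3 surface is used, through $H^1(X,\OO_X)=0$.
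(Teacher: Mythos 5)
Your argument is correct and takes essentially the same route as the paper: reduce to the chain criterion of \cite[Lemma 5.5.2]{BVdB03} applied to the tilt of the noetherian heart $\A$ at $(\T_\A,\F_\A)$, then exploit that $\T_\A$ consists of direct sums of the rigid object $L_1[1]$ together with $\Hom(L_1[1],-)=0$ on $\F_\A$. The only difference is bookkeeping: the paper shows $\dim\Hom(L_1,F_i)$ strictly drops each time a nonzero quotient in $\T_\A$ is added, whereas you bound the total number of $L_1[1]$-factors at once by the fixed finite-dimensional space $\Ext^1_\A(L_1[1],F_0)\cong\Hom_{D^b(X)}(L_1,F_0)$ --- the same vector spaces organized slightly differently.
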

	\begin{proof}
	Since $\omega'$ is ample, it is well known that $\A$ is noetherian, e.g. see \cite[Lemma 6.17]{MSlec}. We use the criterion of \cite[Lemma 5.5.2]{BVdB03} to show $\B$ is noetherian. 
		In particular, we show that for any accending chain 
		\begin{equation}
			\label{Equ:Chain}
			F_0\subset F_1\subset\cdots
		\end{equation}
		with $F_i\in \F_\A$ and $\mathrm{coker}(F_0\rightarrow F_i)\in \T_\A$ stabilizes after a finite number of steps.
		
		Consider the short exact sequence in $\A$: 
		$$0\rightarrow F_0\rightarrow F_i\rightarrow G_i\rightarrow 0.$$
		By the snake lemma, we have 
		\begin{center}
		\begin{tikzcd}
			& & 0\ar{d}& 0\ar{d}\\
			0	\ar{r} & F_0 \ar[equal]{d}\ar{r} &F_i \ar{d}\ar{r} & G_i\ar{d}\ar{r} &0\\
			0	\ar{r} & F_0 \ar{r} &F_{i+1} \ar{d}\ar{r} & G_{i+1}\ar{d}\ar{r} &0\\
			& & C_i \ar{d}\ar{r}{\simeq} & K_i\ar{d}\\
			& & 0 & 0
		\end{tikzcd}
		\end{center}
		where all the short exact sequences are in $\A$. Since $G_i$ and $G_{i+1}$ are in $\T_\A$, we have $C_i\simeq K_i\in \T_\A$. 
  %By the argument in Proposition \ref{Prop:Stat}, we can assume that $C_i\simeq K_i\simeq L_1[1]$.
		
		Let us apply $\Hom(L_1, \_)$ to the short exact sequence 
		$$0\rightarrow F_i\rightarrow F_{i+1}\rightarrow C_i\rightarrow 0.$$
	%	We have $\text{dim}(\Hom(L_1, F_i))=\text{dim}(\Hom(L_1, F_{i+1}))+1$. Since 
 Since $L_1, F_{i+1} \in \B$, we have $0= \Ext^{-1}(L_1, F_{i+1})=\Hom(L_1[1], F_{i+1})$. Since $\Hom(L_1, L_1[1])=H^1(X, \cO_X)=0$, we also have $\Hom(L_1, C_i)=0$. Hence the dimension of $\Hom(L_1, F_i)$ decreases as $i$ increases.  At the point where the dimension of $\Hom (L_1, F_i)$ becomes stationary, we obtain $\Ext^{-1}(L_1,C_i)=0$; since $C_i$ is an extension of copies of $L_1[1]$ which has no self-extensions, it follows that $C_i=0$.  Thus the chain \eqref{Equ:Chain} stabilizes after a finite number of steps.
	\end{proof}

	\begin{thm}
		\label{Thm:HNfilweak}
		Given $a\in \mathbb{P}^1$, the weak stability function $\sigma^R_a$ satisfies the HN property. 
	\end{thm}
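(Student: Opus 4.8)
The plan is to verify the hypotheses of Proposition \ref{Prop:HNfil} for the triple $\sigma^R_a = (Z'_0, \B, \{\phi^R_a(K)\}_{K \in \B_{\ker(Z'_0)}})$. Conditions (i) and (ii) of Definition \ref{Def:Weak} already hold by Lemma \ref{Lem:Weakseesaw}, so it remains to check the two chain conditions in Proposition \ref{Prop:HNfil}. The decisive simplification compared with Proposition \ref{Prop:HNorigin} is that $\B$ is noetherian by Proposition \ref{Prop:NoeCat}: a sequence of quotients $E_1 \twoheadrightarrow E_2 \twoheadrightarrow \cdots$ in $\B$ produces an ascending chain of subobjects $\ker(E_1 \to E_i) \subset E_1$, which stabilizes, so chain condition (ii) of Proposition \ref{Prop:HNfil} holds automatically. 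Everything thus reduces to chain condition (i).

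For chain condition (i) I would take a chain $\cdots \subset E_{i+1} \subset E_i \subset \cdots \subset E_1$ in $\B$ with $\phi^R_a(E_{i+1}) > \phi^R_a(E_i/E_{i+1})$ for all $i$, set $F_i = E_i/E_{i+1}$, and show $F_i = 0$ for $i \gg 0$. Since $\Im Z'_0 \geq 0$ on $\B$, the sequence $\Im Z'_0(E_i)$ is non-increasing; moreover $\Im Z'_0 = \omega'_0 \ch_1^{B'_0}(-)$ takes values in $\tfrac12 \ZZ$, because $\omega'_0 = \tfrac12(\Theta + 3f)$ and $B'_0 = \tfrac12(\Theta + (-2D_\alpha - 3)f)$ with $D_\alpha \in \ZZ$ (as $\alpha = c_1(L)$ is integral and $e = 2$), so $\Im Z'_0$ is discrete. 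Hence $\Im Z'_0(E_i)$ is eventually constant and $\Im Z'_0(F_i) = 0$ for $i \gg 0$. For such $i$, if $F_i \notin \ker Z'_0$ then $Z'_0(F_i) \in \RR_{<0}$, so $\phi^R_a(F_i) = 1$, which is incompatible with $\phi^R_a(E_{i+1}) > \phi^R_a(F_i)$ since all phases lie in $(0,1]$. Therefore $F_i \in \B_{\ker(Z'_0)}$ for $i \gg 0$, and by the analysis in \ref{para:Bkernelobjects} it is a direct sum of copies of $L_0[1]$ and $L_1$.

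To conclude, I would apply $\Hom(-, L_0[1])$ and $\Hom(-, L_1)$ to the short exact sequence $0 \to E_{i+1} \to E_i \to F_i \to 0$ for $i \gg 0$. The groups $\Ext^1(F_i, L_0[1])$ and $\Ext^1(F_i, L_1)$ vanish: this is contained in the $\Ext$-computations already carried out in Lemmas \ref{Lem:Kertorpair} and \ref{lem:AG51-174-1}, together with $\Ext^1(L_0, L_0) = \Ext^1(L_1, L_1) = H^1(X, \OO_X) = 0$ on the K3 surface $X$ and Serre duality. Consequently $\dim \Hom(E_i, L_0[1])$ and $\dim \Hom(E_i, L_1)$ are non-increasing sequences of non-negative integers, hence eventually constant; once both have stabilized, the associated long exact sequences force $\Hom(F_i, L_0[1]) = \Hom(F_i, L_1) = 0$. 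Since a nonzero direct sum of copies of $L_0[1]$ and $L_1$ always admits a nonzero morphism to $L_0[1]$ or to $L_1$, this forces $F_i = 0$, i.e.\ $E_i \cong E_{i+1}$, for $i \gg 0$. Thus chain condition (i) holds, and Proposition \ref{Prop:HNfil} then yields the HN property for $\sigma^R_a$.

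The step I expect to be the main obstacle is in fact rather mild: because $\B$ is already known to be noetherian, the intricate cohomology-sheaf bookkeeping that chain condition (ii) forced upon us in the proof of Proposition \ref{Prop:HNorigin} is avoided entirely. The one genuinely new point that needs care is the discreteness of $\Im Z'_0$, which hinges on $D_\alpha$ being an integer so that $\omega'_0$ and $B'_0$ have half-integral intersection pairings; the remainder is a direct transcription of the argument for $\sigma^L_b$.
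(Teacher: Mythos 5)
Your proposal is correct and follows essentially the same route as the paper: reduce to Proposition \ref{Prop:HNfil}, use the noetherianity of $\B$ (Proposition \ref{Prop:NoeCat}) to dispose of the quotient chain condition, then for the subobject chain use discreteness of $\Im Z'_0$, identify the eventual quotients $F_i$ as direct sums of $L_0[1]$ and $L_1$ via \ref{para:Bkernelobjects}, and kill them with the $\Ext^1$-vanishings and the non-increasing dimensions of $\Hom(E_i, L_0[1])$ and $\Hom(E_i, L_1)$. Your discreteness justification (half-integrality via $D_\alpha \in \ZZ$) is if anything more explicit than the paper's, and the only blemish is the citation of Lemma \ref{lem:AG51-174-1}, which concerns $\OO_\Theta(-1)$ and $\OO_X[1]$ rather than $L_0, L_1$ --- but the facts you actually need are supplied by Lemma \ref{Lem:Kertorpair} together with the vanishings you state.
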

	\begin{proof}
		The argument follows the proof of  \cite[Lemma 2.18]{piyaratne2015moduli}.
		
		%By Lemma \ref{Lem:Weakseesaw}, we have $\sigma^R_a$ satisfies condition (ii) in Definition \ref{Def:Weak}.
		Since $\B$ is noetherian, we only need to check (i) in the Proposition \ref{Prop:HNfil}. Assuming we have such a chain of subobjects in $\B$, consider the short exact sequence in $\B$:
		$$0\rightarrow E_{i+1}\rightarrow E_i\rightarrow F_i\rightarrow 0.$$
		Since $\Im(Z'_0)\geq 0$ on $\B$, we have $\Im(Z'_0(E_{i+1}))\leq \Im(Z'_0(E_i))$. 
		Since $Z'_0$ has coefficients in $\mathbb{Q}$, we know that the image of $Z'_0$ is discrete. Hence $\Im(Z'_0(E_i))$ is constant for $i\gg 0$. Then $\Im(Z'_0(F_i))=0$ for $i>n$ for some $n$. 
		
		Let $i>n$, if $F_i\notin \B_{\ker(Z'_0)}$, then $\phi(F_i)=1$, contradicting with $\phi(E_{i+1})>\phi(F_i)$.
		%then $E_i\notin \K_\B$ and we have $\Re(Z'_0(F_i))<0$, hence $\Re(Z'_0(E_{i+1}))>\Re(Z'_0(E_{i}))$, 
		
		We must have $F_i\in \B_{\ker(Z'_0)}$.  From \ref{para:Bkernelobjects}, we know $F_i$ must be a direct sum of copies of $L_0[1]$ and $L_1$.  From the Ext-group calculations in the proof of Lemma \ref{Lem:Kertorpair}, we know that $\Ext^1 (F_i, L_1)=0=\Ext^1(F_i,L_0[1])$.     Hence  either the dimension of $\text{Hom}(E_{i}, L_1)$ or $\text{Hom}(E_{i}, L_0[1])$ decreases as $i$ increases, we much have $F_i=0$ for $i$ large enough. 
	\end{proof}
	\bigskip
	
	Intuitively, the local picture near the weak stability conditions $\sigma^L_b$'s and $\sigma^R_a$'s are $\Bl_{(0, 0)}\mathbb{R}^2$.

\newpage

\appendix

\section{Solving a central charge equation} \label{sec:app-solvingcce}

In this section, for given $\mathbb{R}$-divisors $\omega, B$ on a Weirstra{\ss} elliptic K3 surface $X$, we solve the central charge equation
 \begin{equation}\label{eq:cce-var1-copy-2}
Z_{\omega', B'}^{td}(\Phi (-)) = T Z_{\omega, B}(-)
 \end{equation}
for some $T \in \mathrm{GL}^+\!(2,\mathbb{R})$ and $\omega', B'$. Since $\sqrt{\mathrm{td}(X)}=(1,0,1)$ on a K3 surface, 
\begin{align*}
Z^{td}_{\omega, B}(F) &= -\int_X e^{-(B+i\omega)}\ch(F) \sqrt{\mathrm{td}(X)} \\
&= -\ch_2^B (F) + (\tfrac{\omega^2}{2}-1)\ch_0^B(F) + i\omega \ch_1^B(F) \\
&= -\ch_2^B (F) + (V_\omega -1)\ch_0^B(F) + i\omega \ch_1^B(F)
\end{align*}
which differs from $Z_{\omega, B}(F)$ by a change a variable in the coefficient for $\ch_0^B$.  Therefore, to solve the equation \eqref{eq:cce-var1-copy-2}, we first solve the equation 
 \begin{equation}\label{eq:cce-original-2}
Z_{\omega', B'}(\Phi (-)) = T Z_{\omega, B}(-).
 \end{equation}
Even though \eqref{eq:cce-original-2} was solved in \cite[Section 10]{Lo20} (see also \cite[Appendix A]{LM2}), the solution there was written in terms of a coordinate system different from the RDV coordinates introduced in \ref{para:RDVcoord}.  Instead of applying a change of variables to the solution in \cite{Lo20} (which is cumbersome), we outline a direct solution which uses the RDV coordinates.

To begin with, note that 
\begin{align*}
  Z_{\omega, B} &= -\ch_2^B + \tfrac{\omega^2}{2}\ch_0^B + i \omega \ch_1^B \\
  &= -\ch_2^B + V_\omega \ch_0^B + i R_\omega (\Theta + (D_\omega + e)f)\ch_1^B \\
  &= -\ch_2 + R_B\Theta \ch_1 + R_B (D_B+e)f\ch_1 + (V_\omega - V_B)\ch_0 \\
  &\text{\hspace{3cm}} + iR_\omega (\Theta \ch_1 + (D_\omega + e)f\ch_1 - R_B (D_B + D_\omega + e)\ch_0).
\end{align*}
Now we set 
\begin{align}
  Z'_{\omega,B} &= \begin{pmatrix}  1 & -R_B/R_\omega \\
  0 & 1/R_\omega \end{pmatrix} Z_{\omega, B} \label{eq:ZpZtransform}\\
  &= -\ch_2 + L_{\omega, B}f\ch_1 + M_{\omega, B}\ch_0 + i (\Theta \ch_1 + (D_\omega + e)f\ch_1 + N_{\omega, B}\ch_0 ) \notag 
\end{align}
where the terms
\begin{align*}
  L_{\omega, B} &= R_B(D_B-D_\omega) \\
  M_{\omega, B} &= V_\omega - V_B + R_B^2(D_B + D_\omega +e) \\
  N_{\omega, B} &= -R_B (D_B + D_\omega + e)
 \end{align*}
 depend only on $\omega$ and $B$.

Solving \eqref{eq:cce-original-2} is now in turn equivalent to solving
 \begin{equation}\label{eq:cce-var3}
Z'_{\omega', B'}(\Phi (E)) = T Z'_{\omega, B}(E) \text{\quad for all $E \in D^b(X)$}.
 \end{equation}

For an object $E \in D^b(X)$, if we write 
\[
 \ch_0(E)=n, \,\,\,  f\ch_1(E)=d, \,\,\,  \Theta \ch_1(E)=c, \,\,\, \ch_2(E)=s,
\]
then from \cite[6.2.6]{FMNT} we have
\[
\ch_0(\Phi (E)=d, \,\,\, f\ch_1(\Phi E) = -n, \,\,\, \Theta \ch_1(\Phi E)=s-\tfrac{e}{2}d+en, \,\,\, \ch_2(\Phi E) = -c -ed + \tfrac{e}{2}n.
\]
Using this notation for Chern classes, we now have 
\[
Z'_{\omega, B}(E) = -s + L_{\omega, B} d + M_{\omega, B}n + i (c+ (D_\omega +e)d+N_{\omega, B} n)
\]
while
\[
Z'_{\omega', B'}(\Phi E) = -(\tfrac{e}{2}+L_{\omega',B'})n + (M_{\omega',B'}+e)d + c + i(-D_{\omega'}n + (N_{\omega',B'}-\tfrac{e}{2})d+s).
\]
It is then clear that if the following relations are satisfied
\begin{equation}\label{eq:LMN-relations}
-L_{\omega, B}=N_{\omega',B'}-\tfrac{e}{2}, \,\,\, M_{\omega,B}=D_{\omega'}, \,\,\, D_\omega + e = M_{\omega',B'}+e, \,\,\, N_{\omega, B} = -(\tfrac{e}{2}+L_{\omega',B'})
\end{equation}
and we take $T=-i=\begin{pmatrix} 0 & 1 \\ -1 & 0 \end{pmatrix}$, then the equation \eqref{eq:cce-var3} would hold.

The four relations in  \eqref{eq:LMN-relations}, when written out  in RDV coordinates, correspond to
\begin{align}
  R_B(D_B-D_\omega) &= R_{B'}(D_{B'}+D_{\omega'}+e)+\tfrac{e}{2} \label{eq:AG52-42-1}  \\
  V_\omega - V_B + R_B^2(D_B+D_\omega + e) &= D_{\omega'} \label{eq:AG52-42-2} \\
  D_\omega + e &= V_{\omega'}-V_{B'}+R_{B'}^2 (D_{B'}+D_{\omega'}+e)+e \label{eq:AG52-42-3} \\
  -R_B (D_B+D_\omega + e) &= -(\tfrac{e}{2}+R_{B'}(D_{B'}-D_{\omega'})) \label{eq:AG52-42-4} 
\end{align}
respectively.  Noting that for any divisor $W$ we have $V_W = \tfrac{1}{2}R_W^2(2D_W+e)$, relation \eqref{eq:AG52-42-2} gives  
\begin{equation}\label{eq:RDVafter-1}
    D_{\omega'}= V_\omega + R_B^2(D_\omega + \tfrac{e}{2}).
\end{equation}
Adding \eqref{eq:AG52-42-1} and \eqref{eq:AG52-42-4}  together  gives 
\begin{equation}\label{eq:RDVafter-2}
    R_{B'}=\frac{R_B(-2D_\omega -e)}{2D_{\omega'}+e}.
\end{equation}
Now \eqref{eq:AG52-42-3}  yields
\begin{equation}\label{eq:RDVafter-3}
  V_{\omega'}=D_\omega - \frac{R_B^2(2D_\omega + e)^2}{2(2D_{\omega'}+e)}.
\end{equation}
Also, subtracting \eqref{eq:AG52-42-4}  from \eqref{eq:AG52-42-1}  (and noting $e=2$) gives
\begin{equation}\label{eq:RDVafter-4}
    R_{B'}D_{B'}=R_BD_B+R_B-R_{B'}-1.
\end{equation}

Finally, we see that when we the RDV coordinates of $\omega, B, \omega', B'$ satisfy \eqref{eq:RDVafter-1}, \eqref{eq:RDVafter-2}, \eqref{eq:RDVafter-4}, and 
\begin{equation}\label{eq:RDVafter-3-changeofvar}
    V_{\omega'}=D_\omega - \frac{R_B^2(2D_\omega + e)^2}{2(2D_{\omega'}+e)}+1
\end{equation}
(instead of \eqref{eq:RDVafter-3}), and we choose 
\[
T=\begin{pmatrix} 1 & -R_{B'}/R_{\omega'} \\ 0 & 1/R_{\omega'} \end{pmatrix}^{-1}\begin{pmatrix} 0 & 1 \\ -1 & 0\end{pmatrix} \begin{pmatrix} 1 & -R_B/R_\omega \\ 0 & 1/R_\omega \end{pmatrix},
\]
the parameters $B, \omega, B', \omega'$ and $T$ together solve the central charge equation \eqref{eq:cce-var1-copy-2}.

The equations in \eqref{Equ:RDVNoTd_Td} are precisely \eqref{eq:RDVafter-1}, \eqref{eq:RDVafter-2}, \eqref{eq:RDVafter-4}, and \eqref{eq:RDVafter-3-changeofvar}.

\bibliography{refs}{}
\bibliographystyle{plain}

\end{document}